\DeclareMathOperator*{\OT}{OT}
\newcommand{\ROT}{\displaystyle{\operatorname{ROT}}}
\DeclareMathOperator*{\KL}{KL}
\newcommand{\DUAL}{\displaystyle{\operatorname{DUAL}}}
\DeclareMathOperator*{\argmax}{argmax}
\DeclareMathOperator*{\Var}{Var}
\newcommand{\RR}{\mathbb{R}}
\newcommand{\R}{\RR}
\newcommand{\Z}{\mathbb{Z}}
\newcommand{\PP}{\mathbb{P}}
\newcommand{\NN}{\mathbb{N}}
\newcommand{\eps}{ \varepsilon}
\newcommand{\Banach}{\mathcal{C}_\oplus}
\newcommand{\as}{\text{-a.s.}}
\theoremstyle{plain}
\newtheorem{theorem}{Theorem}[section]
\newtheorem{proposition}[theorem]{Proposition}
\newtheorem{lemma}[theorem]{Lemma}
\newtheorem{corollary}[theorem]{Corollary}
\theoremstyle{definition}
\newtheorem{definition}[theorem]{Definition}
\newtheorem{remark}[theorem]{Remark}
\newtheorem{example}[theorem]{Example}
\newtheorem{assumption}[theorem]{Assumption}
\theoremstyle{remark}
\begin{document}

\begin{frontmatter}
\title{Sparse Regularized Optimal Transport\\without Curse of Dimensionality}

\runtitle{Sparse Regularized Optimal Transport}
\runauthor{A.~Gonz\'alez-Sanz, S.~Eckstein, and M.~Nutz}
\begin{aug}

\author[A]{\fnms{Alberto} \snm{Gonz\'alez-Sanz}\ead[label=e1]{ag4855@columbia.edu}},
\author[B]{\fnms{Stephan} \snm{Eckstein}\ead[label=e2]{stephan.eckstein@uni-tuebingen.de}} \and
\author[C]{\fnms{Marcel} \snm{Nutz}\ead[label=e3]{mnutz@columbia.edu}}

\address[A]{Department of Statistics, Columbia University  \printead{e1}}
\address[B]{Department of Mathematics, University of Tübingen \printead{e2}}
\address[C]{Departments of Statistics and Mathematics, Columbia University \printead{e3}}
~ \\

\end{aug}

\begin{abstract}
Entropic optimal transport---the optimal transport problem regularized by KL diver\-gence---is highly successful in statistical applications. Thanks to the smoothness of the entropic coupling, its sample complexity avoids the curse of dimensionality suffered by unregularized optimal transport. The flip side of smoothness is overspreading: the entropic coupling always has full support, whereas the unregularized coupling that it approximates is usually sparse, even given by a map. Regularizing optimal transport by less-smooth $f$-divergences such as Tsallis divergence (i.e., $L^p$-regularization) is known to allow for sparse approximations, but is often thought to suffer from the curse of dimensionality as the couplings have limited differentiability and the dual is not strongly concave. We refute this conventional wisdom and show, for a broad family of divergences, that the key empirical quantities converge at the parametric rate, independently of the dimension. More precisely, we provide central limit theorems for the optimal cost, the optimal coupling, and the dual potentials induced by i.i.d.\ samples from the marginals. These results are obtained by a powerful yet elementary approach that is of broader interest for Z-estimation in function classes that are not Donsker. %
\end{abstract}
\begin{keyword}[class=MSC]
\kwd[Primary:]{ 62G05 } %
\kwd[Secondary: ]{62R10}\kwd{62G30} %
\end{keyword}
\begin{keyword}
\kwd{Optimal Transport}
\kwd{Regularization} \kwd{Sample Complexity}
\kwd{Central Limit Theorem}
\end{keyword}

\end{frontmatter}

\section{Introduction}

Optimal transport has become ubiquitous following computational advances enabling applications in statistics, machine learning, image processing, and other domains where distributions or data sets
need to be compared (e.g., \cite{PeyreCuturi.19, villani2008optimal}). Given probability measures~$P$ and~$Q$ on~$\mathbb{R}^d$, and a cost function $c:\mathbb{R}^d\times\mathbb{R}^d\to\mathbb{R}$, the Monge--Kantorovich optimal transport problem is
\begin{equation}\label{otIntro}
  \OT(P,Q)=\inf_{\pi\in \Pi(P,Q)} \int c(x,y) d\pi(x,y)
\end{equation} 
where $\Pi(P,Q)$ denotes the set of couplings; i.e., joint distributions $\pi$ on $\mathbb{R}^d\times\mathbb{R}^d$ with marginals $P$ and $Q$. A key bottleneck for statistical applications is that optimal transport suffers from the curse of dimensionality in terms of sample complexity (see \cite{Chewi_Niles-Weed_Rigollet,Dudley.69,Fournier.2014.PTRF} among many others). More specifically, let  \(X_1, \dots, X_n\) and \(Y_1, \dots, Y_n\) be i.i.d.\ samples from $P$ and $Q$, respectively, and consider their empirical measures $P_{n}=\frac{1}{n}\sum_{i=1}^{n}\delta_{X_i} $ and $Q_{n}=\frac{1}{n}\sum_{i=1}^{n}\delta_{Y_i}$. Then $\OT(P_n,Q_n)$, the value of~\eqref{otIntro} computed with marginals $(P_n,Q_n)$ instead of $(P,Q)$, converges to the population value $\OT(P,Q)$ at a rate that deteriorates exponentially with the dimension $d$. For instance, for the important cost $c(x,y)=\|x-y\|^2$ defining the 2-Wasserstein distance, $\mathbb{E}[|\OT(P_n,Q_n) - \OT(P,Q)|] \sim n^{-2/d}$ for dimension $d\geq 5$, under regularity conditions on the population marginals $P$ and $Q$ (see \cite{ManoleNilesWeed.24} for this particular result).

By contrast, the celebrated entropy-regularized optimal transport (EOT) avoids the curse of dimensionality. The EOT problem penalizes~\eqref{otIntro} with the Kullback--Leibler (KL) divergence $\KL(\pi|P\otimes Q)$ between the coupling $\pi$ and the product $P\otimes Q$ of the marginals,
\begin{equation}\label{eotIntro}
  {\rm EOT}_{\eps}(P,Q):=  \inf_{\pi\in \Pi( P ,   Q )} \int c d\pi + \eps \KL(\pi|P\otimes Q)
\end{equation} 
where 
$\eps>0$ is a fixed parameter determining the strength of regularization. Indeed, a series of works starting with \cite{genevay.2019.PMLR} has shown that ${\rm EOT}_{\eps}(P_n,Q_n)$ converges to ${\rm EOT}_{\eps}(P,Q)$ at the parametric rate $n^{-1/2}$, thus the dimension $d$ (and the parameter $\eps$) only affect the constants. Literature is reviewed in \cref{se:literature} below. The prevailing thinking is that EOT overcomes the curse of dimensionality thanks to its smoothness. Indeed, the entropic penalty leads to optimal couplings whose density is smooth (or at least as smooth as the cost $c$), and this regularity holds independently of the marginals. In particular, it holds uniformly over the empirical measures $(P_n,Q_n)$, and this fact drives the aforementioned result  of \cite{genevay.2019.PMLR}.

The flip side of smoothness is that the optimal coupling of EOT always has full support (i.e., the same support as $P\otimes Q$), for any value of the regularization parameter $\eps>0$. By contrast, the unregularized optimal transport coupling that it approximates, typically has sparse support (the graph of a function, namely the Monge map). This disconnect can be undesirable depending on
the application; for instance, the large support (or ``overspreading'') can amount to blurrier images in an image processing task as shown
in~\cite{blondel18quadratic}, bias in a manifold learning task as in~\cite{zhang.2023.manifoldlearningsparseregularised}, or unfaithful approximation of barycenters \cite{LiGenevayYurochkinSolomon.2020.NIPS}. In such cases, sparse approximations are desirable.

It is known that less-smooth $f$-divergence penalties give rise to sparse approximations. Indeed, consider the divergence-regularized optimal transport problem with the divergence $D_\varphi(\pi|P\otimes Q)$ defined by a function $\varphi$,
\begin{align}
  {\rm ROT}_{\eps}(P,Q)&:=  \inf_{\pi\in \Pi( P ,   Q )} \int c d\pi  
  +{\eps } D_\varphi(\pi|P\otimes Q), \label{rotIntro}\\
  D_\varphi(\pi|P\otimes Q)&:= \int \varphi\left( \frac{d \pi}{ d( P \otimes   Q )}  \right) d( P \otimes   Q ), \label{rotIntroDivergence}
\end{align} 
where $\varphi:\mathbb{R}_+\to\mathbb{R}$ is convex and satisfies certain conditions (see \cref{as:divergence}). While the KL divergence of EOT is recovered for $\varphi(t)=t\log t$, replacing this by a power $\varphi(t)=(p-1)^{-1}(t^p-1)$ for $p>1$ (Tsallis divergence, including the quadratic or $\chi^2$ divergence for $p=2$) was studied starting with \cite{Muzellec.2017.AAAI,blondel18quadratic,EssidSolomon.18} and empirically seen to have sparse solutions. Recent investigations underline this finding with theoretical results; see the literature review below. The main objection against such regularizations is that they lack the smoothness of EOT. Indeed, the density of the optimal coupling is only as smooth as the derivative of the convex conjugate $\psi(s)=\sup_{t\geq 0} \{ st-\varphi(t)\}$ of $\varphi$, which for $\varphi(t)=(p-1)^{-1}(t^p-1)$ is $k$-times differentiable only for $k< p/(p-1)$. In particular, it does not enjoy the $\mathcal{C}^\infty$-smoothness crucially used in EOT, where the proof of sample complexity uses derivatives of higher and higher order as the dimension $d$ grows. Following the aforementioned prevailing thinking, it is often assumed that~\eqref{rotIntro} thus suffers from the curse of dimensionality when it is not smooth. This idea is in line with the recent work~\cite{BayraktarEckstein.2025.BJ} which gives an upper bound for the sample complexity that deteriorates exponentially with the dimension~$d$ (see \cref{se:literature}).

In the present paper, our goal is to refute this prevailing thinking. For a broad class of regularizations $\varphi$, we show that~\eqref{rotIntro} overcomes the curse of dimensionality and converges at the parametric rate, despite a lack of smoothness (and also of a PL inequality, cf.\ \cref{se:literature}). In fact, our results are much more detailed: we establish central limit theorems  for all key objects. As a consequence, \eqref{rotIntro} provides approximate solutions to optimal transport with precise statistical guarantees.

\paragraph*{Synopsis of main results} %
We consider marginals $P,Q$ with bounded supports, at least one of them connected. The transport cost $c$ is a general $\mathcal{C}^1$ function. The main assumption on the divergence is that the convex conjugate~$\psi$ of $\varphi$ is $\mathcal{C}^2$. This includes the KL divergence where $\psi$ is $\mathcal{C}^\infty$, but also Tsallis divergences that yield sparse approximations of unregularized optimal transport. There are three main objects under consideration. The first is the optimal cost $\ROT_\eps(P_n,Q_n)$; here the result is simple to state as the object is scalar.  In \cref{th:CLT.cost}, we show that the optimal cost is asymptotically normal at rate $\sqrt{n}$. More precisely, $ \sqrt{n}\big( \ROT(P_n,Q_n) -\ROT(P,Q) \big)\to N(0,\sigma^2)$ where $N(0,\sigma^2)$ is the centered normal distribution with a variance $\sigma^2$ detailed in \cref{th:CLT.cost}. The second key object is the optimal coupling $\pi_n\in\Pi(P_n,Q_n)$. Here the asymptotic normality can be stated by integrating a test function: \cref{th:CLT.plans} shows that for any bounded measurable function~$\eta$, we have $ \sqrt{n}\left(\int \eta d(\pi_n - \pi) \right) \to  N(0, \sigma^2(\eta))$, where again the variance is detailed in the theorem. The third object is the pair of dual potentials; that is, functions $(f_*,g_*)$ solving the dual problem of~\eqref{rotIntro} in the sense of convex analysis,
\begin{align}
    \label{dualIntro}
     \sup_{(f,g)\in L^\infty(  P )\times L^\infty(  Q )} \int\left\{ f(x) + g(y) - \eps\cdot\psi\left(\frac{f(x)+g(y)-c(x,y)}{\eps} \right) \right\}d(  P \otimes   Q )(x,y).
\end{align}
The potentials determine the optimal coupling via
$$
d\pi =\psi'\left(\frac{f_*(x)+g_*(y)-c(x,y)}{\eps}\right) d(P\otimes Q)
$$
where $\psi'$ is the derivative of the conjugate of $\varphi$. Unlike in the special case of EOT, $\psi'$ is generally not invertible, and hence the potentials are the main object in our study. Denoting by $(f_*,g_*)$ the population potentials and by $(f_n,g_n)$ the empirical counterparts, our  central limit theorem for the potentials states that 
$\sqrt{n} (f_n-f_*,g_n-g_*)$ converges (weakly wrt.\ the uniform norm) to the Gaussian random process detailed in \cref{th:CLT.pot}. Of course this result can only hold if $(f_*,g_*)$ is unique. Indeed, we obtain uniqueness (up to additive constants) as a by-product of our analysis; this is a new result of its own interest and holds even for merely continuous transport costs~$c$ (\cref{th:uniqueness}). 

\subsection{Related literature}\label{se:literature}

Next, we review literature on optimal transport regularized by an $f$-divergence. Following the main focus of the present work, we begin with divergences other than the entropic one. After that, we review the existing sample complexity results for EOT.
\subsubsection{Non-entropic divergences} Optimal transport problems regularized by Tsallis divergence were first considered in the discrete setting, starting with \cite{Muzellec.2017.AAAI} for applications in ecological inference. Focusing on the particular case of quadratic (i.e., $\chi^2$) divergence, \cite{blondel18quadratic} highlighted the sparsity of the solution for use in image processing and \cite{EssidSolomon.18} studied minimum-cost flow problems on graphs. The earlier work~\cite{DesseinPapadakisRouas.18} considered optimal transport with a more general convex regularization. In the continuous setting, divergence-regularized optimal transport was first explored in the computational literature. Several works including \cite{EcksteinKupper.21, genevay.2019.PMLR, GulrajaniAhmedArjovskyDumoulinCourville.17, LiGenevayYurochkinSolomon.2020.NIPS, seguy2018large} approach the dual problem by optimization techniques. For instance, \cite{LiGenevayYurochkinSolomon.2020.NIPS} computes regularized Wasserstein barycenters using neural networks. From a computational point of view, Tsallis divergences are attractive to mitigate a well-known issue of EOT for small values of the regularization parameter $\eps$, namely that optimization methods struggle with the exponentially small values of the density (e.g., \cite{Schmitzer.19}).

On the theoretical side, \cite{LorenzMannsMeyer.21} established duality results for the case of quadratic divergence. Also for quadratic divergence, \cite{LorenzMahler.22} showed convergence $\ROT_{\eps} \to \OT$ as the regularization parameter $\eps$ tends to zero. More recently, \cite{EcksteinNutz.22} derives a rate for this convergence, for general $f$-divergences. For the special case of quadratic divergence, \cite{GarrizmolinaElAl.2024} shows that this rate corresponds to the  exact leading order and identifies the multiplicative constant, whereas \cite{GonzalezSanzNutz.2024.Quantita} focuses on the discrete case where convergence is stationary. The recent works \cite{GonzalezSanzNutz2024.Sacalar,WieselXu.24} study the size of the optimal support in the regime $\eps\to0$, thus quantifying the sparsity of the solution (qualitatively proved in~\cite{Nutz.24} but empirically known long before, as mentioned above).

The only previous work on sample complexity for non-entropic divergences is \cite{BayraktarEckstein.2025.BJ}. The authors bound the expected absolute difference between the population and empirical optimal costs. The bound depends on the smoothness and the dimension of the problem. Recall that $\psi$ denotes the conjugate of the convex function $\varphi$ defining the divergence in~\eqref{rotIntroDivergence}. Let $k\in\NN$ be such that 
$\psi\in\mathcal{C}^k$ as well as $c\in\mathcal{C}^k$. Moreover, let $d$ be the dimension of the marginals. %
The main result \cite[Theorem 5.3]{BayraktarEckstein.2025.BJ} states that for $d>2k$,
\begin{equation}\label{eq:looseBound}
  \mathbb{E}[|\ROT_\eps(P,Q)-\ROT_\eps(P_n,Q_n)|]\lesssim n^{-\frac{k}{d}}.
\end{equation}
For most applications, the bottleneck in this bound is the smoothness of $\psi$, rather than~$c$. Assuming merely $\psi\in\mathcal{C}^2$ as in the present work, the rate~\eqref{eq:looseBound} is  $n^{-\frac{2}{d}}$ for $d\geq5$, suggesting that the curse of dimensionality is exactly \emph{the same as for the unregularized} optimal transport problem mentioned at the beginning of this introduction. The bound~\eqref{eq:looseBound} is obtained by adapting the classical approach going back to \cite{genevay.2019.PMLR} for EOT: estimating the regularity of the empirical potentials to define a function space with controlled covering number and then applying empirical process theory. Because the potentials are only as smooth as~$\psi'$, this yields a bound~\eqref{eq:looseBound} that in general deteriorates exponentially with the dimension (whereas when $\psi\in\mathcal{C}^\infty$ as for EOT, the dimension can be eliminated).

The present work indicates that the bound~\eqref{eq:looseBound} is loose, even to the extent that its implicit message regarding the curse of dimensionality is misleading. Indeed, we show that the actual rate is $n^{-\frac{1}{2}}$, \emph{independently of the dimension $d$,} and in particular that the influence of the dimension is more similar to EOT than to unregularized optimal transport. One may consider this result highly surprising given the lack of smoothness (and failure of strong concavity, cf.\ \cref{se:introEntropic}). As we derive central limit theorems with the usual $\sqrt{n}$ scaling for all key quantities, this rate is sharp. (Except possibly in degenerate examples where the Gaussian limit has vanishing variance; then, the actual rate is likely faster.)

\subsubsection{Entropic divergence}\label{se:introEntropic} The literature on EOT is extremely large, and fairly well known. We only review the literature on sample complexity, focusing on continuous population marginals (and with a view towards explaining why the existing approaches fail in our setting). %
As mentioned above, parametric rates for the optimal EOT cost~\eqref{eotIntro} were first obtained by \cite{genevay.2019.PMLR}. Assuming a smooth (i.e, $\mathcal{C}^\infty$) transport cost $c$ and compactly supported marginals, the authors show that the potentials are smooth with $\mathcal{C}^k$ norm bounded independently of the marginal measures, for any $k\in\NN$. Thus the empirical potentials all belong to a function class with controlled covering number. The result then follows by applying empirical process theory to the dual problem of EOT. A similar bound with improved constants and more general (sub-Gaussian) marginals was obtained in \cite{MenaWeed.2019.Nips} using a refinement of the same approach. Moreover, \cite{MenaWeed.2019.Nips} provided the first central limit-type theorem on EOT, namely for the optimal cost. In this result, the centering is by the mean of the empirical cost, instead of the population cost as in a usual central limit theorem. The proof follows an approach based on the Efron--Stein inequality that is adapted from (unregularized) optimal transport; cf.~\cite{delBarrioLoubes.2019.AOP}. A central limit theorem for the EOT cost in the classical sense (centered at the population value) was first derived in \cite{delBarrioEtAl.2023.SIMODS}. The proof combines the result of \cite{MenaWeed.2019.Nips} with an analysis of the bias--variance decomposition showing that the bias tends to zero faster than $n^{-1/2}$. This is based on the fact that the empirical EOT potentials belong to a Donsker class. Namely, the $\mathcal{C}^k$-norm of the potentials is uniformly bounded for any $k$, and choosing $k>d/2$ implies the Donsker property~\cite[p.\,157]{vanderVaart.1996}. The authors also show that the potentials converge at the parametric rate (in the norm of $\mathcal{C}^{k}$, for any $k\in \NN$); this is based on the strong convexity of the dual problem (see \cite[Lemma~4.6]{delBarrioEtAl.2023.SIMODS}). In concurrent work, \cite{goldfeld.2024.statisticalinferenceregularizedoptimal} provided a very similar central limit theorem for the EOT cost, using a different proof technique based on the functional delta method for supremum-type functionals (see \cite{Carcamo.Cuevas.Rodrigez.2020.BJ}). This argument again rests on the potentials belonging to a Donsker class. 

While the aforementioned central limit theorems were all about the optimal cost, central limit theorems for the potentials and couplings were first derived in  \cite{GonzalezSanz.2024.weaklimits} and \cite{Goldfeld.2024.EJS}. Both works use the same methodology of Z-estimation and the delta method (see also \cref{se:methodology} below). Specifically, \cite[Theorem 3]{Goldfeld.2024.EJS} shows that the potentials are Hadamard differentiable as functions of the marginal measures, tangentially to perturbations in $\mathcal{C}^k(\Omega)'$ for any $k\in \NN$. As the unit ball in  $\mathcal{C}^k(\Omega)$ is a Donsker class for $k>d/2$, the delta method then yields the central limit theorem for the potentials, and the theorems for the couplings are derived from there.

The aforementioned works all deal with a transport cost function $c\in\mathcal{C}^\infty$, and exploit the fact that the EOT potentials are as smooth as that cost. A substantially different approach was taken in \cite{rigollet2022samplecomplexityentropicoptimal}. Assuming only that~$c$ is bounded, the approach is based on the fact that the dual problem of EOT is strongly concave, and hence satisfies a PL inequality, uniformly over the empirical marginals (see \cite[Lemma~16]{rigollet2022samplecomplexityentropicoptimal}). The authors then obtain the parametric rate for the convergence of the empirical EOT cost; more precisely, the mean squared error and bias are both bounded by $1/n$, with constants that are fully dimension-independent.
In the setting of non-smooth cost, central limit theorems for the optimal costs and the optimal couplings were established in \cite{GonzalezSanz.2023.Beyond}. The authors linearize the potentials in the empirical $L^2$ norms and, unlike \cite{Goldfeld.2024.EJS,GonzalezSanz.2024.weaklimits}, do not use empirical processes theory %
but instead approximate the optimal couplings by infinite order U-statistics. This approximation holds by a uniform contraction argument over the linearized Schrödinger system, which is deeply related to the PL inequality in \cite{rigollet2022samplecomplexityentropicoptimal}.

All of the above approaches fail in our setting because the empirical potentials are not smooth (and do not belong to a Donsker class) while the dual problem is not strongly concave and does not satisfy a uniform PL inequality.

While our main interest is in divergences other than the entropic one, let us mention that we add to the literature even in the special case of EOT. Namely, we provide a central limit theorem for the potentials under the assumption that the cost $c$ is $\mathcal{C}^1$, where such a result previously existed only for $c\in\mathcal{C}^\infty$ (see \cite{Goldfeld.2024.EJS,GonzalezSanz.2024.weaklimits}).  Our proof technique is substantially different, as we explain next.

\subsection{Methodology}\label{se:methodology}

We follow the approach of Z-estimation in deriving central limit theorems; see \cite[Chapter~3.3]{vanderVaart.1996}. While the usual argument via Donsker classes is doomed to fail due to the missing regularity of the empirical potentials, one key methodological innovation is to overcome this issue with a novel line of argument. As our approach may be useful for Z-estimation problems in other areas, we sketch the approach in general terms.

In Z-estimation, the empirical quantities $\theta_n$ of interest are described by an equation of the form $\Phi_n(\theta_n)=0$ with a random operator $\Phi_n$ while the population counterpart $\theta_*$ is described by $\Phi(\theta_*)=0$ with a deterministic $\Phi$.  The goal is to show a central limit theorem for the convergence $\theta_n\to\theta_*$. More specifically, $\theta_n,\theta_*$ are elements of the parameter set $\Theta$ which is contained in a Banach space $(\mathcal{B}, \|\cdot\|)$. We assume for simplicity that $\Phi_n,\Phi: \mathcal{B}\to\mathcal{B}$; in general the image may be in another Banach space. In our particular problem, $\theta_n$ are the empirical potentials $(f_n,g_n)$ and $\theta_*$ is the population counterpart $(f_*,g_*)$. The nonlinear operators $\Phi_n,\Phi$ represent the first-order conditions of optimality in the dual problem. As in many applications of Z-estimation, these operators have the integral form
\begin{align}\label{eq:integralFunctionals}
\Phi_n(\theta)= \int \phi(\theta) d \mathbb{P}_n, \qquad  \Phi(\theta)= \int \phi(\theta) d \mathbb{P} 
\end{align}
where $\phi $ is deterministic and $\mathbb{P}_n$ are empirical measures derived from i.i.d.\ samples of $\mathbb{P}$.

The basic theorem of Z-estimation (see \cite[Theorem~3.3.1]{vanderVaart.1996}) separates the conditions for the desired asymptotic normality of $\sqrt{n}(\theta_n-\theta_*)$ into analytical conditions on the population quantities $\theta_*,\Phi$ and a stochastic  condition stating that the remainder in a Taylor expansion is negligible. Roughly, these conditions are 

\begin{enumerate}
    \item $[\Phi_n-\Phi](\theta_*)$ satisfies a central limit theorem in $\mathcal{B}$,
    \item $\Phi$ is Fr\'{e}chet differentiable at $\theta_*$ with invertible derivative $\mathbb{L}:=D\Phi(\theta_*)\in\mathcal{L}(\mathcal{B},\mathcal{B})$,
    \item $\|\theta_n-\theta_*\| \xrightarrow{\PP} 0$,
    \item 
    the following expansion holds,
\begin{equation}\label{eq:ZestimationTaylor}
\Delta_n :=[\Phi_n-\Phi](\theta_n) - [\Phi_n-\Phi](\theta_*) = o_\mathbb{P}\big(n^{-1/2}+\|\theta_n-\theta_*\|\big).
\end{equation}
\end{enumerate}
While obtaining invertibility has its own challenges in our setting, we defer that discussion to \cref{se:proof.strategy}. Like in most applications, the main issue is to establish~\eqref{eq:ZestimationTaylor}. The standard approach (see \cite[Lemma~3.3.5]{vanderVaart.1996}) is to use a sufficient condition whose main part is that the random functions $\phi(\theta_n)-\phi(\theta_*)$ form a $\mathbb{P}$-Donsker  class (for $\|\theta_n-\theta_*\|$ sufficiently small). In the setting of EOT, the smoothness of the empirical potentials indeed yields the Donsker property, and together with the aforementioned Hadamard differentiability, this allowed \cite{Goldfeld.2024.EJS,GonzalezSanz.2024.weaklimits} to infer the desired central limit theorem for the potentials. In the present case, this approach is a nonstarter because the regularity of the potentials is too poor to give a Donsker class. 

We take a different route. %
While our aim is, as before, a central limit theorem in $\mathcal{B}$, we also use an auxiliary Banach space $\mathcal{B}_s\subset \mathcal{B}$ equipped with a stronger norm $\|\cdot\|_s$ such that the unit ball of $\mathcal{B}_s$ is compactly embedded in $\mathcal{B}$. In our case, $\mathcal{B}$ is (up to some details) the space of continuous functions on a compact set endowed with the uniform norm whereas $\mathcal{B}_s$ is (up to an isomorphism) the space of H\"{o}lder continuous functions with a fixed H\"{o}lder exponent $\beta\in (0,1)$. The stronger H\"{o}lder norm guarantees the compact embedding. While we keep the conditions (ii), (iii) above, we strengthen (i) to 
\begin{enumerate}
    \item[(i')] $[\Phi_n-\Phi](\theta_*)$ satisfies a central limit theorem in $\mathcal{B}_s$
\end{enumerate}
as the stronger topology will be used to help establish~\eqref{eq:ZestimationTaylor}. We verify (i') by applying a general central limit theorem for H{\"o}lder spaces detailed in~\cref{se:HolderCLT}. The key intermediate step towards~\eqref{eq:ZestimationTaylor} is that
\begin{enumerate}
    \item[(iv')] there are a compact $\mathcal{K}\subset \mathcal{B}$, random elements $U_n\in\mathcal{K}$, and random elements $V_n\in\mathcal{B}$ with  $\|V_n\|\xrightarrow{{\mathbb{P}}} 0$,  such that 
    $$ \Delta_n :=[\Phi_n-\Phi](\theta_n) - [\Phi_n-\Phi](\theta_*)  =(U_n +V_n)\| \theta_n-\theta_*\|.$$
\end{enumerate}
This is verified using the specifics of the problem at hand (cf.\ \cref{lemma:develomentIncompact}), without requiring smoothness. Next, we outline how these conditions lead to~\eqref{eq:ZestimationTaylor}. Note that the differentiability condition (ii) implies 
$$ (\theta_*-\theta_n)= \mathbb{L}^{-1}(\Phi (\theta_*)-\Phi(\theta_n)) + R_n $$
with $\|R_n\|=o_{\mathbb{P}}(\|\theta_*-\theta_n\|)$. Using $\Phi (\theta_*)=0=\Phi_n (\theta_n)$, this can be written as
\begin{align*}
    \theta_*-\theta_n&= \mathbb{L}^{-1}(\Phi_n (\theta_n)-\Phi(\theta_n)) + R_n
    =\mathbb{L}^{-1}\big(\Delta_n + [\Phi_n-\Phi] (\theta_*) \big)+ R_n.
\end{align*}
By (i'), the sequence $\sqrt{n}[\Phi_n-\Phi](\theta_*)$ is tight in $\mathcal{B}_s$. Using also (iv') and the continuity of $\mathbb{L}^{-1}$, we deduce a key decomposition (cf.\ \cref{pr:three.terms})
\begin{align}\label{eq:threeTermsIntro}
    \theta_*-\theta_n= \| \theta_n-\theta_*\| (U_n'+V_n')+ n^{-\frac{1}{2}}{W}_n
\end{align}
with random functions $U'_n$ taking values in a compact $\mathcal{K}'\subset\mathcal{B}$, random functions $\|V_n'\|\xrightarrow{\mathbb{P}} 0$, and $(\mathbb{L}W_n)$ tight in $\mathcal{B}_s$. From here, we make (mild) use of the particular properties the functionals~\eqref{eq:integralFunctionals} at hand. Namely, $\phi$ is differentiable, hence the  fundamental theorem of calculus allows us to write
\begin{align*}
  \Delta_n &=\int   (\phi(\theta)- \phi(\theta_n))d (\mathbb{P}- \mathbb{P}_n)
  =\int   (\theta_*-\theta_n) \phi_n  d(\mathbb{P}- \mathbb{P}_n)
\end{align*}
for $\phi_n:=\int_0^1 \phi'(\lambda \theta_*+ (1-\lambda)\theta_n )d \lambda$. Inserting~\eqref{eq:threeTermsIntro} yields
\begin{align*}
  \Delta_n &=\| \theta_*-\theta_n\| \int (U_n'+ V_n')   \phi_n  d(\mathbb{P}- \mathbb{P}_n)+ n^{-\frac{1}{2}}\int W_n   \phi_n  d(\mathbb{P}- \mathbb{P}_n).
\end{align*}
The stated properties of $U_n', V_n'$ enable us to verify that first term is $o_\mathbb{P}\big(\|\theta_n-\theta_*\|\big)$. Similarly, the tightness of $(\mathbb{L}W_n)$ in $\mathcal{B}_s$ and the fact that $\|\cdot\|_s$-bounded sets are relatively compact in $\mathcal{B}$ allow us to verify that the second term is $o_\mathbb{P}\big(n^{-1/2}\big)$. This completes the derivation of~\eqref{eq:ZestimationTaylor}. The central limit theorem for the potentials then follows by standard arguments. The proofs of the central limit theorems for the optimal cost and the optimal couplings are based on the result for the potentials, \eqref{eq:ZestimationTaylor} and the central limit theorem for two-sample $U$-statistics.

Finally, we comment on the uniqueness of the population potentials, shown in  \cref{th:uniqueness} when the cost $c$ is continuous and one marginal has connected support. The usual technique to obtain uniqueness---familiar in unregularized optimal transport and adapted in \cite{Nutz.24} for the regularized problem with quadratic divergence---is to argue that the gradient of the potential equals the partial derivative of the cost $c$ on the support of the optimal coupling. 
This argument is a nonstarter if the cost $c$ is not differentiable. In the present work, uniqueness is instead deduced from the invertibility of $\mathbb{L}$. Clearly uniqueness of the population potentials is a precondition for any result on the convergence to them. On the other hand, we mention that the empirical potentials are in general not unique. They are unique in the special case of EOT and more generally when the optimal coupling has sufficiently large support, but not in general. The family of all potentials can be parametrized by the components (in the sense of connectedness of graphs) of the support, as shown in~\cite{Nutz.24}. 

\paragraph*{Organization} The remainder of this paper is organized as follows. \Cref{se:prelims} details the problem formulation and notation, and summarizes basic facts about the regularized optimal transport problem. \Cref{se:mainRes} first states the main results, namely the uniqueness of the population potentials and the three central limit theorems, then continues with an overview of the proof strategy. The proofs start in \cref{se:linearization} by analyzing the linearization of the first-order condition of the potentials, here the key result is the invertibility of the derivative~$\mathbb{L}$ (\cref{pr:Invertible}). The proofs of the central limit theorems are presented in \cref{se:proofs.of.CLTs}. We first show in \cref{se:consistency} that the empirical potentials are a consistent estimator. \Cref{se:key.decomposition} contains the technical core of the proof, namely the aforementioned decomposition into three terms, each with a different compactness property. \Cref{se:proof.CLT.end} concludes with the proofs of the central limit theorems. In  \cref{sec:simulations} we provide a numerical analysis of a particular example where the population problem can be solved in closed form and hence the convergence rate can be accurately observed. \Cref{se:HolderCLT} provides an abstract central limit theorem for the Banach space of H\"{o}lder functions that is used in the proof of our main results. Omitted proofs are collected in \cref{se:proofsAppendix}.

\pagebreak[3]

\section{Problem statement and preliminaries}\label{se:prelims}

\subsection{Notation}\label{Notation}

Let $\Omega$ be a compact subset of $\mathbb{R}^d$. For $f: \Omega \to \mathbb{R}$, we write 
$\|f\|_\infty = \sup_{x \in \Omega} |f(x)|$. 
The space of continuous functions $\Omega\to\mathbb{R}$ is denoted $\mathcal{C}(\Omega)$ and endowed with $\|\cdot\|_\infty$. Moreover, $\mathcal{C}^k(\Omega)$ denotes $k$-times continuously differentiable functions. For $\alpha \in (0, 1]$, the $\alpha$-H\"{o}lder semi-norm of $f: \Omega \to \mathbb{R}$ is 
\[
[f]_{\alpha} = \sup_{x \neq y} \frac{|f(x) - f(y)|}{\|x - y\|^\alpha}.
\]  
We set $\mathcal{C}^{0,\alpha}(\Omega)=\{f\in\mathcal{C}(\Omega):[f]_{\alpha}<\infty\}$, a Banach space under the norm  
$
\|f\|_{0,\alpha} := \|f\|_\infty + [f]_\alpha.
$  
If $\mathcal{B}_1$ and $ \mathcal{B}_2$ are Banach spaces, the space of bounded linear operators $F: \mathcal{B}_1\to \mathcal{B}_2$ is denoted $\mathcal{L}(\mathcal{B}_1,\mathcal{B}_2)$ and endowed with the norm topology. 

The open unit ball in $\mathbb{R}^d$ is denoted by $\mathbb{B}$  and its closure by $\overline{\mathbb{B}}$. Thus $x + r \, \mathbb{B}$ is the ball centered at~$x$ with radius~$r$. The gradient of $f:\R^d\to \R$ is denoted $\nabla f$ and the gradients of a function $c:\R^d\times \R^d\to \R$ with respect to  the first and second coordinates are denoted $\nabla_x c$ and $\nabla_y c$, respectively. The derivative of a univariate function  $\psi:\R\to \R$ is denoted $\psi'$. Given functions $x\mapsto f(x)$ and $y\mapsto g(y)$, we denote by $f\oplus g$ the function $(x,y)\mapsto f(x)+ g(y)$.

We fix a probability space $(\boldsymbol{\Omega}, \mathcal{A}, \mathbb{P})$ where all random variables are defined. For a measurable function $f: \mathbb{R}^d \to \mathbb{R}$ and a random vector $X: \boldsymbol{\Omega} \to \mathbb{R}^d$ with distribution $P$, the expectation of $f(X)$ is denoted  
$
\mathbb{E}[f(X)] = \int f(x) \, dP(x) = \int f \, dP.
$  
Almost-sure convergence is denoted by $\xrightarrow{\text{a.s.}}$, convergence in probability by $\xrightarrow{\mathbb{P}}$, and convergence in distribution (or weak convergence) by $\xrightarrow{w}$. The latter refers to convergence induced by continuous and bounded test functions. For scalar random variables $X_n,Y_n$, we write 
$X_n=\mathcal{O}_{\mathbb{P}}(Y_n)$ if $X_n/Y_n$ is stochastically bounded and 
$X_n = o_{\mathbb{P}}(Y_n)$ if $X_n/Y_n\xrightarrow{\mathbb{P}}0$.

More terminology, related to probability in Banach spaces, can be found in \cref{se:HolderCLT}. 

\subsection{Divergence}

Given measures $\mu,\nu$ on the same space, the divergence $D_\varphi(\mu|\nu)$ is determined by the function $ \varphi:[0, \infty)\to \R$ via
$$
   D_\varphi(\mu|\nu) = \int \varphi\left(\frac{d\mu}{d\nu}\right)d\mu,
$$
with the convention that $D_\varphi(\mu|\nu)=\infty$ if $\mu\not\ll \nu$. 
The following assumption on $\varphi$ is in force throughout the paper.

\begin{assumption}[Divergence]\label{as:divergence} The function $ \varphi:[0, \infty)\to \R$ is strictly convex with $\varphi(1) = 0 $,  $ \lim_{x\to \infty}\varphi(x)/x=+\infty $ 
and such that the conjugate $$y\mapsto \psi(y):=\varphi^*(y):= \sup_{x\geq 0} \{ xy-\varphi(x)\} $$ is in $\mathcal{C}^{2}(\R )$. Moreover, there exists $C > 0$ such
that $\psi'(x) \geq x$ for $x\geq C$, and there exist $t_0>0$ and $ \delta>0$ such that $\psi'(t_0)=1$ and $\psi$ is strictly convex on $ [t_0-\delta, \infty)$.
\end{assumption}

The detailed assumptions about the shape of $\psi$ are useful to derive basic estimates for the potentials; cf.\ \cref{pr:ROTprelims}. While they are a bit clumsy, they are verified in all examples of our interest. The more restrictive assumption is that $\psi$ is $\mathcal{C}^{2}$. 

\begin{example}\label{ex:divergences}

For the Kullback--Leibler divergence of EOT, we take $\varphi(x)=x\log(x)$ which yields $\psi(y)=e^{y-1}$. Note that $\psi\in\mathcal{C}^{\infty}$ and $\psi'(y)>0$ for all $y$, corresponding to the fact that the optimal coupling always  has full support. Next, consider the polynomial (Tsallis) divergence 
$$\varphi(x)=
\frac{x^\alpha-1}{\alpha}$$ for $\alpha\in(1,\infty)$. Then $\psi'(y)=y_+^{\beta-1}$ where $\frac{1}{\alpha} + \frac{1}{\beta}=1$ and $y_+=\max\{y,0\}$. This truncation at zero corresponds to the fact that the optimal coupling does not have full support in general. Note that $\psi$ is strictly convex on  $\R_+$ and in $\mathcal{C}^1$. Moreover,  $\psi$ is $\mathcal{C}^2$ for $\beta>2$; i.e., for any $\alpha\in (1,2)$. 

We note that the quadratic case $\alpha=2$ is not covered. For this boundary case, we expect to provide similar results as in the present paper in separate work, assuming that $c$ is the quadratic cost and the marginals have additional regularity (see also \cref{sec:simulations} for numerical hints). That setting implies additional convexity properties for the potentials. In the present work, we focus on giving a general result for a broad class of divergences, costs and marginals, thus showing that the conclusions do not hinge on a particular algebraic structure.
\end{example}

\subsection{Regularized optimal transport}

For brevity, we treat the regularized optimal transport problem for regularization parameter $\eps=1$. The general case is recovered by a simple scaling argument detailed in \cref{rk:general.regularization.parameter} below. Thus the primal problem is 
\begin{equation}\label{eq:primal1}
  \ROT(P,Q):=  \inf_{\pi\in \Pi( P,Q)} \int c d\pi  +  \int \varphi\left( \frac{d \pi}{ d( P \otimes   Q )}  \right) d( P \otimes   Q ).
\end{equation} 
The associated dual problem is 
\begin{equation}\label{eq:dual1}
  \DUAL(P,Q):=  \sup_{(f,g)\in  L^\infty(P) \times L^\infty(Q)} \, \int \big(f\oplus g - \psi(f\oplus g - c)\big) \,d(P\otimes Q)
\end{equation} 
and its first-order condition for optimality is
\begin{equation}
        \label{eq:FOC.as}
        \begin{cases}
             \int  \psi'(f_*(\cdot)+ g_*(y)-c(\cdot,y)) dQ(y)=1 \quad P\as,\\
             \int \psi'(f_*(x)+ g_*(\cdot)-c(x,\cdot)) dP(x)=1  \quad Q\as
        \end{cases}
\end{equation}

The following proposition summarizes some basic facts to be used throughout the paper. While the proofs are essentially known and largely contained in \cite{BayraktarEckstein.2025.BJ}, we give a self-contained and simpler proof in \cref{se:proofsPrelims}, for the convenience of the reader and also to rectify minor inaccuracies in~\cite{BayraktarEckstein.2025.BJ}. 

\begin{proposition}\label{pr:ROTprelims}
    Let $P,Q$ be probability measures on $\mathbb{R}^d$ with compact supports $\Omega,\Omega'$. Moreover, let $c\in\mathcal{C}(\Omega\times\Omega')$.
    \begin{enumerate}
    \item
    The strong duality 
      $\ROT(P,Q)=\DUAL(P,Q)$
    holds. 
    \item
    The primal problem~\eqref{eq:primal1} admits a unique optimizer $\pi\in \Pi( P,Q)$.
    \item
    The dual problem~\eqref{eq:dual1} admits a (non-unique) optimizer $(f_*,g_*)\in L^\infty(P)\times L^\infty(Q)$.
    \item
    A pair $(f_*,g_*)\in L^\infty(P)\times L^\infty(Q)$ is an optimizer of the dual problem~\eqref{eq:dual1} if and only if it satisfies the first-order condition~\eqref{eq:FOC.as}.
    \item
    Any dual optimizer $(f_*, g_*)$ yields a primal optimizer via
    $
    d\pi := \psi'(f_* \oplus g_* - c) d(P \otimes Q).    
    $
    \item Let $(f_*,g_*)\in L^\infty(P)\times L^\infty(Q)$ satisfy~\eqref{eq:FOC.as}. Then one can choose a version of $(f_*, g_*)$ satisfying \eqref{eq:FOC.as} everywhere; that is,
    \begin{equation}
        \label{eq:FOC.pw}
        \begin{cases}
             \int  \psi'(f_*(x)+ g_*(y)-c(x,y)) dQ(y)=1\quad \text{for all }x\in\Omega,\\
             \int \psi'(f_*(x)+ g_*(y)-c(x,y)) dP(x)=1\quad \text{for all }y\in\Omega'.
        \end{cases}
    \end{equation}
    For such a version $(f_*, g_*)$, the uniform bound $\|f_*\oplus g_* \|_\infty \leq 5 \|c\|_\infty + (\psi')^{-1}(1) < \infty$ holds.
    Moreover, if $\rho$ is a modulus of uniform continuity for~$c$ on $\Omega\times\Omega'$, then $f_*$ and $g_*$ are also uniformly continuous with modulus $\rho$.

    \item Let $(f_*, g_*)$ solve \eqref{eq:FOC.pw} and abbreviate $\xi_*(x,y):=f_*(x)+ g_*(y)-c(x,y)$. 
    Then there exists $\delta > 0$ such that
     \begin{align*}
        \int \psi''(\xi_*(\cdot,y))  dQ(y)\geq \delta \quad\mbox{on }\Omega, \qquad
        \int \psi''(\xi_*(x,\cdot))  dP(x)\geq \delta\quad\mbox{on }\Omega'.
     \end{align*}
\end{enumerate}
\end{proposition}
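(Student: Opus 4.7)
The plan is to treat the seven parts in a natural order, several building on earlier ones. Parts (i) and (ii) follow from standard convex duality: the set $\Pi(P,Q)$ is weakly compact by Prokhorov; the cost term is weakly continuous since $c$ is continuous on the compact $\Omega\times\Omega'$; and $D_\varphi(\cdot\,|\,P\otimes Q)$ is weakly lower semicontinuous via its Fenchel representation $\sup_{h}\bigl\{\int h\,d\pi-\int\psi(h)\,d(P\otimes Q)\bigr\}$. Existence of a primal optimizer follows by direct minimization, and strict convexity of $\varphi$ gives uniqueness. Strong duality (i) is obtained from Fenchel--Rockafellar applied to the marginal constraints; the qualification is trivial since $P\otimes Q\in\Pi(P,Q)$ has finite objective. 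Part (iv) is a direct G\^ateaux computation: the dual objective is concave and G\^ateaux differentiable in $(f,g)\in L^\infty(P)\times L^\infty(Q)$, and the vanishing of its derivatives along bounded perturbations yields exactly \eqref{eq:FOC.as}. Part (v) is verified by direct substitution: with $d\pi:=\psi'(f_*\oplus g_*-c)\,d(P\otimes Q)$, the FOC says $\pi\in\Pi(P,Q)$, and the Young identity $s\psi'(s)=\varphi(\psi'(s))+\psi(s)$ shows that the primal cost at $\pi$ equals the dual value at $(f_*,g_*)$, so (i) yields primal optimality.

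For existence of a dual optimizer (iii), I would use a Schr\"odinger-type map: Assumption~\ref{as:divergence} ensures $\psi'$ is strictly increasing on $[t_0-\delta,\infty)$ with $\psi'(t_0)=1$, so for each $g\in L^\infty(Q)$ the equation $\int\psi'(f(x)+g(y)-c(x,y))\,dQ(y)=1$ has a unique solution $f=T(g)$, continuous on $\Omega$. Uniform bounds and the modulus of continuity for $T(g)$ (inherited from~$c$) then allow an Arzel\`a--Ascoli argument along a maximizing sequence, with optimality following from upper semicontinuity of the dual objective.

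Part (vi) is the core analytic step. Starting from any $L^\infty$ dual optimizer, I would pass to pointwise versions by first solving \eqref{eq:FOC.pw} in $f_*(x)$ (for the given $g_*$) and then in $g_*(y)$ (for the updated $f_*$); both equations have unique solutions by strict monotonicity of $\psi'$ in the relevant range. Uniform continuity of $f_*$ with modulus~$\rho$ is then inherited from~$c$: shifting $x\mapsto x'$ perturbs the $c$-argument inside the FOC by at most $\rho(\|x-x'\|)$, and monotonicity of $\psi'$ translates this into $|f_*(x)-f_*(x')|\le\rho(\|x-x'\|)$, and symmetrically for $g_*$. For the $L^\infty$ bound, one normalizes by a single additive constant, then evaluates the FOC at extremal points and invokes $\psi'(x)\ge x$ for $x\ge C$ together with $\psi'(t_0)=1$; chaining a few such pointwise estimates produces the constant $5\|c\|_\infty+(\psi')^{-1}(1)$.

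Part (vii) is the step I expect to be the main obstacle. The bound from (vi) confines $\xi_*(x,\cdot)$ to a fixed compact interval $I\subset\R$, on which $\psi''\ge 0$ and $\psi''>0$ on $[t_0-\delta,\infty)\cap I$. I would argue by compactness on the set $\mathcal{M}$ of probability measures $\mu$ on $I$ satisfying $\int\psi'\,d\mu=1$: this set is weakly closed in the compact space of probability measures on $I$, hence weakly compact; and $\mu\mapsto\int\psi''\,d\mu$ is weakly continuous and strictly positive on $\mathcal{M}$, because if $\int\psi''\,d\mu=0$ then $\mu$ is supported on $(-\infty,t_0-\delta]\cap I$, but $\psi'$ is nondecreasing with $\psi'(t_0-\delta)<\psi'(t_0)=1$, contradicting $\int\psi'\,d\mu=1$. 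By compactness, $\inf_{\mathcal{M}}\int\psi''\,d\mu=:\delta>0$ uniformly; applied to $\mu=Q\circ\xi_*(x,\cdot)^{-1}$ this yields the first bound in (vii), and the second follows symmetrically.
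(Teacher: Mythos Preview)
Your proposal is correct and, for parts (i)--(vi), follows essentially the same route as the paper. The paper packages the Schr\"odinger-type map you describe as a separate lemma (for any bounded $g$, the equation $\int\psi'(f(\cdot)+g(y)-c(\cdot,y))\,dQ(y)=1$ has a unique solution $f$ inheriting the modulus of~$c$ and with oscillation at most $2\|c\|_\infty$) and uses it for both (iii) and (vi); your outline does the same thing in place. One small difference: for strong duality you invoke Fenchel--Rockafellar, whereas the paper proves weak duality by the Young inequality and then closes the gap in (v) by exhibiting a feasible primal--dual pair with equal values; both are standard.

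The one genuine difference is in part (vii). The paper argues quantitatively: fixing $x$, it splits $\Omega'$ into $A=\{y:\xi_*(x,y)<t_0-\delta\}$ and $B=\Omega'\setminus A$, uses the constraint $\int\psi'(\xi_*(x,\cdot))\,dQ=1$ together with $\psi'\leq\psi'(t_0-\delta)<1$ on $A$ and $\psi'\leq\psi'(C)$ on $B$ to obtain the explicit bound $Q(A)\leq\frac{\psi'(C)-1}{\psi'(C)-\psi'(t_0-\delta)}<1$ uniformly in $x$, and then uses $\psi''\geq\alpha$ on $[t_0-\delta,C]$ to conclude. Your route is more abstract: you push forward to $\mu_x=Q\circ\xi_*(x,\cdot)^{-1}$, note that all $\mu_x$ lie in the weakly compact set $\mathcal{M}=\{\mu\in\mathcal{P}(I):\int\psi'\,d\mu=1\}$, and minimize the weakly continuous functional $\mu\mapsto\int\psi''\,d\mu$ over $\mathcal{M}$. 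The paper's argument yields an explicit constant; yours does not, but makes the uniformity in $x$ automatic and is a bit cleaner conceptually. Both rest on the same mechanism (the constraint $\int\psi'\,d\mu=1$ forces mass where $\psi''>0$), and both implicitly read ``strictly convex on $[t_0-\delta,\infty)$'' in Assumption~\ref{as:divergence} as $\psi''>0$ there.
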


Functions $(f_*,g_*)$ that are optimal for the dual problem~\eqref{eq:dual1}, or equivalently solve the first-order condition~\eqref{eq:FOC.as}, will be called \emph{potentials} of the regularized optimal transport problem.

\begin{remark}[Limited Smoothness]
  \Cref{pr:ROTprelims}\,(vi) may suggest that the potentials~$f_*$ and~$g_*$ are ``as smooth as the cost~$c$.'' While this holds for the KL divergence of EOT, $\psi$ is in general only $\mathcal{C}^2$ for the polynomial divergences included in our setting, and this implies that $f_*$ and $g_*$ are only $\mathcal{C}^1$ in general. Indeed, the formula
  \begin{align*}   \nabla f_*(\cdot)&=\frac{\int \psi''(f_*(\cdot)+ g_*(y)-c(\cdot,y)) \nabla_x c(\cdot, y) dQ(y)}{\int \psi''(f_*(\cdot)+ g_*(y)-c(\cdot,y))  dQ(y)}
  \end{align*}
  highlights how the smoothness of both $\psi$ and $c$ affects the regularity of $f_*$. (See also \cite{Nutz.24} for a concrete counterexample.) As mentioned in the introduction, the limited smoothness of the empirical potentials  is a major obstacle for deriving our main results. %
\end{remark}

\begin{remark}[Regularization parameter]\label{rk:general.regularization.parameter}
  The regularized optimal transport problem is often considered with a regularization parameter $\eps>0$, which was taken to be $\eps=1$ above. In general,
  \begin{equation*}%
  \ROT_\eps(P,Q):=  \inf_{\pi\in \Pi( P,Q)} \int c d\pi  +  \eps\int \varphi\left( \frac{d \pi}{ d( P \otimes   Q )}  \right) d( P \otimes   Q ),
\end{equation*} 
the associated dual problem is 
\begin{equation}\label{eq:dual.eps}
    \DUAL_\eps(P,Q):=  \sup_{(f,g)\in  L^\infty(P) \times L^\infty(Q)} \, \int\left\{ f\oplus g - \eps\cdot\psi\left(\frac{f\oplus g(y)-c}{\eps} \right) \right\} \,d(P\otimes Q)
\end{equation} 
and the first-order condition for optimality reads
\begin{equation*}
        \begin{cases}
             \int  \psi'\left(\frac{f(\cdot)+ g(y)-c(\cdot,y)}{\eps}\right) dQ(y)=1,\\[.4em]
             \int \psi'\left(\frac{f(x)+ g(\cdot)-c(x,\cdot)}{\eps}\right) dP(x)=1.
        \end{cases}
\end{equation*}
The results for the general problem can be deduced from the special case $\eps=1$ by a simple scaling. Namely, defining $\bar{c}=c/\eps$, we have
$$\ROT_\eps(P,Q)=\eps {\ROT}(P,Q,\bar{c})$$ where ${\textstyle\ROT}(P,Q,\bar{c})$ is the problem with $\eps=1$ and cost $\bar{c}$. Moreover, the optimal couplings of these two problems coincide.  Similarly for the dual: $(f_\eps,g_\eps)$ is optimal for \eqref{eq:dual.eps} if and only if $(\eps f_\eps,\eps g_\eps)$ is optimal for the problem ${\DUAL}(P,Q,\bar{c})$ with $\eps=1$ and cost $\bar{c}$. For convenience, we detail in \cref{rk:general.regularization.parameter.main} below how the main results translate to a general  parameter $\eps>0$.
\end{remark}

\section{Main results}\label{se:mainRes}

Our main results are central limit theorems for the potentials, the optimal costs, and the optimal couplings. The following condition on the population marginals~$P$ and~$Q$ is imposed throughout this section.

\begin{assumption}[Marginals]\label{as:marginals}
The probability measures \( P \) and \( Q \) on $\R^d$ have compact supports \( \Omega \) and \( \Omega' \), respectively, and \( \Omega \) is connected.
\end{assumption}

A  central limit theorem for the potentials can only hold if the limiting (population) potentials $(f_*,g_*)$ are unique in a suitable sense. Indeed we obtain a uniqueness result under quite general conditions as by-product of our approach to the central limit theorem. Note that potentials always admit a degree of freedom: if $(f_*,g_*)$ are potentials, then so are $(f_*+a,g_*-a)$ for any $a\in\R$. Additionally, there is some freedom in changing potentials on nullsets, which can be mended by using continuous versions as in \cref{pr:ROTprelims}. To eliminate those degrees of freedom, we define the Banach space  $$\Banach=(\mathcal{C}(\Omega)\times \mathcal{C}(\Omega')/_{\sim_{\oplus}}$$  where $(f,g)\sim_{\oplus} (u,v)$ if $f\oplus g=u\oplus v$, endowed with the quotient norm 
$$ \|(f,g)\|_{\oplus}=\inf_{a\in \R}\{\|f-a\|_\infty+ \|g+a\|_\infty\}.$$
 The equivalence class of $(f,g)$ will be denoted by $[(f,g)]_\oplus$ when we want to emphasize the identification; however, we often just write $(f,g)$ for the sake of brevity. We can now state the uniqueness of the population potentials. We emphasize that this result holds for costs~$c$ that are merely continuous, in contrast to related results in the literature that rely on differentiating~$c$ (cf.\ \cref{se:methodology}).

 \begin{theorem}\label{th:uniqueness}
   Let $c\in\mathcal{C}(\Omega\times\Omega')$ and let $(P,Q)$ satisfy \cref{as:marginals}. Then the associated potentials $(f_*,g_*)$ are unique in~$\Banach$. That is, there exist $(f_*,g_*)\in \mathcal{C}(\Omega)\times \mathcal{C}(\Omega')$ solving~\eqref{eq:FOC.pw}, and any pair solving~\eqref{eq:FOC.pw} is of the form $(f_*+a,g_*-a)$ for some $a\in\R$.
 \end{theorem}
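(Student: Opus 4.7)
The plan is to reduce uniqueness to the injectivity of the linearization $\mathbb{L}$ of the first-order condition at $(f_*,g_*)$, which is what the paper announces (in \cref{se:methodology}) as the substitute for the usual cost-gradient argument. Existence of some continuous solution of \eqref{eq:FOC.pw} is already provided by \cref{pr:ROTprelims}, so the task is to show that any two such solutions differ by an additive constant $(a,-a)$. Given a second continuous solution $(\tilde f,\tilde g)$, I would set $u:=\tilde f-f_*$, $v:=\tilde g-g_*$, and aim for $u\equiv a$ on $\Omega$ and $v\equiv -a$ on $\Omega'$.

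First I would use concavity of the dual functional \eqref{eq:dual1} (which holds because $-\psi$ is concave) to observe that every convex combination $(f_\lambda,g_\lambda):=(f_*+\lambda u,\,g_*+\lambda v)$, $\lambda\in[0,1]$, is again a dual maximizer. By \cref{pr:ROTprelims}\,(iv), each $(f_\lambda,g_\lambda)$ thus satisfies the first-order condition; being already continuous, it satisfies the pointwise form \eqref{eq:FOC.pw} for all $\lambda$. Writing $\xi_\lambda:=f_\lambda\oplus g_\lambda-c$, the identity $\int \psi'(\xi_\lambda(x,\cdot))\,dQ\equiv 1$ can be differentiated in $\lambda$ under the integral (all integrands are jointly continuous on the compact $\Omega\times\Omega'\times[0,1]$), giving at $\lambda=0$
\[
\int \psi''(\xi_*(x,y))\bigl(u(x)+v(y)\bigr)\,dQ(y)=0\qquad\text{for every } x\in\Omega,
\]
together with the symmetric identity obtained from the $P$-integral. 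Reading the pair of left-hand sides as the value of a bounded linear operator $\mathbb{L}$ on $\Banach$, this says $\mathbb{L}(u,v)=0$.

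The remaining step is to deduce from $\mathbb{L}(u,v)=0$ that $[(u,v)]_\oplus=0$ in $\Banach$, i.e.\ $u(x)+v(y)\equiv 0$ on $\Omega\times\Omega'$; fixing any $y_0\in\Omega'$ then forces $u\equiv -v(y_0)=:a$ and hence $v\equiv -a$. This is exactly the injectivity of $\mathbb{L}$ on $\Banach$, which the paper establishes later in \cref{se:linearization}. A quick orientation for why such an injectivity should hold: pairing $\mathbb{L}(u,v)$ with $(u,v)$ in the natural $L^2(P)\oplus L^2(Q)$ inner product and using Fubini immediately gives
\[
\int \bigl(u(x)+v(y)\bigr)^{2}\,\psi''(\xi_*(x,y))\,d(P\otimes Q)(x,y)=0,
\]
so $u\oplus v$ must vanish on the open set $\{\psi''(\xi_*)>0\}\subset\Omega\times\Omega'$ by continuity.

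The main obstacle is promoting this vanishing from $\{\psi''(\xi_*)>0\}$ to all of $\Omega\times\Omega'$. This is a bipartite-connectedness statement: one must be able to link any two points of $\Omega$ through a common $y\in\Omega'$ at which $\psi''(\xi_*)>0$. The hypothesis that $\Omega$ is connected, together with continuity of $\xi_*$ from \cref{pr:ROTprelims}\,(vi) and the strict convexity of $\psi$ on $[t_0-\delta,\infty)$ from \cref{as:divergence}, is exactly what is needed to make this argument go through in the generality of a merely continuous cost $c$. This is also why the paper gives up on the gradient-of-potential argument of \cite{Nutz.24} and instead structures everything around $\mathbb{L}$.
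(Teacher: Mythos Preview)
Your proposal is correct and follows essentially the same route as the paper: use concavity of the dual to see that convex combinations of two solutions remain solutions, differentiate the first-order condition along the segment at $\lambda=0$ to obtain $\mathbb{L}(u,v)=0$, and then invoke the injectivity of $\mathbb{L}$ established in \cref{se:linearization} (specifically \cref{pr:Invertible}).

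One remark on your ``orientation'' paragraph: the $L^2$-pairing identity $\int (u\oplus v)^2\,\psi''(\xi_*)\,d(P\otimes Q)=0$ is a genuinely different entry point to injectivity than the paper's own argument. The paper proves \cref{pr:Invertible} by writing $\mathbb{L}=\mathrm{Id}+\mathbb{A}$ with $\mathbb{A}$ compact, and then rules out the eigenvalue $-1$ via an $\argmax|f|$ argument combined with \cref{Lemma:aBallInside}. Your pairing immediately forces $u\oplus v=0$ on the open set $S=\{\psi''(\xi_*)>0\}$, after which the same \cref{Lemma:aBallInside} and connectedness of $\Omega$ would finish the injectivity claim more directly. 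For the uniqueness theorem alone this is arguably cleaner; the paper nonetheless needs the full Fredholm route because the CLTs require not just injectivity but a bounded inverse $\mathbb{L}^{-1}$ on $\Banach$.
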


Next, we present the central limit theorems. Given (population) marginals $P,Q$ satisfying \cref{as:marginals}, we consider the two-sample case. Let \(X_1, \dots, X_n\) be i.i.d.\ samples from \(P\) and let \(Y_1, \dots, Y_n\) be i.i.d.\ samples from \(Q\); more precisely, the two sequences are independent and defined on the common probability space \((\boldsymbol{\Omega}, \mathcal{A}, \mathbb{P})\). The random samples give rise to the empirical measures $P_{n}=\frac{1}{n}\sum_{i=1}^{n}\delta_{X_i} $ and $Q_{n}=\frac{1}{n}\sum_{i=1}^{n}\delta_{Y_i}$, which in turn lead to empirical potentials $(f_n,g_n)$ by \cref{pr:ROTprelims}. More precisely, \cref{pr:ROTprelims} yields $(f_n,g_n)$ satisfying~\eqref{eq:FOC.as} on the supports ${\rm supp}(P_n)$ and ${\rm supp}(Q_n)$, respectively, but we can extend $(f_n,g_n)$ to continuous functions on $\Omega\times\Omega'$. While $(f_n,g_n)$ are not unique, we choose and fix them. Note that all these objects are random; that is, depend on the parameter $\omega\in\boldsymbol{\Omega}$ determining the realization of the random samples. We choose $(f_n,g_n)$ such that the dependence on $\omega$ is measurable. To simplify the notation we also introduce
\begin{equation}\label{eq:xi.def}
  \xi_*(x,y):= f_*(x)+ g_*(y)-c(x,y) \quad {\rm and}\quad \xi_n(x,y):= f_n(x)+ g_n(y)-c(x,y).
\end{equation}

\begin{assumption}[Cost]\label{as:cost}
The transport cost $c$ is in $\mathcal{C}^1(\Omega\times\Omega')$.
\end{assumption}

This assumption is in force for the three central limit theorems below, in addition to \cref{as:divergence} on the divergence and \cref{as:marginals} on the population marginals $(P,Q)$. We can now state our main result.

\begin{theorem}[CLT for potentials]\label{th:CLT.pot}
Let $\mathbf{G}_P=(\mathbf{G}_P(y))_{y\in\Omega'}$ be a centered Gaussian process in $\mathcal{C}(\Omega')$ with covariance function
\begin{multline*}
    \mathbb{E}[\mathbf{G}_P(y)\mathbf{G}_P(y')]
    \\= \mathbb{E}\left\{\left(\psi'(\xi_*(X,y))-\mathbb{E}\left[ \psi'(\xi_*(X,y)) \right]\right)\left(\psi'(\xi_*(X,y'))-\mathbb{E}\left[ \psi'(\xi_*(X,y')) \right]\right)\right\} 
\end{multline*}
for $(y,y')\in\Omega'$, where $X\sim P$. Let $\mathbf{G}_Q=(\mathbf{G}_Q(x))_{x\in\Omega}$ be a centered Gaussian process in $\mathcal{C}(\Omega)$ with analogous covariance, independent of $\mathbf{G}_P$. Then the weak limit 
 $$
\sqrt{n} \left(\begin{array}{c}
     f_n-f_*  \\
     g_n-g_* 
\end{array} \right)\xrightarrow{w} -\mathbb{L}^{-1}\left[\left( \begin{array}{c}
           \frac{\mathbf{G}_Q } {\int \psi''(\xi_*(\cdot, x))  dQ(y)}   \\
            \frac{\mathbf{G}_P}{\int \psi''(\xi_*(x, \cdot)) dP(x)}  
        \end{array}\right)\right]_{\oplus} 
$$
holds in $\Banach$, where $\mathbb{L}^{-1}\in\mathcal{L}(\Banach, \Banach)$ is the inverse of the (bounded, bijective) linear operator
\begin{align*}
    \mathbb{L}: \Banach   &\to  \Banach, \qquad
   (f,g) \mapsto \left( \begin{array}{c}
        f(\cdot) +  \frac{\int \psi''(\xi_*(\cdot,y)) g(y) dQ(y)}{\int \psi''(\xi_*(\cdot,y)) dQ(y)} \\
         g(\cdot) + \frac{\int \psi''(\xi_*(x,\cdot))  f(x) dP(x)}{\int \psi''(\xi_*(x,\cdot))  dP(x)}
   \end{array}\right).
\end{align*}
\end{theorem}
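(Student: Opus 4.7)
The plan is to frame the proof as a Z-estimation problem exactly as outlined in Section 1.2. For $\theta=(f,g)\in\Banach$ set $\xi=f\oplus g-c$ and define
$$
\Phi(\theta)(x,y)=\left(\int \psi'(\xi(x,y'))\,dQ(y')-1,\;\int \psi'(\xi(x',y))\,dP(x')-1\right),
$$
with $\Phi_n$ defined analogously using $P_n,Q_n$. Both operators are $\sim_\oplus$-invariant, so they factor through $\Banach$, and by \cref{pr:ROTprelims} the potentials $\theta_*$ and $\theta_n$ are their zeros. A short computation shows that the Fréchet derivative of $\Phi$ at $\theta_*$ equals, after dividing the two components by the strictly positive quantities $\int\psi''(\xi_*(\cdot,y))\,dQ(y)$ and $\int\psi''(\xi_*(x,\cdot))\,dP(x)$ provided by \cref{pr:ROTprelims}(vii), exactly the operator $\mathbb{L}$ in the statement.

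I would then collect the three analytic inputs. First, invertibility of $\mathbb{L}$ (this is \cref{pr:Invertible}): write $\mathbb{L}=\mathrm{Id}+K$ where $K$ is compact on $\Banach$, since the continuous bounded integral kernels defining it send bounded subsets of $\Banach$ to equicontinuous families, hence to relatively compact sets by Arzelà–Ascoli. By the Fredholm alternative, invertibility reduces to injectivity, and this is the step that uses connectedness of $\Omega$ (\cref{as:marginals}) together with strict convexity/positivity of $\psi''$ near $t_0$ from \cref{as:divergence}. Second, consistency $\|\theta_n-\theta_*\|_\oplus\xrightarrow{\mathbb{P}}0$: \cref{pr:ROTprelims}(vi) supplies a uniform equicontinuity modulus for $(f_n,g_n)$, Arzelà–Ascoli gives subsequential $\Banach$-limits, and \cref{th:uniqueness} identifies each with $\theta_*$. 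Third, a functional CLT for $\sqrt{n}[\Phi_n-\Phi](\theta_*)$ from \cref{se:HolderCLT}: since $c\in\mathcal{C}^1$ and $\psi'\in\mathcal{C}^1$, the integrands $\psi'(\xi_*(x,\cdot))$ and $\psi'(\xi_*(\cdot,y))$ are uniformly Lipschitz in the free variable, so the abstract CLT produces a Gaussian limit in a stronger space $\mathcal{B}_s$ whose unit ball is compact in $\Banach$, and a direct covariance computation identifies it (up to the normalizing integrals) with the processes $\mathbf{G}_Q,\mathbf{G}_P$.

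The stochastic core is the Z-estimation remainder condition (iv'), i.e.\ the decomposition of $\Delta_n:=[\Phi_n-\Phi](\theta_n)-[\Phi_n-\Phi](\theta_*)$. Using $\psi\in\mathcal{C}^2$ and the fundamental theorem of calculus, the first component of $\Delta_n$ reads
$$
\int \bigl[(f_n-f_*)(x)+(g_n-g_*)(y)\bigr]\,\bar\psi_n''(x,y)\,d(Q_n-Q)(y),
$$
with $\bar\psi_n''(x,y)=\int_0^1\psi''(\lambda\xi_*+(1-\lambda)\xi_n)\,d\lambda$ uniformly bounded and, by consistency, uniformly equicontinuous in both arguments with high probability. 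Factoring $\|\theta_n-\theta_*\|_\oplus$ out of the bracket and exploiting the compact embedding $\mathcal{B}_s\subset\Banach$ for the empirical integrals against $Q_n-Q$ writes $\Delta_n$ as $(U_n+V_n)\|\theta_n-\theta_*\|_\oplus$ with $U_n$ taking values in a compact subset of $\Banach$ and $\|V_n\|_\oplus\xrightarrow{\mathbb{P}}0$; this is \cref{lemma:develomentIncompact}.

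For the assembly I would follow the template of the introduction. Fréchet differentiability and invertibility of $\mathbb{L}$ give
$$
\theta_*-\theta_n=\mathbb{L}^{-1}\bigl([\Phi_n-\Phi](\theta_*)+\Delta_n\bigr)+o_{\mathbb{P}}(\|\theta_n-\theta_*\|_\oplus);
$$
substituting the three-term form of $\Delta_n$ yields \eqref{eq:threeTermsIntro} (via \cref{pr:three.terms}), which in turn implies both the rate $\|\theta_n-\theta_*\|_\oplus=\mathcal{O}_{\mathbb{P}}(n^{-1/2})$ and the Z-estimation remainder bound \eqref{eq:ZestimationTaylor}. Plugging this back in and applying the continuous linear map $-\mathbb{L}^{-1}$ to the Gaussian limit of $\sqrt n[\Phi_n-\Phi](\theta_*)$ delivers the weak limit in $\Banach$ with the stated covariance structure. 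I expect the hardest part to be the simultaneous bookkeeping of two topologies: the compact embedding $\mathcal{B}_s\subset\Banach$ has to be used both to lift the Gaussian CLT for $\sqrt{n}[\Phi_n-\Phi](\theta_*)$ into a usable tightness statement and to tame $\Delta_n$ in (iv'), and arranging these pieces so that the $o_{\mathbb{P}}(n^{-1/2})$ terms do not interact badly with the scaling $\|\theta_n-\theta_*\|_\oplus$ is where most of the technical care will go.
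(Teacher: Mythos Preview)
Your proposal is correct and mirrors the paper's approach step for step: Z-estimation via the normalized first-order operators, invertibility of $\mathbb{L}$ by Fredholm and the connectedness argument, consistency from equicontinuity plus uniqueness, the H\"older-space CLT of \cref{lemma:CLTInHolder}, the decomposition \cref{lemma:develomentIncompact} feeding into \cref{pr:three.terms}, and then \cref{Coro:exchangeGamman} together with the continuous mapping theorem. One small correction on the mechanism in (iv'): the compactness of $U_n$ in \cref{lemma:develomentIncompact} does not come from the embedding $\mathcal{B}_s\subset\Banach$ but from Arzel\`a--Ascoli applied directly to the integral operator $\mu\mapsto\int\bar\psi''_n(\cdot,y)\,d\mu(y)$ over signed measures of bounded total variation (\cref{lemma:compactclass2}), the compact embedding only entering later in \cref{Coro:exchangeGamman} to handle the $W_n$ term; also, the equicontinuity of $\bar\psi''_n$ holds deterministically via the uniform Lipschitz bound of \cref{pr:ROTprelims}(vi), not merely ``with high probability'' through consistency.
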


\begin{theorem}[CLT for costs]\label{th:CLT.cost}
   We have 
    $$ \sqrt{n}\big( \ROT(P_n,Q_n) -\ROT(P,Q) \big)\xrightarrow{w} N(0,\sigma^2) $$
    where the variance $\sigma^2$ is that of the random variable
    \begin{align}\label{eq:variance.CLT.cost}
	f_*(X) + g_*(Y) - \bigg(\int \psi\left(\xi_*(x,Y)\right) dP(x)
	+ \int \psi\left(\xi_*(X,y) \right) dQ(y) \bigg)
    \end{align}
    for $(X,Y)\sim P\otimes Q$.
\end{theorem}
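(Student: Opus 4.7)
The plan is to combine strong duality with an envelope-theorem linearization, splitting the cost difference into a linear term governed by a two-sample CLT and a quadratic remainder controlled via the potential rate from \cref{th:CLT.pot}. Set $H(f,g;\mu,\nu) := \int(f\oplus g - \psi(f\oplus g - c))\,d(\mu\otimes\nu)$. By \cref{pr:ROTprelims}\,(i), $\ROT(P_n,Q_n) = H(f_n,g_n;P_n,Q_n)$ and $\ROT(P,Q) = H(f_*,g_*;P,Q)$, which leads to the decomposition
\[
\ROT(P_n,Q_n) - \ROT(P,Q) = A_n + B_n,
\]
where $A_n := H(f_*,g_*;P_n,Q_n) - H(f_*,g_*;P,Q)$ uses only the population potentials, while $B_n := H(f_n,g_n;P_n,Q_n) - H(f_*,g_*;P_n,Q_n)$ measures the gap from switching to the empirical potentials at the empirical measure.

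First, I treat $A_n$ as a two-sample $V$-statistic of the kernel $h(x,y) := f_*(x) + g_*(y) - \psi(\xi_*(x,y))$. Writing $P_n\otimes Q_n - P\otimes Q = (P_n-P)\otimes Q + P\otimes(Q_n-Q) + (P_n-P)\otimes(Q_n-Q)$ and using the independence of the two samples, I obtain
\[
A_n = \int h_1\,d(P_n-P) + \int h_2\,d(Q_n-Q) + \mathcal{O}_\mathbb{P}(1/n),
\]
with $h_1(x) := \int h(x,\cdot)\,dQ$ and $h_2(y) := \int h(\cdot,y)\,dP$; the cross term is of order $1/n$ because it pairs two independent centered empirical processes. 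The classical CLT applied to each independent sum then gives $\sqrt{n}A_n \xrightarrow{w} N(0, \Var(h_1(X)) + \Var(h_2(Y)))$. Since $X$ and $Y$ are independent and $h_1(X) + h_2(Y)$ differs from the random variable in~\eqref{eq:variance.CLT.cost} only by a deterministic additive constant, the limit matches $N(0,\sigma^2)$.

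Second, for $B_n$ I exploit the envelope property. Concavity of $(f,g) \mapsto H(f,g;P_n,Q_n)$ (since $\psi$ is convex) and optimality of $(f_n,g_n)$ give $B_n \ge 0$; the subgradient inequality at $(f_*,g_*)$ yields the upper bound
\[
B_n \le \int (f_n - f_*)(x)\Bigl(1 - \int \psi'(\xi_*(x,y))\,dQ_n(y)\Bigr) dP_n(x) + (\text{symmetric in }g).
\]
Invoking the population FOC $\int \psi'(\xi_*(x,y))\,dQ(y)=1$ for every $x\in\Omega$ from \cref{pr:ROTprelims}\,(vi), this upper bound becomes
\[
-\int (f_n - f_*)(x)\Bigl[\int \psi'(\xi_*(x,y))\,d(Q_n - Q)(y)\Bigr] dP_n(x) + (\text{symmetric}),
\]
which is dominated by $\|f_n - f_*\|_\infty \cdot \sup_{x\in\Omega}\bigl|(Q_n - Q)\psi'(\xi_*(x,\cdot))\bigr|$ plus the symmetric counterpart. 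By \cref{th:CLT.pot} the first factor is $\mathcal{O}_\mathbb{P}(n^{-1/2})$, and the second is also $\mathcal{O}_\mathbb{P}(n^{-1/2})$, because the class $\{\psi'(\xi_*(x,\cdot)) : x\in\Omega\}$ is uniformly Lipschitz in $x$ (using $\psi\in\mathcal{C}^2$, $c\in\mathcal{C}^1$ from \cref{as:cost}, and the $\mathcal{C}^1$-regularity of $(f_*,g_*)$ noted after \cref{pr:ROTprelims}) and hence $Q$-Donsker on the compact set $\Omega$. Thus $B_n = \mathcal{O}_\mathbb{P}(1/n) = o_\mathbb{P}(n^{-1/2})$, and combining with the analysis of $A_n$ yields $\sqrt{n}(\ROT(P_n,Q_n) - \ROT(P,Q)) \xrightarrow{w} N(0,\sigma^2)$.

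The main obstacle is showing $B_n = o_\mathbb{P}(n^{-1/2})$: concavity delivers the envelope linearization, but closing the argument needs both the parametric rate for the potentials (\cref{th:CLT.pot}) and a uniform empirical-process bound over $x\in\Omega$ for the family $\{\psi'(\xi_*(x,\cdot))\}_x$, for which the smoothness hypotheses of \cref{as:divergence} and \cref{as:cost} are precisely what is required.
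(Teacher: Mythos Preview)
Your proof is correct and takes a genuinely different route from the paper's for the remainder term. Both approaches identify the leading term as $A_n=\int h_*\,d((P_n\otimes Q_n)-(P\otimes Q))$ with $h_*=f_*\oplus g_*-\psi(\xi_*)$ (the paper's lower bound) and then show that the gap $B_n:=\ROT(P_n,Q_n)-\ROT(P,Q)-A_n$ is $o_{\mathbb{P}}(n^{-1/2})$. The paper sandwiches: it bounds $B_n$ above by $\int(h_n-h_*)\,d((P_n\otimes Q_n)-(P\otimes Q))$, rewrites this via the fundamental theorem of calculus as $\int\tilde{\psi}_n\,(\xi_n-\xi_*)\,d((P_n\otimes Q_n)-(P\otimes Q))$, and then re-invokes the three-term decomposition of \cref{pr:three.terms} together with the Glivenko--Cantelli arguments of \cref{Coro:exchangeGamman}. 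Your envelope argument is more direct: the subgradient inequality at $(f_*,g_*)$ in the \emph{empirical} dual, combined with the \emph{population} first-order condition, produces a bound that factors as $\|f_n-f_*\|_\infty\cdot\sup_{x}|(Q_n-Q)\psi'(\xi_*(x,\cdot))|$ plus the symmetric term; each factor is $\mathcal{O}_{\mathbb{P}}(n^{-1/2})$ by \cref{th:CLT.pot} and, essentially, \cref{lemma:CLTInHolder}. This avoids re-invoking \cref{pr:three.terms} and even yields the sharper order $B_n=\mathcal{O}_{\mathbb{P}}(1/n)$, at the price of being tailored to the concave dual structure. Two minor points: first, \cref{th:CLT.pot} gives the rate in the quotient norm $\|\cdot\|_\oplus$, not $\|f_n-f_*\|_\infty$ directly, but your bound $D_n$ is invariant under the shift $(f_n,g_n)\mapsto(f_n+a,g_n-a)$ (precisely because of the population FOC), so you may choose the representative with $\|f_n-f_*\|_\infty+\|g_n-g_*\|_\infty=\|(f_n-f_*,g_n-g_*)\|_\oplus$ as the paper does at the start of \cref{se:key.decomposition}; second, for the Donsker claim on $\{\psi'(\xi_*(x,\cdot)):x\in\Omega\}$ you only need uniform Lipschitz continuity in~$y$, which already follows from \cref{pr:ROTprelims}\,(vi) (no $\mathcal{C}^1$ regularity of the potentials is needed).
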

Let $\pi\in\Pi(P,Q)$ denote the optimal coupling for the population marginals and let $\pi_n\in\Pi(P_n,Q_n)$ denote the empirical counterpart. 
To state the central limit theorem for $\pi_n\to\pi$, recall that $[(f,g)]_\oplus$ denotes the equivalence class of $(f,g)\in\mathcal{C}(\Omega)\times\mathcal{C}(\Omega')$ in $\Banach$. To streamline the notation, we also define the operator
$$ [(f,g)]_\oplus\mapsto \oplus(f,g):=f\oplus g \in \mathcal{C}(\Omega\times \Omega').$$

\begin{theorem}[CLT for couplings]\label{th:CLT.plans}
    For any bounded and measurable $\eta: \Omega\times\Omega'\to\mathbb{R}$, 
    $$ \sqrt{n}\left(\int \eta d(\pi_n - \pi) \right) \xrightarrow{w}  N(0, \sigma^2(\eta)) $$
    with $\sigma^2(\eta)=\Var(V_X + V_Y)$ where, for $Z\in\{X,Y\}$,
    \begin{align*}
      V_Z :=\mathbb{E}\Bigg[  \int  U(x,y,X,Y) \bar{\eta}(x,y) \psi'(\xi_*(x,y)) d(P\otimes Q)(x,y)
        -\bar{\eta}(X,Y)\psi' (\xi_*(X,Y))  \Bigg\vert Z\Bigg]
    \end{align*}
    for $(X,Y)\sim P\otimes Q$ and $ \bar{\eta}:= \eta - \int \eta \psi''(\xi_*)  d(P\otimes Q)$ and
    $$
     U(\cdot,\cdot,X,Y) := \oplus\left( \mathbb{L}^{-1}\left[\left( \begin{array}{c}
           \frac{ \psi' (\xi_*( \cdot ,Y))  } {\int \psi''(\xi_*( \cdot ,y))  dQ(y)}   \\
            \frac{ \psi' (\xi_*(X, \cdot )) }{\int \psi''(\xi_*(x, \cdot ))  dP(x)}  
        \end{array}\right)\right]_{\oplus} \right) .
    $$
\end{theorem}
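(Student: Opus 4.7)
The plan is to linearize $\int \eta\,d\pi_n$ around the population potentials $(f_*,g_*)$, reduce to a sum of asymptotically independent influence-function summands indexed by the two samples, and identify the variance with $\sigma^2(\eta)$. Since $d\pi_n = \psi'(\xi_n)\,d(P_n \otimes Q_n)$ and $d\pi = \psi'(\xi_*)\,d(P \otimes Q)$, and since $\psi\in\mathcal{C}^2$ by \cref{as:divergence} with $\xi_n,\xi_*$ uniformly bounded by \cref{pr:ROTprelims}, a first-order Taylor expansion of $\psi'$ at $\xi_*$ gives
\begin{align*}
\int \eta\,d(\pi_n - \pi)
&= \int \eta\,\psi'(\xi_*)\,d(P_n \otimes Q_n - P \otimes Q) \\
&\quad + \int \eta\,\psi''(\xi_*)(\xi_n - \xi_*)\,d(P \otimes Q) + R_n.
\end{align*}
The first task is to verify $\sqrt{n}\,R_n = o_{\mathbb{P}}(1)$. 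The quadratic Taylor remainder contributes $O_{\mathbb{P}}(n^{-1})$ thanks to $\|\xi_n-\xi_*\|_\infty = O_{\mathbb{P}}(n^{-1/2})$ from \cref{th:CLT.pot}. The delicate cross term $\int\eta\psi''(\xi_*)(\xi_n-\xi_*)\,d(P_n\otimes Q_n-P\otimes Q)$ is split via $d(P_n\otimes Q_n - P\otimes Q) = dP_n\otimes d(Q_n-Q) + d(P_n-P)\otimes dQ$: the ``product-difference'' part is $O_{\mathbb{P}}(n^{-1})$ by Cauchy--Schwarz and the $n^{-1/2}$ inner rate of $Q_n-Q$, leaving a residual of the form $\int (f_n-f_*)\tilde\rho^Q\,d(P_n-P)$ with deterministic $\tilde\rho^Q(x):=\int\eta(x,y)\psi''(\xi_*(x,y))\,dQ(y)$.

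For the two surviving leading terms, the direct one is a bounded-kernel two-sample $V$-statistic: its Hoeffding decomposition expresses it as $n^{-1/2}\sum_i[h^Q(X_i)-\bar h] + n^{-1/2}\sum_j[h^P(Y_j)-\bar h] + o_{\mathbb{P}}(1)$ with $h^Q(x)=\int\eta(x,y)\psi'(\xi_*(x,y))\,dQ(y)$ and symmetric $h^P$. By Fubini the linearization term equals
\begin{equation*}
\int (f_n-f_*)\tilde\rho^Q\,dP + \int (g_n-g_*)\tilde\rho^P\,dQ,
\end{equation*}
a bounded linear functional on $\Banach$ that is well-defined on the quotient by $\sim_\oplus$ because $\int\tilde\rho^Q\,dP=\int\tilde\rho^P\,dQ$. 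Applying \cref{th:CLT.pot} via the continuous mapping theorem gives the Gaussian limit $-\int\eta\psi''(\xi_*)\,\oplus\!\bigl(\mathbb{L}^{-1}[(\mathbf{G}_Q/\rho_*^Q,\mathbf{G}_P/\rho_*^P)]_\oplus\bigr)\,d(P\otimes Q)$, where $\rho_*^Q(x)=\int\psi''(\xi_*(x,\cdot))\,dQ$ and $\rho_*^P(y)=\int\psi''(\xi_*(\cdot,y))\,dP$.

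To recast the combined limit as a sum of influence functions, I would use the pre-limit representations $\mathbf{G}_P(y)\simeq n^{-1/2}\sum_i[\psi'(\xi_*(X_i,y))-1]$ and the symmetric one for $\mathbf{G}_Q$, both centered by the first-order condition $\int\psi'(\xi_*(\cdot,y))\,dP = \int\psi'(\xi_*(x,\cdot))\,dQ = 1$. Linearity of $\mathbb{L}^{-1}$ distributes the potentials contribution into an $X_i$-sum and a $Y_j$-sum; combining with the Hoeffding pieces above yields
\begin{equation*}
\sqrt{n}\int\eta\,d(\pi_n-\pi) = \tfrac{1}{\sqrt n}\sum_i V^X(X_i) + \tfrac{1}{\sqrt n}\sum_j V^Y(Y_j) + o_{\mathbb{P}}(1),
\end{equation*}
with $V^X(X),V^Y(Y)$ the influence functions of $T(P,Q)=\int\eta\,d\pi$ under Dirac perturbations of $P$ and $Q$ respectively. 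Independence of the two samples and the classical CLT then give the normal limit with variance $\Var(V^X)+\Var(V^Y)$. Matching the intermediate expressions $V^X,V^Y$ (which carry the $\eta\psi''$-pairing arising from the Taylor route) to the stated $V_X,V_Y$ (which carry the $\bar\eta\psi'$-pairing and the operator $U$) is an algebraic identity exploiting the self-adjointness of $\mathbb{L}$ with respect to the weighted inner product $\langle(f_1,g_1),(f_2,g_2)\rangle := \int f_1 f_2\rho_*^Q\,dP + \int g_1 g_2\rho_*^P\,dQ$, together with the first-order condition; this identity converts the $\psi''$-weighted pairing into a $\psi'$-weighted one and generates the shift $\eta\leadsto\bar\eta$.

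The principal obstacle is the residual estimate $\sqrt{n}\int (f_n-f_*)\tilde\rho^Q\,d(P_n-P)=o_{\mathbb{P}}(1)$: the standard empirical-process route via Donsker classes is unavailable in our setting because the empirical potentials lack sufficient regularity, so one must use the three-term decomposition $\theta_n-\theta_*=\|\theta_n-\theta_*\|(U_n'+V_n')+n^{-1/2}W_n$ from \cref{se:methodology}. Uniform Glivenko--Cantelli on the compact set containing $U_n'$ handles the first two contributions, while the compact embedding $\mathcal{B}_s\hookrightarrow\Banach$ places $W_n$ in a $\Banach$-compact set, so uniform LLN on compacts controls the last piece at the required rate. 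A secondary difficulty is the bookkeeping in the algebraic identification of $V^X,V^Y$ with $V_X,V_Y$; beyond the self-adjointness identity above, this requires careful tracking of the deterministic constants produced by the ``$-1$'' centering of $\mathbf{G}_P,\mathbf{G}_Q$.
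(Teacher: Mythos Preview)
Your decomposition into a direct term, a linearization term, and a cross/remainder term is exactly the paper's strategy (there written as $C_n$, $B_n$, $A_n$ after first replacing $\eta$ by $\bar\eta$, which is harmless since they differ by a constant). The difficulties you flag are the right ones, but two steps are not yet under control.

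\textbf{Cross term.} Your claim that the $dP_n\otimes d(Q_n-Q)$ piece of $\int\eta\psi''(\xi_*)(\xi_n-\xi_*)\,d(P_n\otimes Q_n-P\otimes Q)$ is $O_{\mathbb P}(n^{-1})$ ``by Cauchy--Schwarz'' is only partially correct. Splitting $\xi_n-\xi_*=(f_n-f_*)+(g_n-g_*)$, the $(f_n-f_*)$ part indeed gives $O_{\mathbb P}(n^{-1})$ by your argument, but the $(g_n-g_*)$ part becomes $\int k_n(y)(g_n-g_*)(y)\,d(Q_n-Q)(y)$ with $k_n(y)=\int\eta\psi''(\xi_*)\,dP_n$, and here the random $g_n-g_*$ is paired with the \emph{same} empirical fluctuation $Q_n-Q$; the naive bound gives only $O_{\mathbb P}(n^{-1/2})$. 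This piece, like your ``residual'' $\int(f_n-f_*)\tilde\rho^Q\,d(P_n-P)$, requires the three-term decomposition of \cref{pr:three.terms}. (The analogous $(g_n-g_*)$ contribution from the $d(P_n-P)\otimes dQ$ side is also missing from your accounting.) The paper avoids this case analysis by treating the entire cross term at once: write $\psi'(\xi_n)-\psi'(\xi_*)=\psi_n\cdot(\xi_n-\xi_*)$ via the fundamental theorem of calculus, insert \eqref{eq:three.terms}, and bound each of the three resulting pieces exactly as in \eqref{eq:boundAn}--\eqref{eq:boundCn} (now integrating over both variables, with the fixed bounded factor $\bar\eta$ preserving the Glivenko--Cantelli property). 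A minor related point: under $\psi\in\mathcal C^2$ the Taylor remainder is only $o_{\mathbb P}(n^{-1/2})$, not $O_{\mathbb P}(n^{-1})$.

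\textbf{Linearization and variance.} Your route through \cref{th:CLT.pot} plus continuous mapping is valid, but the subsequent ``algebraic identity via self-adjointness'' to match your $V^X,V^Y$ to the stated $V_X,V_Y$ is unnecessary and not carried out. The paper instead substitutes the relation \eqref{eq:Gamma.tilde.vs.L}, namely $(f_n-f_*,g_n-g_*)=\mathbb L^{-1}\big([\tilde\Gamma-\tilde\Gamma_n](f_*,g_*)\big)+o_{\mathbb P}(n^{-1/2})$, directly into $B_n$. This immediately expresses $B_n$ (up to $o_{\mathbb P}(n^{-1/2})$) as a centered two-sample $V$-statistic $\bar B_n$; adding $C_n$ yields a single $U$-statistic, and the variance formula drops out of the H\'ajek projection with no joint-convergence bookkeeping and no $\psi''\leadsto\psi'$ conversion needed.
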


\begin{remark}[Regularization parameter]\label{rk:general.regularization.parameter.main}
  Let us detail how the main results, stated above for $\eps=1$, read in the case of a general regularization parameter $\eps>0$. The following can be deduced by the scaling argument in \cref{rk:general.regularization.parameter}. First, $(f_*,g_*)$ are replaced by the rescaled potentials $(f_\eps,g_\eps)$ as defined in \cref{rk:general.regularization.parameter}, and $\xi_*$ is replaced by
  $$
  \xi_\eps(x,y)=\frac{f_\eps(x)+ g_\eps(y)-c(x,y)}{\eps}.
  $$
  Apart from those changes, the formula for $\mathbb{L}$ remains unchanged. In the statement of \cref{th:CLT.pot}, we further add a multiplicative factor $\eps$ in front of $\mathbb{L}^{-1}$. While in \cref{th:CLT.cost}, we replace $\ROT$ by ${\textstyle\ROT_\eps}$ on the left-hand side and add a multiplicative factor $\eps$ in front of the bracket in \eqref{eq:variance.CLT.cost}. Finally, the statement of \cref{th:CLT.plans} requires not further changes, with the understanding that $\pi,\pi_n$ are now the optimal couplings for ${\textstyle\ROT_\eps}$.
\end{remark}

\subsection{Proof strategy}\label{se:proof.strategy}

Our arguments center on the potentials $(f_*,g_*)$ and their empirical counterparts $(f_n,g_n)$. The first-order condition of optimality~\eqref{eq:FOC.pw} can be written in operator form as $\Gamma(f_*,g_*)=(1,1)$ and $\Gamma_n(f_n,g_n)=(1,1)$. In fact, for reasons discussed later, we work with closely related operators $\tilde\Gamma$ and $\tilde\Gamma_n$. Linearizing the first-order condition yields an identity of the form
\begin{equation}
    \label{eq:development1.sketch}
    \tilde\Gamma(f_n,g_n) -\tilde\Gamma_n(f_n,g_n)= \mathbb{L} (f_n-f_*,g_n-g_*) + (\dots)
\end{equation}
where $\mathbb{L}$ is a linear operator and $(\dots)$ are terms that do not cause difficulties.

The first step is to show the invertibility of $\mathbb{L}$ (\cref{pr:Invertible}). To this end, we show that $\mathbb{L}={\rm Id}+\mathbb{A}$ where $\mathbb{A}$ is a compact operator. Thus by Fredholm's alternative, \(\mathbb{L}\) is invertible if (and only if) \(-1\) is not an eigenvalue of \(\mathbb{A}\). To prove the latter, suppose that \((f, g)\) is an eigenvector corresponding to the eigenvalue \(-1\) of \(\mathbb{A}\). We first show that
\begin{equation}
    \label{IntroAlternative}
    \|(f, g)\|_\oplus = \|\mathbb{A}(f, g)\|_\oplus,
\end{equation}
which implies that \(f\) is constant on the set $ \cup_{y\in S_Q({x})}S_P({y})$ for all  $x\in \argmax |f|$, where $S_Q(x)$ and $S_P(y)$ are the sections of the set $S=\{(x,y)\in\Omega\times\Omega':\psi''(\xi_*(x,y))> 0\}$. In the case of EOT, $S$ is the full space and one immediately concludes that \(f\) is constant, completing the proof. In the general case, $S$ can be smaller (even sparse, see \cref{rk:S.and.support}) and then $\cup_{y\in S_Q({x})}S_P({y})$ does not cover the whole space $\Omega$ in general. Instead, we prove that \eqref{IntroAlternative} implies that \(f\) is constant in a neighborhood of fixed radius \(\alpha\) around the set \(\argmax |f|\). We then iterate this reasoning to show that \(f\) is constant on the whole space~$\Omega$.

Once the invertibility of $\mathbb{L}$ is shown, we focus on the central limit theorem. A high-level perspective on the proof was already given in \cref{se:methodology} based on Z-estimation. Here, we give a more pedestrian sketch. The basic idea is that if we had a central limit theorem for the left hand side of  \eqref{eq:development1.sketch}, then we could apply $\mathbb{L}^{-1}$ on both sides and deduce the desired result for  $(f_n-f_*,g_n-g_*)$. To follow this strategy, one would like to replace the left hand side $\tilde\Gamma(f_n,g_n) -\tilde\Gamma_n(f_n,g_n)$ by the more tractable expression $\tilde\Gamma(f_*, g_*) - \tilde\Gamma_n(f_*, g_*)$. Indeed, general arguments show that the latter expression satisfies a central limit theorem (\cref{lemma:CLTInHolder}). Let us proceed with that replacement, by defining the difference 
\begin{equation*}%
\Delta_n := [\tilde\Gamma - \tilde\Gamma_n](f_*, g_*) - [\tilde\Gamma  - \tilde\Gamma_n](f_n, g_n)
\end{equation*}
and hence
\begin{equation*}
    \tilde\Gamma(f_*,g_*) -\tilde\Gamma_n(f_*,g_*)= \mathbb{L} (f_n-f_*,g_n-g_*) + \Delta_n + (\dots) .
\end{equation*}
At this point we would like an estimate along the lines of $\|\Delta_n\|_{\oplus}=o\left(\|(f_*-f_n,g_*-g_n)\|_\oplus \right)$. A first attempt may be to bound $\|\Delta_n\|_{\oplus}$ in terms of
\begin{equation}\label{eq:whatOneWants.sketch}
    \|(f_* - f_n, g_* - g_n)\|_\oplus \,\sup_{\|(f,g)\|_\oplus \leq 1} \|[\tilde\Gamma_n - \tilde\Gamma](f,g)\|_\infty.
\end{equation}
However, this expression does not behave like \( o(\|(f_* - f_n, g_* - g_n)\|_\oplus) \). Indeed, the empirical measures would have to converge in total variation for the supremum over continuous functions to be of order $o(1)$. Loosely speaking, this failure arises because the unit ball of \( \Banach \) is not compact. We resolve this with a finer analysis decomposing $\Delta_n$ into two sequences, one that is valued in a compact subset of $\mathcal{C}^0$ and one whose uniform norm converges to zero almost surely (\cref{lemma:develomentIncompact}).

This leads to a key decomposition of $(f_* - f_n, g_* - g_n)$ into three sequences (\cref{pr:three.terms}): two as just described, and a third arising from the aforementioned central limit theorem in \cref{lemma:CLTInHolder}. A crucial detail is that we establish this central limit theorem not only in $\Banach$ but for a stronger norm whose unit ball is compactly embedded in the uniform topology and hence satisfies the universal Glivenko--Cantelli property, avoiding the roadblock~\eqref{eq:whatOneWants.sketch}. On the strength of the decomposition, we derive in \cref{Coro:exchangeGamman} that
  $$ \| \Delta_n\|_{\oplus}=o_{\mathbb{P}}\left(\|(f_*-f_n,g_*-g_n)\|_\oplus + n^{-\frac{1}{2}}\right).$$
Using this, the three-sequence decomposition, and the central limit theorem for two-sample $U$-statistics, \cref{se:proof.CLT.end} completes the proofs of our main results.

\section{Linearization of the Optimality Condition}\label{se:linearization}

Throughout this section, the transport cost $c\in\mathcal{C}(\Omega\times\Omega')$ is fixed and $(P,Q)$ are fixed marginals satisfying \cref{as:marginals}. (Differentiability of $c$ is not assumed.) Note that \cref{pr:ROTprelims} applies not only to those population marginals $(P,Q)$ but also to the empirical measures $(P_n,Q_n)$ of their i.i.d.\ samples. We choose continuous versions of the potentials as detailed in  \cref{pr:ROTprelims}. While at this point we do not yet know the uniqueness of the population potentials, we may choose and fix one such pair $(f_*,g_*)$. By~\eqref{eq:FOC.pw}, the potentials $(f_*,g_*)\in\mathcal{C}(\Omega)\times\mathcal{C}(\Omega')$ are a solution of the equation $\Gamma(f_*,g_*)=(1 ,1 )$, where
\begin{align*}
    \Gamma: \Banach   &\to  \mathcal{C}(\Omega)\times \mathcal{C}(\Omega'), \\
   (f,g) &\mapsto  \left( \begin{array}{c}\Gamma^{(1)}(f,g)
 \\  \Gamma^{(2)}(f,g)\end{array}\right)=\left( \begin{array}{c}
         \int \psi' (f(\cdot)+g(y)-c(\cdot,y)) dQ(y) \\
         \int \psi'(f(x)+g(\cdot)-c(x,\cdot))  dP(x)
   \end{array}\right).
\end{align*}
Similarly, the empirical potentials $(f_n,g_n)$  are a solution of $\Gamma_n(f_n,g_n)=(1,1)$, where
   \begin{align*}
    \Gamma_n: \Banach   &\to  \mathcal{C}(\Omega)\times \mathcal{C}(\Omega'), \\
   (f,g) &\mapsto \left( \begin{array}{c}\Gamma^{(1)}_n(f,g)
 \\  \Gamma^{(2)}_n(f,g)\end{array}\right)=\left( \begin{array}{c}
   \int \psi'(f(\cdot)+g(y)-c(\cdot,y)) dQ_n(y) \\
        \int \psi'(f(x)+g(\cdot)-c(x,\cdot)) dP_n(x)
   \end{array}\right).
\end{align*}
As mentioned in the preceding section, we choose and fix empirical potentials $(f_n,g_n)$ that are continuous on $\Omega\times\Omega'$ and depend measurably on $\omega\in\boldsymbol{\Omega}$.

Recall the shorthand $\xi_*(x,y)= f_*(x)+ g_*(y)-c(x,y)$ from~\eqref{eq:xi.def}. The following lemma details the first-order development of the operator $$\tilde\Gamma:\Banach\to\mathcal{C}(\Omega)\times \mathcal{C}(\Omega'), \qquad (f,g)\mapsto\tilde\Gamma(f,g) := \left( \begin{array}{c}\frac{\Gamma^{(1)}}{\int \psi''(\xi_*(\cdot,y))  dQ(y)}
 \\  \frac{\Gamma^{(2)}}{\int \psi''(\xi_*(x,\cdot))dP(x)}\end{array}\right).
 $$ 
Here the denominators are bounded away from zero by \cref{pr:ROTprelims}. We introduce those denominators so that the derivative $\mathbb{L}$ (see~\eqref{eq:L} below) becomes an operator $\Banach\to\Banach$ that is a perturbation of the identity. (A priori, these operators could depend on the chosen and fixed potentials $(f_*,g_*)$, which causes no harm. Once the proof of uniqueness is complete, it will be clear that there was in fact no ambiguity.)

\begin{lemma}\label{Lemma:Frechet}
    The operator $\tilde\Gamma:\Banach\to\mathcal{C}(\Omega)\times \mathcal{C}(\Omega')$ is continuously Fr\'{e}chet differentiable; we denote its derivative at the point $(f, g)\in\Banach$ by $\mathbb{L}_{(f, g)}$. That is, we have
    $$ \lim_{{\|(u,v)\|_{\oplus}}\to 0}\frac{ \| \tilde\Gamma(f+u,g+v)-\tilde\Gamma(f,g)-\mathbb{L}_{(f,g)}(u,v)\|_{\mathcal{C}(\Omega)\times \mathcal{C}(\Omega')} }{\|(u,v)\|_{\oplus}} =0 $$
    and the function $\Banach\ni (f,g) \mapsto \mathbb{L}_{(f,g)} \in \mathcal{L}(\Banach,\mathcal{C}(\Omega)\times \mathcal{C}(\Omega')) $ is continuous. The derivative at $(f_*,g_*)$ is given by 
        \begin{align*}
    	\mathbb{L}_{(f_*,g_*)}: \Banach   &\to  \mathcal{C}(\Omega)\times \mathcal{C}(\Omega'), \qquad
    	(u,v) \mapsto \left( \begin{array}{c}
    		u(\cdot) +  \frac{\int \psi''(f_*(\cdot)+g_*(y)-c(\cdot,y)) v(y) dQ(y)}{\int \psi''(\xi_*(\cdot,y)) dQ(y)} \\
    		v(\cdot) + \frac{\int \psi''((f_*(x)+g_*(\cdot)-c(x,\cdot))  u(x) dP(x)}{\int \psi''(\xi_*(x,\cdot))  dP(x)}
    	\end{array}\right).
    \end{align*} 
\end{lemma}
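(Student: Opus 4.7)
The plan is to verify Fréchet differentiability by applying a first-order Taylor expansion to $\psi'$ and controlling the remainder uniformly via the modulus of continuity of $\psi''$ on a compact interval. All relevant values lie in such an interval because $(f_*,g_*)$ and the representatives $(f,g)$ are uniformly bounded by \cref{pr:ROTprelims}(vi), while perturbations $(u,v)$ are small by hypothesis.

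Before differentiating, I first verify that $\tilde\Gamma$ is well-defined as a map $\Banach\to\mathcal{C}(\Omega)\times\mathcal{C}(\Omega')$: since $\psi'(f(x)+g(y)-c(x,y))$ depends on $(f,g)$ only through $f\oplus g$, the numerators $\Gamma^{(i)}$ factor through $\Banach$; continuity in $x$ (resp.\ $y$) follows from continuity of $\psi'$ together with dominated convergence. The denominators $\int\psi''(\xi_*(\cdot,y))\,dQ(y)$ and $\int\psi''(\xi_*(x,\cdot))\,dP(x)$ are fixed, continuous by the same argument, and bounded below by some $\delta>0$ by \cref{pr:ROTprelims}(vii).

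The main step is the Fréchet differentiability. Fix a representative $(f,g)\in\mathcal{C}(\Omega)\times\mathcal{C}(\Omega')$ and a perturbation $(u,v)$. Since $\psi''\in\mathcal{C}(\RR)$ is uniformly continuous on a compact interval $I$ containing $\xi_{(f,g)}(x,y)+\lambda(u(x)+v(y))$ for all $(x,y)\in\Omega\times\Omega'$ and $\lambda\in[0,1]$ (for $\|(u,v)\|_\oplus$ small), it admits a modulus of continuity $\omega$ on $I$. Plugging $t=\xi_{(f,g)}(x,y)$ and $h=u(x)+v(y)$ into
\[
\psi'(t+h)-\psi'(t)=\psi''(t)h+\int_0^1\bigl(\psi''(t+\lambda h)-\psi''(t)\bigr)h\,d\lambda,
\]
integrating against $Q$ in $y$ and dividing by the denominator yields
\[
\tilde\Gamma^{(1)}(f+u,g+v)-\tilde\Gamma^{(1)}(f,g)=\mathbb{L}^{(1)}_{(f,g)}(u,v)+R^{(1)},
\]
with
\[
\mathbb{L}^{(1)}_{(f,g)}(u,v)(\cdot)=\frac{\int\psi''(\xi_{(f,g)}(\cdot,y))\bigl(u(\cdot)+v(y)\bigr)\,dQ(y)}{\int\psi''(\xi_*(\cdot,y))\,dQ(y)}
\]
and $\|R^{(1)}\|_\infty\leq\delta^{-1}\omega(\|u\oplus v\|_\infty)\|u\oplus v\|_\infty$. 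The bound $\|u\oplus v\|_\infty\leq\|(u,v)\|_\oplus$ (evident from $u\oplus v=(u-a)\oplus(v+a)$ for any $a\in\RR$) and $\omega(s)\to0$ give $\|R^{(1)}\|_\infty=o(\|(u,v)\|_\oplus)$. The same argument applies to the second component, so Fréchet differentiability holds with derivative $\mathbb{L}_{(f,g)}$. Continuity of $(f,g)\mapsto\mathbb{L}_{(f,g)}$ in $\mathcal{L}(\Banach,\mathcal{C}(\Omega)\times\mathcal{C}(\Omega'))$ follows because $(f_k,g_k)\to(f,g)$ in $\Banach$ permits a choice of representatives with $\xi_{(f_k,g_k)}\to\xi_{(f,g)}$ uniformly, hence $\psi''(\xi_{(f_k,g_k)})\to\psi''(\xi_{(f,g)})$ uniformly on $I$. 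Specialising to $(f,g)=(f_*,g_*)$, where $\xi_{(f_*,g_*)}=\xi_*$, the numerator splits into $u(\cdot)\int\psi''(\xi_*(\cdot,y))\,dQ(y)+\int\psi''(\xi_*(\cdot,y))v(y)\,dQ(y)$, yielding exactly the claimed formula.

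The proof is technically straightforward given that $\psi\in\mathcal{C}^2$ and the potentials are uniformly bounded; the only mild obstacle is keeping track of the quotient structure $\Banach$. One needs to confirm that $\mathbb{L}_{(f_*,g_*)}(u,v)$ is invariant under the shift $(u,v)\mapsto(u+a,v-a)$ so that the operator factors through $\Banach$. This holds because the weight $\psi''(\xi_*(\cdot,y))\,dQ(y)/\int\psi''(\xi_*(\cdot,y))\,dQ(y)$ is a probability measure on $\Omega'$ for each $x$, so averaging $v-a$ against it returns $-a$ plus the average of $v$, cancelling the $+a$ contributed by the first component; the same holds symmetrically for the second component.
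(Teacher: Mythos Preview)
Your proposal is correct and is precisely the ``direct calculation'' that the paper's proof alludes to in a single sentence (``Using that $\psi'$ is $\mathcal{C}^{1}$, the proof is a direct calculation''); you have simply spelled out the Taylor expansion of $\psi'$, the uniform remainder control via the modulus of continuity of $\psi''$ on a compact interval, and the well-definedness on the quotient $\Banach$. One minor remark: your appeal to \cref{pr:ROTprelims}(vi) for the boundedness of a general representative $(f,g)$ is not quite the right citation---that item bounds \emph{potentials}, whereas for arbitrary $(f,g)\in\Banach$ boundedness of a representative follows simply because continuous functions on the compacta $\Omega,\Omega'$ are bounded---but this does not affect the argument.
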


\begin{proof}
  Using that $\psi'$ is $\mathcal{C}^{1}$, the proof is a direct calculation.
\end{proof}

Composing with the quotient map, we define the operator $\mathbb{L}\in\mathcal{L}(\Banach,\Banach)$ by
\begin{align}\label{eq:L}
\mathbb{L}: (u,v)\mapsto  \left[\mathbb{L}_{(f_*,g_*)}(u,v)\right]_\oplus.
\end{align}
The main goal of this section is to show that $\mathbb{L}$ is invertible. We first state an auxiliary result on the sections of the set $S:=\{(x,y)\in\Omega\times\Omega':\psi''(\xi_*(x,y))> 0\}$ whose interpretation is discussed in \cref{rk:S.and.support} below. The proof is reported in \cref{se:proofsLinearization}.

\begin{lemma}\label{Lemma:aBallInside}
Consider the sections of  $S=\{(x,y)\in\Omega\times\Omega':\psi''(\xi_*(x,y))> 0\}$,
    $$
    S_Q(x) := \{y\in \Omega':\  \psi''(\xi_*(x,y))> 0\}, \qquad 
    S_P(y) := \{x\in \Omega:\  \psi''(\xi_*(x,y))> 0 \}.
    $$
    There exists  $\alpha>0$ such that
    $$ (x+\alpha\, \overline{\mathbb{B}})\cap \Omega\subset \cup_{y\in S_Q({x})}S_P({y}) \quad \text{for all } x\in \Omega.$$
\end{lemma}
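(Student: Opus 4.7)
The plan is to exploit the uniform integral lower bound from Proposition~2.3(vii): there exists $\delta > 0$ such that $\int \psi''(\xi_*(x,\cdot))\, dQ \geq \delta$ for every $x \in \Omega$. Since $\psi''(\xi_*(x,\cdot))$ is nonnegative and continuous on the compact support $\Omega'$ of $Q$, this bound forces
$$\sup_{y \in \Omega'} \psi''(\xi_*(x,y)) \geq \int \psi''(\xi_*(x,y))\, dQ(y) \geq \delta,$$
and the supremum is attained at some $y^*(x) \in \Omega'$ by compactness. By construction $y^*(x) \in S_Q(x)$, so every $x$-fiber of $S$ contains a point where the value of $\psi''\circ\xi_*$ is at least~$\delta$. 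The point is that this lower bound $\delta$ does not depend on $x$.

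Next, I would invoke uniform continuity: the map $\Psi:(x,y)\mapsto \psi''(\xi_*(x,y))$ is continuous on the compact set $\Omega\times\Omega'$ (since $\psi''\in\mathcal{C}(\mathbb{R})$ by Assumption~2.1, and $\xi_*$ is continuous by Proposition~2.3(vi)), hence uniformly continuous with some modulus $\omega$. Choosing $\alpha>0$ small enough that $\omega(\alpha)<\delta/2$, I would then verify the desired inclusion directly: for any $x\in\Omega$ and any $x'\in (x+\alpha\,\overline{\mathbb{B}})\cap\Omega$,
$$\Psi(x', y^*(x)) \;\geq\; \Psi(x,y^*(x)) - \omega(\|x'-x\|) \;\geq\; \delta - \omega(\alpha) \;>\; \delta/2 \;>\; 0,$$
which gives $x' \in S_P(y^*(x))$. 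Combined with $y^*(x)\in S_Q(x)$ this yields $x'\in \bigcup_{y\in S_Q(x)} S_P(y)$, proving the claimed containment with the single constant $\alpha$.

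No step here looks genuinely hard; the main thing to check is simply that the choice of $\alpha$ is uniform in $x$, which it is because both ingredients, the lower bound $\delta$ from Proposition~2.3(vii) and the modulus $\omega$ coming from uniform continuity on the joint compact $\Omega\times\Omega'$, are global. In particular, we never need to control how the selector $y^*(x)$ varies with $x$ (it need not even be continuous): the argument uses only the value of $\Psi$ at the pair $(x',y^*(x))$ with $x'$ close to $x$.
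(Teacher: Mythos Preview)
Your proof is correct and takes a genuinely different route from the paper's. The paper argues topologically: it shows that the set $O(x):=\cup_{y\in S_Q(x)}S_P(y)$ is relatively open and contains $x$, then considers the closed ``bad'' set $A=\{(x,z)\in\Omega\times\Omega: z\notin O(x)\}$, verifies that $\cap_{r>0}\big(A\cap\{(x,z):\|x-z\|\leq r\}\big)=\emptyset$, and invokes the finite intersection property on the compact $\Omega\times\Omega$ to conclude that some $A_r$ is already empty. By contrast, you exploit the \emph{quantitative} content of Proposition~2.3(vii): the uniform lower bound $\delta$ on $\int\psi''(\xi_*(x,\cdot))\,dQ$ produces, for each $x$, a point $y^*(x)$ with $\Psi(x,y^*(x))\geq\delta$, and then uniform continuity of $\Psi$ on the compact $\Omega\times\Omega'$ immediately gives a uniform $\alpha$. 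Your argument is more elementary and more constructive---it yields an explicit $\alpha$ in terms of $\delta$ and the modulus of continuity of $\psi''\circ\xi_*$---whereas the paper's compactness argument is nonconstructive but uses Proposition~2.3(vii) only qualitatively (nonemptiness of the sections). Your observation that the selector $y^*(x)$ need not be continuous is well placed: indeed the argument only evaluates $\Psi$ at $(x',y^*(x))$ with the \emph{same} second coordinate.
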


\begin{remark}\label{rk:S.and.support}
The set $S=\{(x,y)\in\Omega\times\Omega':\psi''(\xi_*(x,y))> 0\}$ is closely related to the support of the optimal coupling $\pi$. Indeed, $\psi'\circ \xi_*$ is the density of $\pi$ by \cref{pr:ROTprelims}. For the KL and polynomial divergences detailed in \cref{ex:divergences}, $\{t:\psi'(t)>0\}=\{t:\psi''(t)>0\}$, hence $S$ is exactly the set where the density is positive, and its closure is the support of $\pi$. For the KL divergence of EOT, $\psi''>0$ on all of~$\mathbb{R}$ and thus $S=\Omega\times\Omega'$, so that \cref{Lemma:aBallInside} is trivial. For the polynomial divergences, however, it is known that the support is often sparse (see \cref{se:literature}), and then \cref{Lemma:aBallInside} is relevant.
\end{remark}

The formula in \cref{Lemma:Frechet} suggests to see $\mathbb{L}={\rm Id}+\mathbb{A}$ as a perturbation of the identity ${\rm Id}\in\mathcal{L}(\Banach,\Banach)$. To prove the invertibility of $\mathbb{L}$, we will show that the operator \begin{align}\label{eq:operatorA}
    \mathbb{A}=(\mathbb{A}_1,\mathbb{A}_2): \Banach   \to  \Banach, \qquad 
   (f,g) \mapsto\left( \begin{array}{c}
          \frac{\int \psi''(\xi_*(\cdot,y))  g(y) dQ(y)}{\int \psi''(\xi_*(\cdot,y))  dQ(y)} \\
    \frac{\int \psi''(\xi_*(x,\cdot)) f(x) dP(x)}{\int \psi''(\xi_*(x,\cdot))  dP(x)}
   \end{array}\right)
\end{align}
is compact and $-1$ is not an eigenvalue. Then, we conclude by the Fredholm alternative. While similar results are known for EOT~\cite{Carlier.Laborde.SIMA.2020,GonzalezSanz.2024.weaklimits}, we require a substantially different proof because our optimal coupling need not have full support.

\begin{proposition}\label{pr:Invertible}
The operator $\mathbb{L}\in\mathcal{L}(\Banach,\Banach)$ admits an inverse $\mathbb{L}^{-1}\in\mathcal{L}(\Banach,\Banach)$.
\end{proposition}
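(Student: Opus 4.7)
Write $\mathbb{L} = I + \mathbb{A}$ with $\mathbb{A}$ as in \eqref{eq:operatorA}. The strategy is to invoke the Fredholm alternative: if $\mathbb{A}\in\mathcal{L}(\Banach,\Banach)$ is compact, then $\mathbb{L}$ is invertible iff it is injective. Compactness of $\mathbb{A}$ is the easy half. Each of the components $\mathbb{A}_1, \mathbb{A}_2$ is an integral operator against the continuous, uniformly bounded kernel $\psi''(\xi_*(x,y))/Z(x)$ (or the analogous normalized kernel), where the denominator is bounded away from zero by \cref{pr:ROTprelims}(vii). Standard Arzel\`a--Ascoli therefore shows $\mathbb{A}_1:\mathcal{C}(\Omega')\to\mathcal{C}(\Omega)$ and $\mathbb{A}_2:\mathcal{C}(\Omega)\to\mathcal{C}(\Omega')$ are compact; since $\mathbb{A}$ preserves the equivalence $\sim_\oplus$ (it maps $(f+a,g-a)$ to $(\mathbb{A}_1 g-a,\mathbb{A}_2 f+a)$), choosing bounded representatives shows $\mathbb{A}$ is compact on $\Banach$.

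The content lies in injectivity. Suppose $\mathbb{L}[(f,g)]_\oplus=0$; lifted to $\mathcal{C}(\Omega)\times\mathcal{C}(\Omega')$ this becomes the system
\[
f(x)+\mathbb{A}_1 g(x)=c,\qquad g(y)+\mathbb{A}_2 f(y)=-c
\]
for some $c\in\mathbb{R}$. The first step is to argue $c=0$. Because $\mathbb{A}_1,\mathbb{A}_2$ are pointwise averages (they send constants to the same constants and satisfy $\min\leq\mathbb{A}_i\leq\max$), evaluating the two equations at extrema of $f$ and $g$ and chaining the resulting inequalities forces $c\leq 0$ and $c\geq 0$, hence $c=0$, as well as $\|f\|_\infty=\|g\|_\infty=:M$.

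The second step is the rigidity argument sketched in \cref{se:proof.strategy}. Assume $M>0$ and pick $x^*\in\argmax|f|$ with $\sigma:=\sgn f(x^*)$. From $f(x^*)=-\mathbb{A}_1 g(x^*)$ we get
\[
M=|\mathbb{A}_1 g(x^*)|\leq \int \frac{\psi''(\xi_*(x^*,y))}{Z(x^*)}|g(y)|\,dQ(y)\leq\|g\|_\infty=M,
\]
and the equality case of this Jensen-type inequality forces $g\equiv -\sigma M$ $Q$-a.e.\ on $S_Q(x^*)$; continuity of $g$ and the fact that $\mathrm{supp}(Q)=\Omega'$ promote this to pointwise equality on $S_Q(x^*)$. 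Repeating the argument for any $y\in S_Q(x^*)$, now starting from $\mathbb{A}_2 f(y)=-g(y)=\sigma M$, yields $f\equiv\sigma M$ on $S_P(y)$. Combining over $y$ and applying \cref{Lemma:aBallInside} gives
\[
f\equiv \sigma M\quad\text{on }(x^*+\alpha\overline{\mathbb{B}})\cap\Omega.
\]

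The final step is the propagation. The level set $A:=\{x\in\Omega:f(x)=\sigma M\}$ is closed by continuity. Any point of $A$ is again in $\argmax|f|$ with the same sign, so the previous paragraph, applied at that point, shows $A$ contains a ball of radius $\alpha$ around it; thus $A$ is open. Since $\Omega$ is connected by \cref{as:marginals} and $A\ni x^*$ is non-empty, $A=\Omega$, i.e.\ $f\equiv \sigma M$. Then $g=-\mathbb{A}_2 f\equiv -\sigma M$, so $(f,g)=(\sigma M,-\sigma M)$ represents $0$ in $\Banach$. Hence $\mathbb{L}$ is injective, and by the Fredholm alternative, invertible with continuous inverse. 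I expect the main obstacle to be the careful Jensen-equality analysis together with verifying that $\mathrm{supp}(Q)=\Omega'$ lets one convert $Q$-a.e.\ equalities into pointwise ones on the open sections $S_Q(x^*)$; without this detail, the application of \cref{Lemma:aBallInside} and the ensuing clopen argument would not go through.
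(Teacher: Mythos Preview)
Your proposal is correct and follows essentially the same route as the paper: compactness of $\mathbb{A}$ via Arzel\`a--Ascoli, then injectivity by a maximum-principle/equality-case argument on the sections $S_Q(x),S_P(y)$, invoking \cref{Lemma:aBallInside} and the connectedness of $\Omega$ to propagate. Your tactical choices differ only cosmetically---you deduce $c=0$ by chaining extremal inequalities whereas the paper integrates both equations against $P\otimes Q$, and you run a clopen argument where the paper builds an explicit $\alpha/2$-chain---but the structure and key ingredients are identical.
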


\begin{proof}

We first show that the operator $\mathbb{A}$ of \eqref{eq:operatorA} is compact. We  need to show that if $\{(u_n, v_n)\}_{n\in \mathbb{N}}$ is bounded in $\Banach$ then 
$\{(\mathbb{A}_1u_n, \mathbb{A}_2 v_n)\}_{n\in \mathbb{N}}$ is relatively compact in $\Banach$. Since $\{(u_n, v_n)\}$ is bounded in $\Banach$, there exists a sequence $\{a_n\}$ of real numbers such that 
 $ \|u_n-a_n\|_{\infty}  $ and 
 $ \|v_n+a_n\|_{\infty}  $ are bounded. A sufficient condition for the  relative compactness of  $\{(\mathbb{A}_1 v_n, \mathbb{A}_2 u_n)\}$ in $\Banach$ is that  $\{\mathbb{A}_1(v_n+a_n)\}$ and $\{\mathbb{A}_2(u_n-a_n)\}$ are relatively  compact in $\mathcal{C}(\Omega)$
and  $\mathcal{C}(\Omega')$, respectively. We only prove that $\{\mathbb{A}_2(u_n-a_n)\}$ is  relatively  compact, the former is analogous. By the Arzel\`{a}--Ascoli theorem, it suffices to show equicontinuity. As the function
$ \left(\int \psi''(\xi_*(x,\cdot))  dP(x)\right)^{-1} $
is continuous by \cref{pr:ROTprelims}, and since the product of an equicontinuous sequence with a continuous function (on the compact space $\Omega'$) remains equicontinuous, it moreover suffices to show that 
\begin{equation}\label{eq:A.compact.prof.equi}
  \int \psi''(\xi_*(x,\cdot)) (u_n(x)-a_n ) dP(x), \quad n\in \mathbb{N} 
\end{equation}
is equicontinuous. Indeed, let $\rho$ be a modulus of  continuity for $\int \psi''(\xi_*(x,\cdot)) dP(x)$ and $C=\sup_n \|u_n - a_n\|_\infty$, then $C\rho$ is a modulus of continuity for the functions in~\eqref{eq:A.compact.prof.equi}. This completes the proof that $\mathbb{A}$ is compact.

Next, we show that $-1$ is not an eigenvalue of $\mathbb{A}$. Suppose that
\begin{equation}\label{eq:lem45}
    f + a=-\mathbb{A}_1(g) \quad {\rm and}\quad 
        g - a=-\mathbb{A}_2(f)  
\end{equation}
for some  $(f,g)\in \mathcal{C}(\Omega)\times\mathcal{C}(\Omega')$ and $a \in \mathbb{R}$; we need to prove that $[(f, g)]_{\oplus} = [(0, 0)]_{\oplus}$. As a first step, we show that $a = 0$ must hold in \eqref{eq:lem45}. Using \eqref{eq:lem45} together with the definition of~$\mathbb{A}_2$,  we have
\begin{equation}\label{eq:ToCancel-c-1}
	{\int \psi''(\xi_*(x,y)) (f(x)+  g(y)) dQ(y)} =-a{\int \psi''(\xi_*(x,y))  dQ(y)}, \quad \text{for all }x\in \Omega
\end{equation}
and similarly by the definition of $\mathbb{A}_1$,
\begin{equation}\label{eq:ToCancel-c-2}
	{\int \psi''(\xi_*(x,y)) (f(x)+  g(y)) dP(x)} =a{\int \psi''(\xi_*(x,y))  dP(x)}, \quad \text{for all }y\in \Omega'.
\end{equation}
Integrating in \eqref{eq:ToCancel-c-1}  and \eqref{eq:ToCancel-c-2} w.r.t.\ $P$ and $Q$, respectively, we get  
$$ a{\int \psi''(\xi_*(x,y))  d(P\otimes Q)(y)}=0, $$
which implies $a=0$. Hence, \eqref{eq:lem45} simplifies to 
$f=-\mathbb{A}_1(g)$ and $g=-\mathbb{A}_2(f), 
$ which we can concatenate to deduce
\begin{equation}\label{eq:concatA1A2}
  f=\mathbb{A}_1 \mathbb{A}_2(f).
\end{equation}

Next, we claim that for any $x\in \argmax |f|$, $f$ is constant on the set $\cup_{y\in S_Q({x})}S_P({y})\subset\Omega$. %
Indeed, taking norms on both sides of ~\eqref{eq:concatA1A2} yields 
$$ \|f\|_{\infty}=\| \mathbb{A}_1 \mathbb{A}_2(f) \|_{\infty}.$$
It then suffices to show that, for any $h\in\mathcal{C}(\Omega)$, 
$$ \| \mathbb{A}_1 \mathbb{A}_2(h) \|_{\infty} < \|h\|_{\infty} $$
 unless $h$ is constant on the set $\cup_{y\in S_Q({x})}S_P({y}) $ for all $x\in \argmax |h|$.  To see the latter, fix $x_*\in \argmax |h|$ and define the probability measure 
$$ A\mapsto \mu_{x_*}(A)= \frac{\int \psi''(\xi_*(x_*,y))  \frac{\int_A \psi''(\xi_*(x,y))  dP(x)}{\int \psi''(\xi_*(x,y))  dP(x)} dQ(y)}{\int \psi''(\xi_*(x_*,y))  dQ(y)}$$
on $\Omega$. Then $\int h(x) d\mu_{x_*}(x)=\mathbb{A}_1 \mathbb{A}_2(h)(x_*)$ for any $h\in\mathcal{C}(\Omega)$. In particular, using also~\eqref{eq:concatA1A2},  $$\|f\|_\infty= |f(x_*)| =|\mathbb{A}_1 \mathbb{A}_2(f)(x_*)|= \left| \int f(x) d\mu_{x_*}(x)  \right|.$$
As $f$ is continuous, this implies that  $f(x)=\|f\|_\infty$ for all $x\in {\rm supp}(\mu_{x_*})$, the support  of $\mu_{x_*}$. Observing that ${\rm supp}(\mu_{x_*})$ contains the set $\bigcup_{y\in S_Q({x_*})}S_P({y})$ completes the proof of the claim that $f$ is constant on the set $\cup_{y\in S_Q({x})}S_P({y}) $ for all $x\in \argmax |f|$.

Next, we show that $f = \|f\|_\infty$ on $\Omega$. By  \cref{Lemma:aBallInside} there is $\alpha>0$ such that $$ (x+\alpha \overline{\mathbb{B}})\cap \Omega\subset \cup_{y\in S_Q({x})}S_P({y}) \quad \text{for all }  x\in \Omega.$$ Fix $x_*\in \argmax |f|$. The above then implies $f=\|f\|_\infty$ on  $(x_*+\alpha \overline{\mathbb{B}})\cap \Omega$. We iterate this argument to prove that $f = \|f\|_\infty$ on $\Omega$. Indeed, let $\bar{x} \in \Omega$ be arbitrary; we show that $f(\bar{x}) = \|f\|_\infty$. As $\Omega$ is compact and connected, there exist $N \in \mathbb{N}$ and $x_1, x_2, \dots, x_N \in \Omega$ with $x_1 = x_*$, $x_N = \bar{x}$ and $|x_i - x_{i+1}| \leq \alpha/2$ (e.g., such $x_i$ arise from an open cover of $\Omega$ by balls of radius $\alpha/2$). We know $f(x_1) = \|f\|_\infty$ and the above argument shows that if $f(x_i)=\|f\|_\infty$ then also $f = \|f\|_\infty$ on $(x_i+\alpha \overline{\mathbb{B}})\cap \Omega$, which in particular yields $f(x_{i+1}) = \|f\|_\infty$. Inductively, we obtain $f(\bar{x}) = f(x_N) = f(x_1)=\|f\|_\infty$. This completes the proof that $f = \|f\|_\infty$ on $\Omega$. 

Using the fact that $g=-\mathbb{A}_2(f)$, we derive that $\|g\|_\infty=-\|f\|_\infty$, implying  that $(f, g) \sim_{\oplus} (0, 0)$. In summary, we have shown that $-1$ is not an eigenvalue of $\mathbb{A}$, and hence that the linear operator $\mathbb{L}={\rm Id} + \mathbb{A}$ is injective. By Fredholm's alternative, it follows that $\mathbb{L}:\Banach\to\Banach$ is also surjective. As $\Banach$ is a Banach space, the inverse is necessarily continuous, completing the proof. 
\end{proof}

As a by-product of the invertibility of $\mathbb{L}$, we obtain the uniqueness of the potentials.

 \begin{proof}[Proof of \cref{th:uniqueness}]
 Let $(f_*,g_*)$ be as above and let $(\tilde{f}_*,\tilde{g}_*)$ be another solution of the first-order condition~\eqref{eq:FOC.pw}. We first note that any convex combination 
 $(\lambda \tilde{f}_*+(1-\lambda) f_*,\lambda \tilde{g}_*+(1-\lambda) g_*)$, for $\lambda\in (0,1)$, is again a solution of \eqref{eq:FOC.pw}. Indeed, by \cref{pr:ROTprelims}, the set of solutions of \eqref{eq:FOC.pw} is the set of (continuous) optimizers of the dual problem \eqref{eq:dual1}, and the latter set is convex since \eqref{eq:dual1} is a concave maximization problem. We can now use \cref{Lemma:Frechet} to derive
 $$ 0= \frac{d}{d\lambda}\bigg\vert_{\lambda=0}\tilde{\Gamma}(\lambda \tilde{f}_*+(1-\lambda) f_*,\lambda \tilde{g}_*+(1-\lambda) g_*)= \mathbb{L}( \tilde{f}_*- f_*, \tilde{g}_*- g_*).$$
As $\mathbb{L}$ is invertible by \cref{pr:Invertible}, it follows that $( \tilde{f}_*- f_*, \tilde{g}_*- g_*)=0$ in $\Banach$.
 \end{proof}
 
\section{Proofs of the Central Limit Theorems}\label{se:proofs.of.CLTs}

 \subsection{Consistency of the Potentials}\label{se:consistency}
 
 We first state the consistency of the empirical potentials towards the population counterpart. 

\begin{lemma}\label{lemma.A.s}   
    For each $n\in \mathbb{N}$, let $(f_n,g_n)\in \Banach$ be any solution of $\Gamma_n(f_n,g_n)=(1,1)$. Then 
    $$ \|(f_n,g_n)-(f_*,g_*)\|_{\oplus}\xrightarrow{\text{a.s.}} 0 .$$
\end{lemma}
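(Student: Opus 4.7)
The plan is to derive consistency by an Arzel\`a--Ascoli compactness argument combined with the uniqueness result of \cref{th:uniqueness}. First, I would apply \cref{pr:ROTprelims}\,(vi) to the empirical marginals $(P_n,Q_n)$ (compactly supported in $\Omega$ and $\Omega'$) to pick representatives $(f_n,g_n)$ satisfying the empirical first-order condition pointwise with $\|f_n\oplus g_n\|_\infty\leq 5\|c\|_\infty+(\psi')^{-1}(1)$ and both $f_n$ and $g_n$ uniformly continuous with the cost-modulus $\rho$. Fix $y_0\in\Omega'$ and replace the representative by $(f_n+g_n(y_0),\,g_n-g_n(y_0))$, which leaves the equivalence class in $\Banach$ invariant but ensures $g_n(y_0)=0$; this gives $\|g_n\|_\infty\leq\rho(\diam(\Omega'))$ and $\|f_n\|_\infty$ bounded by a constant independent of $n$ and $\omega$. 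Hence $(f_n)_n$ and $(g_n)_n$ are uniformly bounded and equicontinuous on the compacta $\Omega,\Omega'$, so by Arzel\`a--Ascoli, for almost every $\omega\in\boldsymbol{\Omega}$, any subsequence admits a further subsequence $(f_{n_k},g_{n_k})\to(\bar f,\bar g)$ uniformly.

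Next, I would show that $(\bar f,\bar g)$ is a population potential, i.e.\ $\Gamma(\bar f,\bar g)=(1,1)$. For each $x\in\Omega$, decompose
\begin{align*}
\Gamma^{(1)}_{n_k}(f_{n_k},g_{n_k})(x)-\Gamma^{(1)}(\bar f,\bar g)(x)
&=\int\bigl[\psi'(\xi_{n_k}(x,y))-\psi'(\bar f(x)+\bar g(y)-c(x,y))\bigr]\,dQ_{n_k}(y)\\
&\quad +\int\psi'(\bar f(x)+\bar g(y)-c(x,y))\,d(Q_{n_k}-Q)(y).
\end{align*}
The first integral is $O(\|f_{n_k}-\bar f\|_\infty+\|g_{n_k}-\bar g\|_\infty)$ uniformly in~$x$, since $\psi'$ is locally Lipschitz (by \cref{as:divergence}) and the arguments $\xi_{n_k}$ and $\bar f\oplus\bar g-c$ are uniformly bounded. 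For the second integral, the family $\{y\mapsto\psi'(\bar f(x)+\bar g(y)-c(x,y)):x\in\Omega\}$ is uniformly bounded and equicontinuous, hence totally bounded in $\mathcal{C}(\Omega')$; combined with Varadarajan's theorem ($Q_{n_k}\to Q$ weakly a.s.), a finite $\varepsilon$-net argument shows that this integral vanishes uniformly in $x$ a.s. Since $\Gamma^{(1)}_{n_k}(f_{n_k},g_{n_k})\equiv 1$, we obtain $\Gamma^{(1)}(\bar f,\bar g)\equiv 1$, and the analogous argument yields $\Gamma^{(2)}(\bar f,\bar g)\equiv 1$.

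Finally, \cref{th:uniqueness} forces $[(\bar f,\bar g)]_\oplus=[(f_*,g_*)]_\oplus$ in $\Banach$. Since this identifies the limit of every a.s.-convergent subsequence, a standard subsequence argument gives $[(f_n,g_n)]_\oplus\to[(f_*,g_*)]_\oplus$ in $\Banach$ almost surely. The main technical obstacle is passing to the limit in the first-order condition, which mixes two sources of randomness (the random marginals $Q_n$ and the random integrands $\psi'\circ\xi_n$); the cost-only equicontinuity provided by \cref{pr:ROTprelims}\,(vi) is the key ingredient that turns weak convergence of the empirical measures into the uniform convergence in~$x$ needed to identify the limit.
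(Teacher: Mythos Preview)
Your proposal is correct and follows essentially the same route as the paper: uniform bounds and equicontinuity from \cref{pr:ROTprelims}\,(vi), Arzel\`a--Ascoli compactness, passing to the limit in the first-order condition, identification of the limit via \cref{th:uniqueness}, and a subsequence argument. The only minor difference is in how the limit is passed: the paper writes $\Gamma(f_n,g_n)-(1,1)=\Gamma(f_n,g_n)-\Gamma_n(f_n,g_n)$ and controls this in one step via the Glivenko--Cantelli theorem over the Lipschitz unit ball (using $c\in\mathcal{C}^1$), whereas you split into a term with fixed integrand against $d(Q_{n_k}-Q)$ and a term with varying integrand against $dQ_{n_k}$, handling the former by Varadarajan plus total boundedness---this variant has the mild advantage of needing only continuity of $c$.
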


\begin{proof}%
Recall that empirical quantities such as $P_{n},f_{n},\dots$ implicitly depend on the realization 
$X_1(\omega), \dots, X_n(\omega), Y_1(\omega), \dots, Y_n(\omega)$ of the sample. 
To make this dependence explicit, we add a superscript $\omega$. As $X_1, \dots, X_n, Y_1, \dots, Y_n$ are i.i.d., the Glivenko--Cantelli theorem implies that there is a set $\boldsymbol{\Omega}_1\subset\boldsymbol{\Omega}$ with $\mathbb{P}(\boldsymbol{\Omega}_1)=1$ such that for all $\omega\in\boldsymbol{\Omega}_1$,
$$ \sup_{\|f\|_{0,1}\leq 1}\left\vert \int f d(P^{\omega}_n-P) \right\vert \to0\quad {\rm and} \quad \sup_{\|g\|_{0,1}\leq 1}\left\vert \int g d(Q^{\omega}_n-Q) \right\vert \to 0.$$ 
By~\cref{pr:ROTprelims} there is a constant $K>0$ such that $\|(f_n^{\omega}\oplus g_n^{\omega})\|_{0,1}\leq K$ for all $n\geq1$ and $\omega\in\boldsymbol{\Omega}$. Recalling the notation $\xi_{n}(x,y):=f_n(x)+ g_n(y)-c(x,y)$, it follows that $\|\psi'(\xi_{n}^\omega)\|_{0,1}\leq K'$ for a constant~$K'$. We conclude that for all $(x_{0},y_{0})\in\Omega\times\Omega'$ and all $\omega\in\boldsymbol{\Omega}_1$,
\begin{align}\label{eq:rightHandSideZero}
  (1,1)- \Gamma(f_{n}^\omega,  g_{n}^\omega)(x_0,y_0) 
  & = \big[\Gamma_{n}^{\omega}(f_{n}^\omega,  g_{n}^\omega)-\Gamma(f_{n}^\omega,  g_{n}^\omega)\big](x_0,y_0)  \nonumber\\
  &= \left( \begin{array}{c}
   \int \psi'(\xi_{n}^\omega(x_0,y)) d(Q^\omega_n-Q)(y) \\
        \int \psi'(\xi_{n}^\omega(x,y_0)) d(P^\omega_n-P)(x)
   \end{array}\right) \to 0.
 \end{align}
Fix $\omega\in\boldsymbol{\Omega}_1$. After passing to a subsequence, $(f_{n}^\omega,  g_{n}^\omega)$ converges in $\Banach$ to a limit $(f_{0}^\omega,  g_{0}^\omega)$. We have $\Gamma(f_{n}^\omega,  g_{n}^\omega)\to \Gamma(f_{0}^\omega,  g_{0}^\omega)$ by the continuity of $\Gamma$, and thus~\eqref{eq:rightHandSideZero} shows that $\Gamma(f_{0}^\omega,  g_{0}^\omega)=(1,1)$. The uniqueness in \cref{th:uniqueness} now implies that $(f_{0}^\omega,  g_{0}^\omega)=(f_{*},g_{*})$ as elements of~$\Banach$. More precisely, this argument shows that any convergent subsequence converges to $(f_{*},g_{*})$, and hence that the whole sequence $(f_{n}^\omega,  g_{n}^\omega)$ converges to $(f_{*},g_{*})$ in $\Banach$. As $\mathbb{P}(\boldsymbol{\Omega}_1)=1$, this amounts to the claimed a.s.\ convergence.
\end{proof}%
\subsection{Key Decomposition}\label{se:key.decomposition}

To facilitate the reading we will assume from now on that $(f_n,g_n)$ are appropriately shifted so that
$$ \|(f_n-f_*,g_n-g_*)\|_{\oplus}=\|f_n-f_*\|_{\infty}+\|g_n-g_*\|_{\infty}.  $$
We also fix a H\"{o}lder exponent $\beta\in (0,1)$. 
As \( \int \psi''(\xi_*(\cdot, y)) dQ(y) \) is bounded away from zero and continuous (\cref{pr:ROTprelims}), the linear operator 
\[
\mathbb{F}: \mathcal{C}(\Omega) \times \mathcal{C}(\Omega') \ni (f,g) \mapsto \left( \begin{array}{c} 
\frac{f}{\int \psi''(\xi_*(\cdot,y)) dQ(y)} \\
\frac{g}{\int \psi''(\xi_*(x,\cdot)) dP(x)} 
\end{array} \right) \in \mathcal{C}(\Omega) \times \mathcal{C}(\Omega')
\]
is bounded and bijective. We define the Banach space \( \mathcal{B}_{\mathbb{F},\beta}\) of all functions %
\( (f,g)\in \mathcal{C}(\Omega) \times \mathcal{C}(\Omega') \) with finite norm  
\[
\| (f, g) \|_{\mathbb{F},\beta} :=  \| \mathbb{F}^{-1}(f, g) \|_{0, \beta}.
\]
This norm is stronger than the uniform norm. Indeed, a crucial fact for our subsequent arguments is that the unit ball of \( \mathcal{B}_{\mathbb{F},\beta} \) is compactly embedded in \( \mathcal{C}(\Omega) \times \mathcal{C}(\Omega') \) (by the same reasoning as in \cite[Lemma~6.33]{GilbargTrudinger.83}). 

It follows from \cref{th:CLT.Holder} of \cref{se:HolderCLT} that the central limit theorem
\[
\sqrt{n} \left(\Gamma(f_*, g_*) - \Gamma_n(f_*, g_*) \right)
=\sqrt{n}\left( \begin{array}{c}
         \int \psi' (\xi_*(\cdot,y)) d(Q-Q_n)(y) \\
         \int \psi'(\xi_*(x,\cdot))  d(P-P_n)(x)
   \end{array}\right)
\xrightarrow{w} \left( \begin{array}{c}
{\mathbf{G}_Q} \\
{\mathbf{G}_P}
\end{array} \right)
\]
holds in \( \mathcal{C}^{0, \beta}(\Omega) \times \mathcal{C}^{0, \beta}(\Omega') \), where $\mathbf{G}_Q, \mathbf{G}_P$ are Gaussian processes as detailed in \cref{th:CLT.pot}. As a consequence, the isometric image \( \tilde\Gamma_n = \mathbb{F}(\Gamma_n) \) satisfies the following central limit theorem in the space~\( \mathcal{B}_{\mathbb{F},\beta} \).

\begin{lemma}\label{lemma:CLTInHolder}
The following weak limit holds in $\mathcal{B}_{\mathbb{F},\beta}$,
$$ \sqrt{n}\,[\tilde\Gamma-\tilde\Gamma_n](f_*,g_*):= \sqrt{n} \left(\tilde\Gamma(f_*,g_*) -\tilde\Gamma_n(f_*,g_*)\right) \xrightarrow{w}\left( \begin{array}{c}
           \frac{\mathbf{G}_Q } {\int \psi''(\xi_*( \cdot ,y)) dQ(y)}   \\
            \frac{\mathbf{G}_P}{\int \psi''(\xi_*(x, \cdot ))  dP(x)}  
        \end{array}\right).$$
In particular, the sequence $\sqrt{n}\,[\tilde\Gamma-\tilde\Gamma_n](f_*,g_*)$ is tight in $\mathcal{B}_{\mathbb{F},\beta}$.
\end{lemma}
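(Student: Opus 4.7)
The plan is to reduce the claim, via the isometric change of coordinates $\mathbb{F}$, to the abstract central limit theorem in Hölder spaces stated in \cref{th:CLT.Holder}.

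By linearity of $\mathbb{F}$ we have $\sqrt{n}\,[\tilde\Gamma - \tilde\Gamma_n](f_*, g_*) = \mathbb{F}\bigl(\sqrt{n}\,[\Gamma - \Gamma_n](f_*, g_*)\bigr)$. By the very definition of $\|\cdot\|_{\mathbb{F},\beta}$, the operator $\mathbb{F}$ is an isometry from $(\mathcal{C}^{0,\beta}(\Omega)\times\mathcal{C}^{0,\beta}(\Omega'), \|\cdot\|_{0,\beta})$ into $(\mathcal{B}_{\mathbb{F},\beta}, \|\cdot\|_{\mathbb{F},\beta})$ (note that for any $(f,g)\in \mathcal{C}^{0,\beta}$, $\mathbb{F}^{-1}\mathbb{F}(f,g)=(f,g)$ is Hölder, so $\mathbb{F}(f,g)\in \mathcal{B}_{\mathbb{F},\beta}$ with norm $\|(f,g)\|_{0,\beta}$). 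Hence by the continuous mapping theorem, it suffices to establish
$$ \sqrt{n}\,[\Gamma - \Gamma_n](f_*, g_*) \xrightarrow{w} (\mathbf{G}_Q, \mathbf{G}_P) $$
in $\mathcal{C}^{0,\beta}(\Omega)\times \mathcal{C}^{0,\beta}(\Omega')$.

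Next, because the samples $\{X_i\}$ and $\{Y_j\}$ are independent, the two coordinates form independent centered empirical processes and can be treated separately. Writing $h_y(x) := \psi'(\xi_*(x,y))$, the first coordinate equals
$$-\frac{1}{\sqrt{n}}\sum_{i=1}^n \bigl(h_{Y_i} - \mathbb{E}[h_{Y_1}]\bigr) \in \mathcal{C}(\Omega).$$
By \cref{as:cost} and \cref{as:divergence} together with the Remark after \cref{pr:ROTprelims}, the functions $c$, $\psi'$, $f_*$ and $g_*$ are all $\mathcal{C}^1$, so $\xi_*$ and $\psi'\circ\xi_*$ are $\mathcal{C}^1$ on the compact set $\Omega\times\Omega'$. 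In particular $\sup_{y\in\Omega'}\|h_y\|_{0,1}<\infty$, and a fortiori $\sup_{y\in\Omega'}\|h_y\|_{0,\beta}<\infty$.

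This deterministic uniform bound is exactly what the hypotheses of \cref{th:CLT.Holder} require, so the theorem yields weak convergence of the first coordinate in $\mathcal{C}^{0,\beta}(\Omega)$ to a centered Gaussian element, whose covariance at $(x,x')$ is the ordinary i.i.d.\ covariance of $h_{Y_1}(x)$ and $h_{Y_1}(x')$ and thus agrees with that of $\mathbf{G}_Q$ from \cref{th:CLT.pot}. The analogous argument applies to the second coordinate with limit $\mathbf{G}_P$, and the joint convergence follows from the independence of the two samples. Applying $\mathbb{F}$ and invoking the reduction above completes the proof of weak convergence; tightness in $\mathcal{B}_{\mathbb{F},\beta}$ then follows from Prokhorov's theorem. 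The one step where I expect the most work is checking that the precise hypotheses of \cref{th:CLT.Holder} reduce to this uniform $\mathcal{C}^{0,1}$-boundedness of the summands; given that the abstract theorem in the appendix is designed precisely for this reduction, I expect the verification to be essentially routine.
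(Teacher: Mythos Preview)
Your proposal is correct and follows essentially the same approach as the paper: reduce via the isometry $\mathbb{F}$ to a CLT for $\sqrt{n}[\Gamma-\Gamma_n](f_*,g_*)$ in $\mathcal{C}^{0,\beta}(\Omega)\times\mathcal{C}^{0,\beta}(\Omega')$, then apply \cref{th:CLT.Holder} componentwise using that $\psi'\circ\xi_*$ is Lipschitz on the compact product domain. The paper's own argument is the terse paragraph immediately preceding the lemma, and your write-up simply fills in the routine verification of the hypotheses (which, as you suspected, reduces to the deterministic bound $\sup_{y}\|h_y\|_{0,1}<\infty$).
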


The goal of this subsection, and indeed the key technical result, is the following decomposition of $(f_n - f_*,g_n - g_*)$ into three terms.

\begin{proposition}\label{pr:three.terms}
    There exist random functions   $ U_n, V_n, W_n\in \mathcal{C}(\Omega)$ and $ U_n', V_n', W_n'\in \mathcal{C}(\Omega')$, and compact sets  $\mathcal{K}\subset\mathcal{C}(\Omega)$ and $\mathcal{K}'\subset\mathcal{C}(\Omega')$, such that
    \begin{enumerate}
    \item $(U_n, U_n')$ takes values in $\mathcal{K} \times \mathcal{K}'$ for all $n$,
    \item    $ \|(V_n,V_n')\|_\infty\xrightarrow{\text{a.s.}} 0$,
    \item $\mathbb{L}(W_n,W_n')$ is tight in $\mathcal{B}_{\mathbb{F},\beta}$,    
    \item for all $n$,  
    \begin{equation}
    \label{eq:three.terms}
    \left(\begin{array}{c}
      f_n - f_*     \\
      g_n - g_*  
    \end{array} \right) 
    = \| f_n - f_*, g_n - g_*\|_\oplus 
    \left(\begin{array}{c}
       U_n + V_n     \\
       U_n' + V_n'  
    \end{array} \right) 
    + n^{-\frac{1}{2}}
    \left(\begin{array}{c}
       W_n     \\
       W_n'  
    \end{array} \right).
\end{equation}      
\end{enumerate}  
\end{proposition}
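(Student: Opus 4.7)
The plan is to expand $\tilde\Gamma$ at $(f_*, g_*)$ via \cref{Lemma:Frechet}, use the first-order conditions $\tilde\Gamma(f_*, g_*) = \mathbb{F}(1,1) = \tilde\Gamma_n(f_n, g_n)$ to rearrange the expansion, invert $\mathbb{L}$ thanks to \cref{pr:Invertible}, and analyze each resulting term. Writing $\delta_n := \|(f_n - f_*, g_n - g_*)\|_\oplus$ and $\Delta_n := [\tilde\Gamma - \tilde\Gamma_n](f_*, g_*) - [\tilde\Gamma - \tilde\Gamma_n](f_n, g_n)$, the Fréchet expansion combined with the first-order conditions and the quotient map into $\Banach$ yields
\[
(f_n - f_*, g_n - g_*) = \mathbb{L}^{-1}\left[[\tilde\Gamma - \tilde\Gamma_n](f_*, g_*)\right]_\oplus - \mathbb{L}^{-1}[\Delta_n]_\oplus - \mathbb{L}^{-1}[R_n]_\oplus,
\]
where $R_n$ is the Fréchet remainder and satisfies $\|R_n\|_\infty = o(\delta_n)$.

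Setting $(W_n, W_n') := \sqrt{n}\,\mathbb{L}^{-1}\left[[\tilde\Gamma - \tilde\Gamma_n](f_*, g_*)\right]_\oplus$, \cref{lemma:CLTInHolder} together with the continuity of $\mathbb{L}^{-1}$ gives that $\mathbb{L}(W_n, W_n') = \sqrt{n}\,[[\tilde\Gamma - \tilde\Gamma_n](f_*, g_*)]_\oplus$ is tight in $\mathcal{B}_{\mathbb{F},\beta}$, which handles the $n^{-1/2}(W_n, W_n')$ term in \eqref{eq:three.terms}. For the third summand, $\|R_n\|_\infty = o(\delta_n)$ and continuity of $\mathbb{L}^{-1}$ give $\|\mathbb{L}^{-1}[R_n]_\oplus\|_\oplus = o(\delta_n)$, which I absorb into $(V_n, V_n')$. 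The technical work lies entirely in showing that the middle summand $\mathbb{L}^{-1}[\Delta_n]_\oplus$ admits a decomposition as $\delta_n$ times a compact-valued sequence plus $\delta_n$ times a sequence vanishing a.s.\ in sup norm.

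For that, the fundamental theorem of calculus provides $\psi'(\xi_*) - \psi'(\xi_n) = \Psi_n \cdot [(f_* - f_n) \oplus (g_* - g_n)]$ with $\Psi_n := \int_0^1 \psi''(\xi_n + \lambda(\xi_* - \xi_n))\,d\lambda$. Substituting into the definition of $\Delta_n$ and integrating against $d(Q - Q_n)(y)$ gives
\[
D(x)\,\Delta_n^{(1)}(x) = (f_* - f_n)(x)\,E_n(x) + F_n(x),
\]
with $D(x) := \int \psi''(\xi_*(x,y))\,dQ(y)$, $E_n(x) := \int \Psi_n(x,y)\,d(Q - Q_n)(y)$ and $F_n(x) := \int \Psi_n(x,y)(g_* - g_n)(y)\,d(Q - Q_n)(y)$. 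Under \cref{as:cost}, \cref{pr:ROTprelims}(vi) gives uniform Lipschitz bounds on $f_n, f_*, g_n, g_*$, and composing with the uniformly continuous function $\psi''$ on the bounded range of $\xi_n + \lambda(\xi_* - \xi_n)$ makes $\{\Psi_n(x,\cdot) : n, x\}$ uniformly equicontinuous, hence relatively compact in $\mathcal{C}(\Omega')$. The universal Glivenko--Cantelli theorem then yields $\sup_{x,n}|E_n(x)| \to 0$ a.s., so $(f_* - f_n)\,E_n = \delta_n\,\tilde V_n$ with $\|\tilde V_n\|_\infty \to 0$ a.s.

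The crux is to show that $F_n/\delta_n$ is relatively compact in $\mathcal{C}(\Omega)$. Sup-norm control is immediate from $\|F_n\|_\infty \leq 2\|\Psi_n\|_\infty\|g_* - g_n\|_\infty \leq C\delta_n$. For the modulus of continuity in $x$,
\[
|F_n(x) - F_n(x')| \leq 2\,\|g_* - g_n\|_\infty\,\sup_y|\Psi_n(x,y) - \Psi_n(x',y)| \leq C\,\delta_n\,\omega(|x - x'|)
\]
holds with $\omega$ a modulus of continuity for $\Psi_n(\cdot, y)$ uniform in $(n, y)$, which again follows from \cref{as:cost} and the Lipschitz regularity of the potentials. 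Arzel\`a--Ascoli then places $\{F_n/\delta_n\}$ in a compact subset of $\mathcal{C}(\Omega)$. Treating $\Delta_n^{(2)}$ by symmetry and applying the bounded operator $\mathbb{L}^{-1}$, which preserves both compactness and a.s.\ convergence to zero, produces the decomposition \eqref{eq:three.terms}. The main obstacle is exactly this last compactness step: the naive normalization $(g_* - g_n)/\delta_n$ may have a H\"older norm that blows up in $n$, so compactness of $F_n/\delta_n$ must be recovered through smoothness of $\Psi_n$ in the $x$-variable, inherited from $c \in \mathcal{C}^1$ and the Lipschitz regularity of the potentials, rather than from the $y$-variable where the small difference itself lives.
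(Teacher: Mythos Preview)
Your proposal is correct and follows essentially the same route as the paper: Fr\'echet expansion of $\tilde\Gamma$ at $(f_*,g_*)$, use of the first-order conditions and inversion of $\mathbb{L}$, definition of $(W_n,W_n')$ via the H\"older CLT (\cref{lemma:CLTInHolder}), absorption of the Fr\'echet remainder into the vanishing part, and then the fundamental-theorem-of-calculus decomposition of $\Delta_n$ into a Glivenko--Cantelli piece and a compact piece via Arzel\`a--Ascoli applied to the equicontinuity of $\Psi_n$ in~$x$. The paper packages the last two pieces into separate lemmas (\cref{lemma:develomentIncompact}, \cref{lemma:compactclass}, \cref{lemma:compactclass2}) and normalizes by $\|f_n-f_*\|_\infty$ and $\|g_n-g_*\|_\infty$ separately rather than by $\delta_n$, but the argument is the same.
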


Next, we collect the higher-level steps of the proof, while outsourcing the more technical parts to the subsequent lemmas.

\begin{proof}[Proof of \cref{pr:three.terms}]
By \cref{Lemma:Frechet} there are random functions \((V_{n,\mathbb{L}}, V_{n,\mathbb{L}}') \in \mathcal{C}(\Omega) \times \mathcal{C}(\Omega')\) with \(\|(V_{n,\mathbb{L}},V_{n,\mathbb{L}}')\|_\infty \xrightarrow{\text{a.s.}} 0\) such that
\[
\tilde\Gamma(f_n, g_n) - \tilde\Gamma(f_*, g_*) = \mathbb{L} (f_n - f_*, g_n - g_*) - \|(f_n - f_*, g_n - g_*)\|_{\oplus} (V_{n,\mathbb{L}},V_{n,\mathbb{L}}').
\]
Using the fact that \(\Gamma(f_*, g_*) = (1,1) = \Gamma_n(f_n, g_n)\) and setting 
$$\tilde\Gamma_n : = \left( \begin{array}{c}\frac{\Gamma_n^{(1)}}{\int \psi''(\xi_*(\cdot,y)) dQ(y)}
 \\  \frac{\Gamma^{(2)}_n}{\int \psi''(\xi_*(x,\cdot))dP(x)}\end{array}\right),$$
this yields
    \begin{equation}
        \label{eq:development1}
        \tilde\Gamma(f_n,g_n) -\tilde\Gamma_n(f_n,g_n)= \mathbb{L} (f_n-f_*,g_n-g_*) - \|(f_n-f_*,g_n-g_*)\|_{\oplus} (V_{n,\mathbb{L}},V_{n,\mathbb{L}}').
    \end{equation}
Recall from \cref{pr:Invertible} that \( \mathbb{L} \) is a bounded bijection. Moreover, $\|\tilde\Gamma(f_* - f_n, g_* - g_n)\|_{\Banach}\to0$ a.s.\ by \cref{lemma.A.s} and the continuity of $\tilde\Gamma$. Setting also 
\begin{equation}\label{eq:Delta.n}
\Delta_n := [\tilde\Gamma - \tilde\Gamma_n](f_*, g_*) - [\tilde\Gamma  - \tilde\Gamma_n](f_n, g_n),
\end{equation}
we deduce
\begin{multline*}
    (f_n - f_*, g_n - g_*)\\
    =\mathbb{L}^{-1}([\tilde\Gamma - \tilde\Gamma_n] (f_*, g_*))-\mathbb{L}^{-1}(\Delta_n)+ \|(f_n - f_*, g_n - g_*)\|_{\oplus} \mathbb{L}^{-1}(V_{n,\mathbb{L}}, V'_{n,\mathbb{L}}).
\end{multline*}
Set $(W_n,W_n')=n^{\frac{1}{2}}\mathbb{L}^{-1}([\tilde\Gamma - \tilde\Gamma_n] (f_*, g_*))$. Then $\mathbb{L}(W_n,W_n')$ is tight by \cref{lemma:CLTInHolder}  and $\mathbb{L}^{-1}([\tilde\Gamma - \tilde\Gamma_n] (f_*, g_*))=n^{-\frac{1}{2}}(W_n,W_n')$.
Therefore,
\begin{align*}
    (f_n - f_*, g_n - g_*) 
    = n^{-\frac{1}{2}}(W_n,W_n') -\mathbb{L}^{-1}(\Delta_n) +\|(f_n - f_*, g_n - g_*)\|_{\oplus} \mathbb{L}^{-1}(V_{n,\mathbb{L}}, V'_{n,\mathbb{L}}).
\end{align*}
\Cref{lemma:develomentIncompact} below derives a decomposition for the remaining term $\Delta_n$ which completes the proof of \cref{pr:three.terms} after recalling that $\mathbb{L}^{-1}$ is continuous. Specifically, the function $(V_n,V_n')$ in the assertion of \cref{pr:three.terms} arises from combining the function $\mathbb{L}^{-1}(V_{n,\mathbb{L}}, V'_{n,\mathbb{L}})$ above with the function $\mathbb{L}^{-1}(\tilde V_n, \tilde V'_n)$ from \cref{lemma:develomentIncompact}.
\end{proof}

The following lemma encapsulates the main step of the preceding proof of \cref{pr:three.terms} and uses the notation $\Delta_n$ from~\eqref{eq:Delta.n}. %
 
\begin{lemma}\label{lemma:develomentIncompact}
There exist compact sets  $\tilde{\mathcal{K}}\subset\mathcal{C}(\Omega)$ and $\tilde{\mathcal{K}}'\subset\mathcal{C}(\Omega')$, and random functions   $ \tilde U_n, \tilde V_n\in \mathcal{C}(\Omega)$ and $ \tilde U_n',  \tilde V_n'\in \mathcal{C}(\Omega')$, such that
\begin{enumerate}
    \item $(\tilde U_n, \tilde U_n')$ takes values in $\tilde{\mathcal{K}} \times \tilde{\mathcal{K}}'$ for all $n$,
    \item    $ \|\tilde V_n,\tilde V_n'\|_\infty\xrightarrow{\text{a.s.}} 0$,
    \item for all $n$,  
    $$ \Delta_n= \left(\begin{array}{c}
      \|f_n-f_*\|_\infty \tilde V_n     \\
       \|g_n-g_*\|_\infty \tilde V_n'  
    \end{array} \right)+ \left(\begin{array}{c}
      \|g_n-g_*\|_\infty \tilde U_n     \\
       \|f_n-f_*\|_\infty \tilde U_n'  
    \end{array} \right).
$$
    \end{enumerate}
    
\end{lemma}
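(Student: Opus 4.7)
The plan is to linearize the integrand of $\Delta_n$ via the fundamental theorem of calculus and then separate two distinct contributions, one whose uniform norm vanishes almost surely, and another confined to a deterministic compact subset of $\mathcal{C}(\Omega)$. Introducing
\[
h_n(x,y) := \int_0^1 \psi''\bigl(\lambda \xi_*(x,y) + (1-\lambda)\xi_n(x,y)\bigr)\, d\lambda,
\]
one has $\psi'(\xi_*(x,y)) - \psi'(\xi_n(x,y)) = \bigl[(f_*-f_n)(x) + (g_*-g_n)(y)\bigr]\, h_n(x,y)$. Writing $D_Q(x) := \int \psi''(\xi_*(x,y))\, dQ(y)$, which is continuous and bounded away from $0$ by \cref{pr:ROTprelims}, the first component of $\Delta_n$ then splits as
\[
\Delta_n^{(1)}(x) = \frac{(f_*-f_n)(x)}{D_Q(x)}\int h_n(x,y)\, d(Q-Q_n)(y) + \frac{1}{D_Q(x)}\int (g_*-g_n)(y)\,h_n(x,y)\, d(Q-Q_n)(y),
\]
and analogously for $\Delta_n^{(2)}$ with the roles of $P,Q$ and $f,g$ swapped.

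The first summand will be identified with $\|f_*-f_n\|_\infty \tilde V_n(x)$, where $\tilde V_n := \frac{f_*-f_n}{\|f_*-f_n\|_\infty}\cdot\frac{I_n}{D_Q}$ (set to zero when $\|f_*-f_n\|_\infty = 0$) and $I_n(x) := \int h_n(x,y)\, d(Q-Q_n)(y)$. Since the prefactor $(f_*-f_n)/(\|f_*-f_n\|_\infty D_Q)$ is uniformly bounded, it suffices to prove $\|I_n\|_\infty \xrightarrow{\mathrm{a.s.}} 0$. The key observation is that, by \cref{pr:ROTprelims}(vi), $\xi_*$ and all $\xi_n$ are uniformly bounded and share a common modulus of continuity inherited from $c$; since $\psi''$ is uniformly continuous on the corresponding compact interval, the family $\{h_n(x,\cdot): x\in\Omega,\ n\in\mathbb{N},\ \omega\in\boldsymbol{\Omega}\}$ is uniformly bounded and equicontinuous on $\Omega'$, hence contained by Arzelà–Ascoli in a deterministic compact subset $\mathcal{F}\subset\mathcal{C}(\Omega')$. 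Any compact subset of $\mathcal{C}(\Omega')$ is universally Glivenko–Cantelli, yielding $\sup_{h\in\mathcal{F}}\bigl|\int h\, d(Q_n-Q)\bigr|\to 0$ a.s., and therefore $\|I_n\|_\infty \to 0$ a.s.

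For the second summand I set
\[
\tilde U_n(x) := \frac{1}{\|g_*-g_n\|_\infty D_Q(x)}\int (g_*-g_n)(y)\, h_n(x,y)\, d(Q-Q_n)(y)
\]
(zero if $\|g_*-g_n\|_\infty = 0$), so that the summand equals $\|g_*-g_n\|_\infty \tilde U_n(x)$. Since $(g_*-g_n)/\|g_*-g_n\|_\infty$ is bounded by $1$, $h_n$ is uniformly bounded, and $|Q-Q_n|$ has total mass at most $2$, $\tilde U_n$ is uniformly bounded in $n$ and $\omega$. For equicontinuity in $x$, the same regularity considerations furnish a modulus of continuity for $h_n$ in $x$ that is uniform in $y,n,\omega$; combined with the continuity of $1/D_Q$ on the compact set $\Omega$, this yields a uniform modulus of continuity of $\tilde U_n$. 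Arzelà–Ascoli then places the $\tilde U_n$ in a fixed compact $\tilde{\mathcal{K}}\subset\mathcal{C}(\Omega)$. The symmetric construction applied to $\Delta_n^{(2)}$ produces $\tilde V_n'$ and $\tilde U_n'$ with the required properties.

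The main obstacle is the uniform-in-$\omega$ control of the family $\{h_n\}$ that underlies both statements: this uniform control is what permits the Glivenko–Cantelli step for $\tilde V_n$ and the Arzelà–Ascoli compactness for $\tilde U_n$. The crucial enabling input is \cref{pr:ROTprelims}(vi), which provides a deterministic modulus of continuity for the empirical potentials $(f_n,g_n)$ independent of $n$ and the realization of the sample; without this a priori regularity the key families would only be random equicontinuous, and one could not reduce to a single compact class.
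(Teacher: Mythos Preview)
Your proposal is correct and follows essentially the same route as the paper's proof: the same fundamental-theorem-of-calculus linearization producing the kernel $h_n$ (the paper's $\psi_n$), the same split into the $(f_*-f_n)$ and $(g_*-g_n)$ contributions, and the same two mechanisms---a compact Glivenko--Cantelli class for $\tilde V_n$ and an Arzel\`a--Ascoli argument for $\tilde U_n$---both driven by the uniform modulus of continuity from \cref{pr:ROTprelims}(vi). Your formulation is in fact slightly more precise than the paper's, since you explicitly include the normalization $(f_*-f_n)/\|f_*-f_n\|_\infty$ in $\tilde V_n$, which is needed to match the statement verbatim.
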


\begin{proof}%
    We prove the claims concerning the functions on~$\Omega$; the functions on~$\Omega'$ are obtained analogously.  
We start by recalling that 
$$  \Gamma^{(1)}(f_n,g_n) -\Gamma_n^{(1)}(f_n,g_n)=\int \psi'(\xi_n(\cdot, y)) d(Q-Q_n)(y)$$
and 
$$  \Gamma^{(1)}(f_*,g_*) -\Gamma_n^{(1)}(f_*,g_*)=\int \psi'(\xi_*(\cdot, y)) d(Q-Q_n)(y).$$
Set 
$$ \Delta_n^{(1)} := \Gamma^{(1)}(f_*, g_*) - \Gamma^{(1)}_n(f_*, g_*) - \big(\Gamma^{(1)}(f_n, g_n) - \Gamma^{(1)}_n(f_n, g_n)\big). $$
Note that writing $\Delta_n^{(1)}$ is a slight abuse of notation: defining analogously $\Delta_n^{(2)}$, we have
\begin{equation}\label{eq:DeltaSplit}
  \Delta_n = \left(\frac{\Delta_n^{(1)}}{\int \psi''(\xi_*(\cdot, y)) \, dQ(y)},\frac{\Delta_n^{(2)}}{\int \psi''(\xi_*(x,\cdot)) \, dP(x)} \right).
\end{equation}
The fundamental theorem of calculus implies that 
\begin{align*}
   \Delta_n^{(1)}  &=\int \{\psi'(\xi_*(\cdot, y))-\psi'(\xi_n(\cdot, y))\} d(Q-Q_n)(y)\\
    &= \int \int_0^1 \psi'' \big( (1-\lambda)\xi_n(\cdot, y) +\lambda \xi_*(\cdot, y)\big)( \xi_*(\cdot, y) - \xi_n(\cdot, y)) d\lambda d(Q-Q_n)(y).
\end{align*}
We set $\psi_n(\cdot,y) := \int_0^1 \psi''\big( (1-\lambda)\xi_n(\cdot, y) +\lambda \xi_*(\cdot, y)\big)d\lambda $ and use the definitions of $\xi_*,\xi_n$ to deduce
\begin{align}
   \Delta_n^{(1)}&=\int \psi_n(\cdot,y)(  \xi_*(\cdot, y) - \xi_n(\cdot, y))  d(Q-Q_n)(y) \nonumber\\
    &=  (f_*-f_n)\int \psi_n(\cdot,y) d(Q-Q_n)(y)
+ \int \psi_n(\cdot,y)( g_*(y)-g_n(y))  d(Q-Q_n)(y). \label{DifferenceGammas}
\end{align}
Considering the first term in~\eqref{DifferenceGammas} and adding back the denominator from~\eqref{eq:DeltaSplit}, set
$$
  \tilde V_n := \frac{1}{\int \psi''(\xi_*(\cdot, y)) \, dQ(y)}\int \psi_n(\cdot,y) d(Q-Q_n)(y).
$$
Then $\tilde V_n$ is a random variable with values in $\mathcal{C}(\Omega)$. As $\xi_n,\xi_*$ have uniformly bounded Lipschitz norms by \cref{pr:ROTprelims}, it follows from \cref{lemma:compactclass} below that $ \|\tilde V_n\|_\infty\to0$ a.s. Regarding the second term in~\eqref{DifferenceGammas}, set (using the convention $0/0:=0$ if necessary)
$$
  \tilde U_n := \frac{1}{\int \psi''(\xi_*(x,\cdot)) \, dP(x)}  \int \psi_n(\cdot,y)\frac{g_*(y)-g_n(y)}{\|g_n-g_*\|_\infty}  d(Q-Q_n)(y).
$$
Then \cref{lemma:compactclass2} below, applied with $d\mu(y):=\frac{g_*(y)-g_n(y)}{\|g_n-g_*\|_\infty}  d(Q-Q_n)(y)$, shows that $\tilde U_n$ is a random variable with values in a compact set $\tilde{\mathcal{K}}\subset\mathcal{C}(\Omega)$. %
\end{proof}

The next two technical lemmas were used in the preceding proof of \cref{lemma:develomentIncompact}.

\begin{lemma}\label{lemma:compactclass}
    For any $C\in\mathbb{\R}$, the class of functions
$$ \mathcal{F}:= \left\{\Omega' \ni y\mapsto \int_{0}^1\psi''(  h(x,y, \lambda) ) d\lambda: x\in \Omega,\  \|h\|_{0,1}\leq C\right\} $$
is relatively compact in $\mathcal{C}(\Omega)$. As a consequence, 
$$  \sup_{f\in \mathcal{F}} \left|\int f d(Q_n-Q)\right| \xrightarrow{\text{a.s.}} 0.$$
\end{lemma}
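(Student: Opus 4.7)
The plan is to prove $\mathcal{F}$ is relatively compact in the continuous-functions ambient space via the Arzel\`a--Ascoli theorem, and then convert this into the stated uniform strong law of large numbers by a textbook finite $\varepsilon$-net argument combined with the classical SLLN.

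For the Arzel\`a--Ascoli conditions, I would verify uniform boundedness and uniform equicontinuity of $\mathcal{F}$. Uniform boundedness is immediate: the hypothesis $\|h\|_{0,1}\leq C$ gives $|h(x,y,\lambda)|\leq C$ pointwise, and continuity of $\psi''$ on $\mathbb{R}$ (guaranteed by \cref{as:divergence}) makes $M:=\max_{|t|\leq C}|\psi''(t)|$ finite, so every $f\in\mathcal{F}$ is bounded by $M$. For equicontinuity, let $\omega$ denote a modulus of continuity of $\psi''$ restricted to $[-C,C]$. For any admissible $h$ and any $y,y'$ in the domain,
\[
|h(x,y,\lambda)-h(x,y',\lambda)| \leq [h]_1\,\|y-y'\| \leq C\,\|y-y'\|,
\]
so composing with $\psi''$ and averaging in $\lambda$ produces the bound
\[
\left|\int_0^1\psi''(h(x,y,\lambda))\,d\lambda - \int_0^1\psi''(h(x,y',\lambda))\,d\lambda\right| \leq \omega(C\|y-y'\|),
\]
uniformly in $x\in\Omega$ and in the choice of $h$. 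Arzel\`a--Ascoli then delivers the relative compactness.

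For the asymptotic statement, I would fix $\varepsilon>0$, select a finite $\varepsilon$-net $f_1,\ldots,f_N\in\mathcal{F}$ for the uniform norm (possible by the compactness just established), and apply the triangle inequality: for $f\in\mathcal{F}$ with $\|f-f_i\|_\infty\leq\varepsilon$,
\[
\left|\int f\,d(Q_n-Q)\right| \leq 2\varepsilon + \max_{1\leq j\leq N}\left|\int f_j\,d(Q_n-Q)\right|.
\]
Taking the supremum over $f\in\mathcal{F}$ and invoking the SLLN on each of the finitely many bounded $f_j$ makes the right-hand side converge to $2\varepsilon$ almost surely; sending $\varepsilon$ to zero along a countable sequence then finishes the argument.

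No serious obstacle is expected: the crucial ingredient --- continuity (and thus uniform continuity on compacts) of $\psi''$ --- is already built into \cref{as:divergence}, and both Arzel\`a--Ascoli hypotheses follow transparently from the uniform control $\|h\|_{0,1}\leq C$. The remainder is the standard route from relative compactness of a function class to a uniform law of large numbers.
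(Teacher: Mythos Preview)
Your proposal is correct and follows essentially the same route as the paper: Arzel\`a--Ascoli via the modulus of continuity of $\psi''$ on $[-C,C]$, then the uniform law of large numbers from relative compactness. The only cosmetic difference is that the paper dispatches the second claim by citing a general Glivenko--Cantelli result for totally bounded classes, whereas you spell out the standard $\varepsilon$-net plus SLLN argument directly; your version is slightly more self-contained.
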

\begin{proof}
Let $\rho$ be a modulus of continuity of $\psi''$ on $[-C,C]$, i.e., 
$\psi''(t_1)-\psi''(t_2)\leq \rho(|t_1-t_2|)$ for all $t_1, t_2\in [-C,C]$, where $\rho$ is monotone increasing and $\rho(s)\to 0$ as $s\to 0$. 
    Then  the modulus of continuity of $\int_{0}^1\psi''(  h(x,\cdot, \lambda) ) d\lambda$ is bounded as follows, 
    \begin{align*}
    \Bigg\vert \int_{0}^1\psi''(  h(x,y_1, \lambda) )& d\lambda-\int_{0}^1\psi''(  h(x,y_2, \lambda) )d\lambda   \Bigg\vert \\
    &\leq \int_{0}^1 \vert \psi''(  h(x,y_1, \lambda) ) -\psi''(  h(x,y_2, \lambda) ) \vert d\lambda \\
    &\leq \int_{0}^1 \rho(|h(x,y_1, \lambda) -h(x,y_2, \lambda)| ) d\lambda\\
    & \leq \rho(C\|y_1-y_2\| ).
    \end{align*}
  In view of the Arzel\`{a}--Ascoli theorem, this implies that $ \mathcal{F}$ is relatively compact in $\mathcal{C}(\Omega)$. The second claim follows from this compactness; see \cite[Theorem~2.4.1]{vanderVaart.1996}.
\end{proof}

\begin{lemma}\label{lemma:compactclass2}
Set $\psi_n(\cdot,y) := \int_0^1 \psi''( (1-\lambda)\xi_n(\cdot, y) +\lambda \xi_*(\cdot, y))d\lambda $. The family of functions
  $$
    \int \psi_n(\cdot,y) d\mu(y), \quad n\geq1
  $$
  where $\mu\in \mathcal{C}(\Omega)'$ is any signed measure with $\|\mu\|_{{\rm TV}} \leq 2$, is relatively compact in $\mathcal{C}(\Omega')$.
\end{lemma}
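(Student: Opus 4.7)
The plan is to verify the hypotheses of the Arzel\`{a}--Ascoli theorem, namely that the family is uniformly bounded and uniformly equicontinuous in $x\in\Omega$, with bounds independent of $n$ and of the signed measure $\mu$ (subject to $\|\mu\|_{\rm TV}\leq 2$). Throughout, I will use that $c\in\mathcal{C}^1(\Omega\times\Omega')$ on a compact set, so $c$ has a common Lipschitz constant $L_c$ in each variable, and I will invoke the uniform bounds and moduli of continuity from \cref{pr:ROTprelims}(vi).

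First, I would establish uniform boundedness of the integrand. By \cref{pr:ROTprelims}(vi), the potentials $f_n, g_n, f_*, g_*$ and the cost $c$ are all uniformly bounded on $\Omega\times\Omega'$, with a bound independent of $n$. Hence the family of arguments $(1-\lambda)\xi_n(x,y) + \lambda\xi_*(x,y)$, for $\lambda\in[0,1]$, takes values in some fixed compact interval $I\subset\mathbb{R}$. Since $\psi\in\mathcal{C}^2$ by \cref{as:divergence}, $M:=\sup_I|\psi''|<\infty$, so $\|\psi_n\|_\infty\leq M$ uniformly in $n$, and thus $\left|\int\psi_n(x,y)\,d\mu(y)\right|\leq M\|\mu\|_{\rm TV}\leq 2M$ for every $x\in\Omega$.

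Next, I would establish equicontinuity in $x$. Since $c\in\mathcal{C}^1$ on a compact set, $c(\cdot,y)$ is $L_c$-Lipschitz uniformly in $y$. By \cref{pr:ROTprelims}(vi), $f_n$ and $f_*$ inherit the modulus of continuity of $c$, hence are also $L_c$-Lipschitz uniformly in $n$. Consequently, $x\mapsto (1-\lambda)\xi_n(x,y)+\lambda\xi_*(x,y)$ is $2L_c$-Lipschitz uniformly in $n$, $y$ and $\lambda\in[0,1]$. Letting $\omega$ be a modulus of uniform continuity for $\psi''$ on $I$, this yields
\[
|\psi_n(x_1,y)-\psi_n(x_2,y)|
\leq \int_0^1 \omega\bigl(2L_c\|x_1-x_2\|\bigr)\,d\lambda
=\omega\bigl(2L_c\|x_1-x_2\|\bigr),
\]
uniformly in $n$ and $y$. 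Integrating against $\mu$ gives
\[
\left|\int\psi_n(x_1,y)\,d\mu(y)-\int\psi_n(x_2,y)\,d\mu(y)\right|
\leq \|\mu\|_{\rm TV}\,\omega\bigl(2L_c\|x_1-x_2\|\bigr)
\leq 2\omega\bigl(2L_c\|x_1-x_2\|\bigr),
\]
which is a modulus of continuity independent of $n$ and $\mu$. The Arzel\`{a}--Ascoli theorem then yields the claimed relative compactness.

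The only non-routine point is the uniform Lipschitz bound for the potentials $f_n$ in step two, but this is handed to us by \cref{pr:ROTprelims}(vi), since the modulus of continuity provided there depends only on that of $c$, not on the marginals $(P_n,Q_n)$. With that in hand, the rest is a direct application of Arzel\`{a}--Ascoli.
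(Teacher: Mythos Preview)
Your proof is correct and follows essentially the same approach as the paper: both establish relative compactness via the Arzel\`{a}--Ascoli theorem by exhibiting a uniform bound and a common modulus of continuity for the family, using \cref{pr:ROTprelims}(vi) to control the potentials uniformly in~$n$. Your write-up simply spells out in detail what the paper compresses into one sentence (``these functions are uniformly bounded and admit a common modulus of continuity'').
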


\begin{proof}
    Similarly as in the proof of \cref{lemma:compactclass}, these functions are uniformly bounded and admit a common modulus of continuity.
\end{proof}

We conclude this subsection with a corollary of \cref{pr:three.terms} that will be used to prove the central limit theorem, \cref{th:CLT.pot}. 

\begin{corollary}\label{Coro:exchangeGamman}
 The sequence 
$\Delta_n := [\tilde\Gamma - \tilde\Gamma_n](f_*, g_*) - [\tilde\Gamma  - \tilde\Gamma_n](f_n, g_n)$
satisfies
  $$ \| \Delta_n\|_{\oplus}=o_{\mathbb{P}}\left(\|(f_*-f_n,g_*-g_n)\|_\oplus + n^{-\frac{1}{2}}\right).$$
\end{corollary}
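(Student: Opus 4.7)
The plan is to combine the explicit decomposition of $\Delta_n$ from \cref{lemma:develomentIncompact} with the three-term expansion of $(f_n-f_*, g_n-g_*)$ from \cref{pr:three.terms}. Writing the first component of $\Delta_n$ as $\|f_n-f_*\|_\infty \tilde V_n + \|g_n-g_*\|_\infty \tilde U_n$ (and symmetrically for the second), the contributions involving $\tilde V_n, \tilde V_n'$ are immediately $o_\mathbb{P}(\|(f_n-f_*, g_n-g_*)\|_\oplus)$, since $\|\tilde V_n\|_\infty, \|\tilde V_n'\|_\infty \to 0$ a.s.\ and $\|f_n-f_*\|_\infty, \|g_n-g_*\|_\infty \leq \|(f_n-f_*, g_n-g_*)\|_\oplus$. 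The issue is the $\tilde U_n, \tilde U_n'$ terms, which naively are only $O_\mathbb{P}(\|g_n-g_*\|_\infty)$ when one merely uses $\tilde U_n \in \tilde{\mathcal{K}}$ compact.

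To extract the extra decay, I would unwind the definition of $\tilde U_n$ from the proof of \cref{lemma:develomentIncompact}, namely
\begin{equation*}
\|g_n - g_*\|_\infty \tilde U_n(\cdot) = \frac{1}{\int \psi''(\xi_*(\cdot,y))\, dQ(y)} \int \psi_n(\cdot, y)\,(g_*(y)-g_n(y))\, d(Q-Q_n)(y),
\end{equation*}
and substitute the identity $g_* - g_n = -\|(f_n-f_*, g_n-g_*)\|_\oplus\,(U_n'+V_n') - n^{-1/2} W_n'$ provided by \cref{pr:three.terms}. This splits $\|g_n-g_*\|_\infty \tilde U_n$ as $-\|(f_n-f_*, g_n-g_*)\|_\oplus A_n - n^{-1/2} B_n$, where $A_n$ and $B_n$ are the integrals against $d(Q-Q_n)$ of $\psi_n(\cdot,y)\,(U_n'(y)+V_n'(y))/D(\cdot)$ and $\psi_n(\cdot,y)\,W_n'(y)/D(\cdot)$, respectively, with $D(x) := \int \psi''(\xi_*(x,y))\, dQ(y)$ uniformly bounded above and away from zero by \cref{pr:ROTprelims}.

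For $A_n$, the $V_n'$ piece is controlled by $2\|\psi_n\|_\infty \|V_n'\|_\infty/\inf D \to 0$ a.s., while for the $U_n'$ piece I would note that $\{\psi_n(x,\cdot)\,u(\cdot):n\in\mathbb{N},\, x\in\Omega,\, u\in \mathcal{K}'\}$ is a deterministic relatively compact family in $\mathcal{C}(\Omega')$, since $\{\psi_n(x,\cdot)\}$ lies in the compact class of \cref{lemma:compactclass} (using the uniform Lipschitz bounds on $\xi_n, \xi_*$ from \cref{pr:ROTprelims}) and $\mathcal{K}'$ is compact; Glivenko-Cantelli for relatively compact classes (as cited at the end of \cref{lemma:compactclass}) then yields $\|A_n\|_\infty \xrightarrow{\text{a.s.}} 0$. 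For $B_n$ the obstacle is that $W_n'$ is unbounded, and this is where the Hölder-type upgrade matters: \cref{pr:three.terms} guarantees tightness of $\mathbb{L}(W_n, W_n')$ in $\mathcal{B}_{\mathbb{F},\beta}$, so the compact embedding of the unit ball of $\mathcal{B}_{\mathbb{F},\beta}$ into $\Banach$ combined with continuity of $\mathbb{L}^{-1}$ on $\Banach$ yields tightness of $W_n'$ in $\mathcal{C}(\Omega')$. For each $\epsilon>0$ one selects a compact $K'_\epsilon \subset \mathcal{C}(\Omega')$ with $\mathbb{P}(W_n' \in K'_\epsilon) \ge 1-\epsilon$; on this event the family $\{\psi_n(x,\cdot)\,w: n\in\mathbb{N}, x\in\Omega, w\in K'_\epsilon\}$ is a deterministic relatively compact class in $\mathcal{C}(\Omega')$, and Glivenko-Cantelli gives $\|B_n\|_\infty \mathbf{1}_{\{W_n'\in K'_\epsilon\}} \xrightarrow{\text{a.s.}} 0$, whence letting $\epsilon \downarrow 0$ yields $\|B_n\|_\infty \xrightarrow{\mathbb{P}} 0$. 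Putting it together gives $\|g_n-g_*\|_\infty \tilde U_n = o_\mathbb{P}(\|(f_n-f_*,g_n-g_*)\|_\oplus + n^{-1/2})$; a symmetric argument handles $\|f_n-f_*\|_\infty \tilde U_n'$, which together with the $\tilde V_n, \tilde V_n'$ estimates yields the claim. The central difficulty is precisely the $B_n$ estimate: without tightness in the strictly stronger space $\mathcal{B}_{\mathbb{F},\beta}$ from \cref{lemma:CLTInHolder}, one could not place $W_n'$ in a compact subset of $\mathcal{C}(\Omega')$ and the Glivenko-Cantelli step would fail.
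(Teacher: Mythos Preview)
Your argument is correct and follows essentially the same strategy as the paper: insert the three-term decomposition of \cref{pr:three.terms} into the explicit integral form of $\Delta_n$ and control each piece via Glivenko--Cantelli over relatively compact classes (for the $U$- and $W$-parts) or direct vanishing (for the $V$-part). The only organizational difference is that you factor through \cref{lemma:develomentIncompact} first---separating the $(f_*-f_n)$ factor into the $\tilde V_n$ term before substituting---whereas the paper inserts \eqref{eq:three.terms} directly into $\int \psi_n(\cdot,y)\big[(f_*-f_n)(\cdot)+(g_*-g_n)(y)\big]\,d(Q-Q_n)(y)$ and works with the class-invariant combination $W_n(x)+W_n'(y)$.

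One small technical wrinkle: your passage from ``$\mathbb{L}(W_n,W_n')$ tight in $\mathcal{B}_{\mathbb{F},\beta}$'' to ``$W_n'$ tight in $\mathcal{C}(\Omega')$'' via continuity of $\mathbb{L}^{-1}$ is not quite immediate, since $\mathbb{L}^{-1}$ lands in the quotient $\Banach$, yielding tightness only of the equivalence class $[(W_n,W_n')]_\oplus$. This is easily repaired by normalizing the representative (e.g., $W_n(x_0)=0$), but the paper avoids the issue altogether by indexing its Glivenko--Cantelli class $\mathcal{F}'$ directly by $\{\|\mathbb{L}(f,g)\|_{\mathbb{F},\beta}\le 1\}$ and using $f(x)+g(y)$, which is invariant under $\sim_\oplus$.
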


\begin{proof}%
  We write $\Delta_n$ as in~\eqref{eq:DeltaSplit} and detail the proof for the first component only. As in~\eqref{DifferenceGammas}, 
  \begin{align}\label{eq:Delta1explicit}
 \Delta_n^{(1)}=  \int \psi_n(\cdot,y) (f_*-f_n+g_*(y)-g_n(y)) d(Q-Q_n)(y).
\end{align}
Recall the notation from \cref{pr:three.terms}, in particular $ U_n, V_n, W_n\in \mathcal{C}(\Omega)$ and $ U_n', V_n', W_n'\in \mathcal{C}(\Omega')$, and the compact sets $\mathcal{K}$ and $ \mathcal{K}' $. Inserting~\eqref{eq:three.terms} into~\eqref{eq:Delta1explicit}, we have
\begin{multline*}
    \Delta_n^{(1)}=  \underbrace{\|(f_*-f_n,g_*-g_n)\|_\oplus\int \psi_n(\cdot,y) (U_n+U'_n(y)) d(Q-Q_n)(y)}_{=:A_n}\\+ \underbrace{\|(f_*-f_n,g_*-g_n)\|_\oplus\int \psi_n(\cdot,y) (V_n+V'_n(y)) d(Q-Q_n)(y)}_{=:B_n}\\
    + \underbrace{ n^{-\frac{1}{2}}\int \psi_n(\cdot,y) (W_n+W'_n(y)) d(Q-Q_n)(y)}_{=:C_n}.
\end{multline*}
As $\psi_n(x, \cdot)$ is an equicontinuous and bounded family,
$$ \mathcal{F}:= \left\{ y \mapsto \psi_n(x, y) (f(x) + g(y)) : (f, g) \in \mathcal{K} \times \mathcal{K}', \, x \in \Omega, \, n\geq1 \right\} $$
is relatively compact in $\mathcal{C}(\Omega')$. 
Thus $\mathcal{F}$ is uniformly Glivenko--Cantelli and we get 
\begin{align}\label{eq:boundAn}
\begin{split}
    \| A_n\|_\infty & \leq   \|(f_*-f_n,g_*-g_n)\|_\oplus \, \sup_{g\in \mathcal{F}} \left\vert  \int g (y) d(Q-Q_n)(y) \right\vert\\
    &= o_{\mathbb{P}}\left(\|(f_*-f_n,g_*-g_n)\|_\oplus\right).
\end{split}
\end{align}
Next, recalling $ \|(V_n,V_n')\|_\infty\xrightarrow{\text{a.s.}} 0$, the bound
\begin{align*}
     \|B_n\|_\infty&\leq \|(f_*-f_n,g_*-g_n)\|_\oplus \|\psi_n\|_\infty \|(V_n,V_n')\|_\infty   \,\sup_{\|g\|_{\infty}\leq 1}\left\vert \int g (y) d(Q-Q_n)(y) \right\vert\\
     &\leq 2\|(f_*-f_n,g_*-g_n)\|_\oplus \|\psi_n\|_\infty \|(V_n,V_n')\|_\infty   
\end{align*}
yields 
\begin{align}\label{eq:boundBn}
    \| B_n\|_\infty & = o_{\mathbb{P}}\left(\|(f_*-f_n,g_*-g_n)\|_\oplus\right).
\end{align}
Finally, as the unit ball wrt.\ $\|\cdot\|_{\mathbb{F},\beta}$ is compact in $\mathcal{C}(\Omega)\times \mathcal{C}(\Omega')$ and $\mathbb{L}$ is a continuous bijection, the  class 
$$ \mathcal{F}':= \left\{ y \mapsto \psi_n(x, y) (f(x) + g(y)) : \|\mathbb{L}(f, g)\|_{\mathbb{F},\beta}\leq 1, \, x \in \Omega , \, n\geq1\right\} $$
is uniformly Glivenko--Cantelli. Since $\mathbb{L}(W_n,W_n')$ is tight in $\mathcal{B}_{\mathbb{F},\beta}$, $\|\mathbb{L}(W_n,W'_n)\|_{\mathbb{F},\beta}$ is stochastically bounded and we deduce
\begin{align}\label{eq:boundCn}
\begin{split}
     \|C_n\|_\infty &\leq n^{-\frac{1}{2}} \|\mathbb{L}(W_n,W'_n)\|_{\mathbb{F},\beta} \, \sup_{g\in \mathcal{F}'} \left\vert  \int g (y) d(Q-Q_n)(y) \right\vert\leq o_{\mathbb{P}}\left(n^{-\frac{1}{2}}\right). 
\end{split}
\end{align}
Combining the bounds \eqref{eq:boundAn}, \eqref{eq:boundBn} and \eqref{eq:boundCn}, we conclude 
$$ \| \Delta_n^{(1)}\|_{\infty}=o_{\mathbb{P}}\left(\|(f_*-f_n,g_*-g_n)\|_\oplus + n^{-\frac{1}{2}}\right).$$
The result follows since the denominator in~\eqref{eq:DeltaSplit} is continuous and bounded away from zero.
\end{proof}
\subsection{Deriving the central limit theorems}\label{se:proof.CLT.end}

We can now establish the central limit theorems. 
We begin with the one for the potentials.

\begin{proof}[Proof of \cref{th:CLT.pot}]
Armed with the results of \cref{se:consistency,se:key.decomposition}, the central limit theorem for the potentials follows by the standard reasoning in $Z$-estimation; see \cite[Theorem~3.3.1]{vanderVaart.1996}. We give the details for the sake of completeness. We can write~\eqref{eq:development1} as
    \begin{align*}
        \mathbb{L} (f_n-f_*,g_n-g_*) 
        &= [\tilde\Gamma-\tilde\Gamma_n](f_n,g_n) + \|(f_n-f_*,g_n-g_*)\|_{\oplus} (V_{n,\mathbb{L}},V_{n,\mathbb{L}}')\\
        &=  [\tilde\Gamma-\tilde\Gamma_n](f_*,g_*) - \Delta_n  + \|(f_n-f_*,g_n-g_*)\|_{\oplus} (V_{n,\mathbb{L}},V_{n,\mathbb{L}}').
    \end{align*}
    In view of \cref{lemma:CLTInHolder} and \cref{Coro:exchangeGamman}, we deduce that
    $$
      \|\mathbb{L} (f_n-f_*,g_n-g_*)\|_\oplus = o_{\mathbb{P}}\left(\|(f_*-f_n,g_*-g_n)\|_\oplus \right) + \mathcal{O}_{\mathbb{P}}\left(n^{-\frac{1}{2}}\right).
    $$
    Recalling that $\mathbb{L}$ has a bounded inverse (\cref{pr:Invertible}), it follows that
    \begin{equation*}
   \|(f_n-f_*,g_n-g_*)\|_\oplus \leq %
   o_{\mathbb{P}}\left(\|(f_*-f_n,g_*-g_n)\|_\oplus \right) + \mathcal{O}_{\mathbb{P}}\left(n^{-\frac{1}{2}}\right).
\end{equation*}
    As we also have $\|(f_n-f_*,g_n-g_*)\|_\oplus=o_{\mathbb{P}}(1)$ by \cref{lemma.A.s}, this implies the rate
    $\|(f_n-f_*,g_n-g_*)\|_\oplus=\mathcal{O}_{\mathbb{P}}\left(n^{-\frac{1}{2}}\right)$. Substituting this into~\eqref{eq:development1}, we obtain
$$ \left\|\tilde\Gamma(f_n, g_n) - \tilde\Gamma_n(f_n, g_n) - \mathbb{L} (f_n - f_*, g_n - g_*)\right\|_\oplus = o_{\mathbb{P}}\left(n^{-\frac{1}{2}}\right)$$
which, using again \cref{Coro:exchangeGamman} and $\|(f_n-f_*,g_n-g_*)\|_\oplus=o_{\mathbb{P}}(1)$, leads to
\begin{align}\label{eq:Gamma.tilde.vs.L}
\left\|\tilde\Gamma(f_*, g_*) - \tilde\Gamma_n(f_*, g_*) - \mathbb{L} (f_n - f_*, g_n - g_*)\right\|_\oplus = o_{\mathbb{P}}\left(n^{-\frac{1}{2}}\right). \end{align}
Denote by $\tilde{\mathbf{G}}$ the right-hand side in \cref{lemma:CLTInHolder}. As $n^{1/2}\big(\tilde\Gamma(f_*, g_*) - \tilde\Gamma_n(f_*, g_*)\big) \xrightarrow{w} \tilde{\mathbf{G}}$ in $\Banach$ by \cref{lemma:CLTInHolder}, \eqref{eq:Gamma.tilde.vs.L} implies that 
$$
n^{1/2}\,\mathbb{L}(f_n - f_*, g_n-g_*) \xrightarrow{w} \tilde{\mathbf{G}}
$$
in $\Banach$ as well.
Using that $\mathbb{L}^{-1}$ is continuous (\cref{pr:Invertible}) and applying the continuous mapping theorem, we conclude that
$$
n^{1/2}\,(f_n - f_*, g_n-g_*) \xrightarrow{w} \mathbb{L}^{-1}\tilde{\mathbf{G}},
$$
which was the claim.
\end{proof}

We continue with the central limit theorem for the optimal cost.

\begin{proof}[Proof of \cref{th:CLT.cost}]  
    From the fact that $(f_*, g_*)$ and $(f_n, g_n)$ are optimizers of the dual problem~\eqref{eq:dual1} for $(P,Q)$ and $(P_n,Q_n)$, respectively, we deduce the two inequalities
\begin{multline*}
    \ROT(P_n,Q_n)-\ROT(P,Q)\\
    \geq \int\big\{ 
    f_*(x) + g_*(y) - \psi\left({f_*(x)+g_*(y)-c(x,y)}\right) 
    \!\big\}d(  (P_n \otimes   Q_n)- (P \otimes   Q) )(x,y) 
\end{multline*}
and 
\begin{multline*}
    \ROT(P_n,Q_n)-\ROT(P,Q)\\
    \leq \int\big\{ 
    f_n(x) + g_n(y) - \psi\left({f_n(x)+g_n(y)-c(x,y)}\right) 
    \!\big\}d(  (P_n \otimes   Q_n)- (P \otimes   Q) )(x,y). 
\end{multline*}
Using our shorthands~\eqref{eq:xi.def} and dropping the integration variables from the notation, the difference between the upper and the lower bound is
    \begin{multline}\label{eq:ExchangeEmpPopCoroProof}
         \int\big\{ f_n(x) + g_n(y) - \psi\left({f_n(x)+g_n(y)-c(x,y)} \right)\\
        -
    \left(f_*(x) + g_*(y) - \psi\left({f_*(x)+g_*(y)-c(x,y)}\right) \right)
    \!\big\}d(  (P_n \otimes   Q_n)- (P \otimes   Q) )(x,y)\\
    =\int\big\{ (\xi_n - \xi_*) - (\psi(\xi_n) - \psi(\xi_*))\!\big\}d(  (P_n \otimes   Q_n)- (P \otimes   Q)).
    \end{multline}
Using the fundamental theorem of calculus in the same way as  below~\eqref{eq:DeltaSplit}, and setting $\tilde{\psi}_n := 1+ \int_0^1 \psi'\big( (1-\lambda)\xi_n +\lambda \xi_*\big)d\lambda$, the right-hand side of \eqref{eq:ExchangeEmpPopCoroProof} can be written as
    \begin{align*}%
    \int \tilde{\psi}_n \cdot (\xi_n - \xi_*) d(  (P_n \otimes   Q_n)- (P \otimes   Q) )
    \end{align*}
    which has the same structure as~\eqref{eq:Delta1explicit}, except that now the integration is over both variables. We proceed as below~\eqref{eq:Delta1explicit}: insert the decomposition~\eqref{eq:three.terms}, split the integral into three terms, and treat them with the same arguments as in~\cref{eq:boundAn,eq:boundBn,eq:boundCn}. The result is that
   \begin{align*}%
    \eqref{eq:ExchangeEmpPopCoroProof} = o_{\mathbb{P}}\left(\|(f_*-f_n,g_*-g_n)\|_\oplus + n^{-\frac{1}{2}}\right).
    \end{align*}    
    From \cref{th:CLT.pot} we further know that $\|(f_*-f_n,g_*-g_n)\|_\oplus=\mathcal{O}_{\mathbb{P}} \big(n^{-\frac{1}{2}} \big)$, whence we conclude that $\eqref{eq:ExchangeEmpPopCoroProof} = o_{\mathbb{P}} \big(n^{-\frac{1}{2}}\big)$. 
    Recall that \eqref{eq:ExchangeEmpPopCoroProof} is the difference of the upper and lower bounds for $\ROT(P_n,Q_n)-\ROT(P,Q)$. As a consequence, $\ROT(P_n,Q_n)-\ROT(P,Q)$ equals the lower bound up to $o_{\mathbb{P}} \big(n^{-\frac{1}{2}}\big)$; to wit,
    \begin{multline*}
    \ROT(P_n,Q_n)-\ROT(P,Q) +o_{\mathbb{P}}\left(n^{-\frac{1}{2}}\right)\\
   = \int\big\{ 
    \left(f_*(x) + g_*(y) - \psi\left({f_*(x)+g_*(y)-c(x,y)}\right) \right)
    \!\big\}d(  (P_n \otimes   Q_n)- (P \otimes   Q) )(x,y). %
\end{multline*}
We can now apply the central limit theorem for $U$-statistics \cite[Theorem~12.6]{Vaart.1998} to derive the result. (In fact, to obtain the formula for the variance $\sigma^2$, it is easier to directly use the formula of the H\'ajek projection stated just before \cite[Theorem~12.6]{Vaart.1998}.)
\end{proof}

Finally, we show the central limit theorem for the couplings.

\begin{proof}[Proof of \cref{th:CLT.plans}]
Note that 
$$ \int \eta d(\pi_n - \pi)= \int \bar{\eta} d(\pi_n - \pi).  $$
Recalling from \cref{pr:ROTprelims} that $\frac{d\pi}{dP\otimes Q} = \psi'(\xi_*)$ and $\frac{d\pi_n}{dP_n \otimes Q_n} = \psi'(\xi_n)$, we obtain
\begin{align*}
    \int \eta d(\pi_n - \pi)= &\underbrace{\int \bar{\eta}\{ \psi'(\xi_n)-\psi'(\xi_*)\} d((P_n\otimes Q_n)-(P\otimes Q))}_{=:A_n} \\
   & + \underbrace{\int \bar{\eta} \{ \psi'(\xi_n)-\psi'(\xi_*)\} d(P\otimes Q)}_{=:B_n} \\
   & + \underbrace{\int \bar{\eta}\psi'(\xi_*)  d((P_n\otimes Q_n)-(P\otimes Q))}_{=:C_n}.
\end{align*}
Arguing as in \eqref{eq:boundAn} and then using \cref{th:CLT.pot},  we have
$$
A_n= o_{\mathbb{P}}\left(\|(f_*-f_n,g_*-g_n)\|_\oplus\right) =o_{\mathbb{P}}\left(n^{-\frac{1}{2}}\right).
$$
Turning to $B_n$, a first-order Taylor development of $ \psi'$ yields 
\begin{align*}
B_n 
&= \int  ( \xi_n-\xi_*)  \bar{\eta}\psi'' (\xi_*)d(P\otimes Q) +o_{\mathbb{P}}(\|(f_n-f_*,g_n-g_*)\|_{\oplus} ) \\
& = \int  ( \xi_n-\xi_*)  \bar{\eta}\psi'' (\xi_*)d(P\otimes Q) + o_{\mathbb{P}}\left(n^{-\frac{1}{2}}\right).
\end{align*}
Moreover, we see from~\eqref{eq:Gamma.tilde.vs.L} that
\begin{align*}
\left\|(f_n - f_*, g_n - g_*) - \mathbb{L}^{-1}\left( [\tilde\Gamma - \tilde\Gamma_n](f_*, g_*) \right) \right\|_\oplus = o_{\mathbb{P}}\left(n^{-\frac{1}{2}}\right).
\end{align*}
After inserting the definitions of $\tilde\Gamma$ and $\tilde\Gamma_n$, this yields the third equality in
\begin{align*}
    B_n&= \int   ( \xi_n-\xi_*)\bar{\eta} \psi''(\xi_*)  d(P\otimes Q) +o_{\mathbb{P}}\left(n^{-\frac{1}{2}}\right)\\
    &=  \int  ( \oplus(f_n-f_*, g_n-g_*))\bar{\eta} \psi''(\xi_*) d(P\otimes Q) +o_{\mathbb{P}}\left(n^{-\frac{1}{2}}\right)\\
    &= -\int    \oplus\left( \mathbb{L}^{-1}\left[\left( \begin{array}{c}
           \frac{\int \psi'(\xi_*( \cdot ,y)) d(Q_n-Q)(y) } {\int \psi''(\xi_*( \cdot ,y)) dQ(y)}   \\
            \frac{\int \psi'(\xi_*(x, \cdot ))d(P_n-P)(x)}{\int \psi''(\xi_*(x, \cdot ))  dP(x)}  
        \end{array}\right)\right]_{\oplus} \right)\bar{\eta} \psi''(\xi_*) d(P\otimes Q) +o_{\mathbb{P}}\left(n^{-\frac{1}{2}}\right)\\
        &= -\frac{1}{n^2} \sum_{i,j=1}^n \int \bigg\{   \oplus\left( \mathbb{L}^{-1}\left[\left( \begin{array}{c}
           \frac{ \psi'(\xi_*( \cdot ,Y_j)) - \int \psi'(\xi_*( \cdot ,y))dQ (y) } {\int \psi''(\xi_*( \cdot ,y)) dQ(y)}   \\
            \frac{ \psi'(\xi_*(X_i, \cdot )-\int \psi'(\xi_*(x, \cdot ))dP(x)) }{\int \psi''(\xi_*(x, \cdot )) dP(x)}  
        \end{array}\right)\right]_{\oplus} \right)\bar{\eta} \psi'' (\xi_*) \bigg\} d(P\otimes Q) \\
        & \quad\,+o_{\mathbb{P}}\left(n^{-\frac{1}{2}}\right) =: \bar{B}_n +o_{\mathbb{P}}\left(n^{-\frac{1}{2}}\right).
\end{align*}
Note also that $\bar{B}_n$ is centered (w.r.t.\ $\mathbb{P}$) because the precomposition of a centered random variable with a bounded linear operator is again centered. Turning to $C_n$, we can write 
\begin{align*}
   C_n= \frac{1}{n^2}\sum_{i,j=1}^n \bar{\eta}(X_i,Y_j)\psi'(\xi_*(X_i,Y_j)) \,-\,\int \bar{\eta}\psi'(\xi_*)  d(P\otimes Q)
\end{align*}
which is also centered. Moreover, the variances of $\bar{B}_n$ and $C_n$ are bounded by a constant depending only on $\|\xi_*\|_{\infty}$, $ \|\eta\|_{\infty}$,
$$ \left\|\left(\int \psi''(\xi_*( \cdot ,y))  dQ(y)\right)^{-1}\right\|_{\infty}\quad {\rm and} \quad  \left\|\left(\int \psi''(\xi_*( x,\cdot ))  dP(x)\right)^{-1}\right\|_{\infty}.$$
In summary, up to negligible terms,  
$\int \eta d(\pi_n - \pi)$ is a $U$-statistic with finite variance. We can then apply the central limit theorem for  $U$-statistics (see \cite[Theorem~12.6]{Vaart.1998}) to conclude the result. (Once again, to obtain the formula for the variance $\sigma^2(\eta)$, it is easier to directly use the formula of the H\'ajek projection stated just before \cite[Theorem~12.6]{Vaart.1998}.)
\end{proof}

\begin{funding}
SE is grateful for support by the German Research Foundation through Project 553088969 as well as the Cluster of Excellence “Machine Learning --- New Perspectives for Science” (EXC 2064/1 number 390727645). MN acknowledges funding by NSF Grants DMS-2407074 and DMS-2106056.
\end{funding}

\bibliographystyle{imsart-number}
\bibliography{Biblio}

\pagebreak[4]
\appendix

\section{Simulations}\label{sec:simulations}
In this section we provide numerical examples to illustrate our results, and circumstantial evidence that the obtained results may extend to quadratic regularization. We recall from \cref{ex:divergences} that quadratic regularization is a boundary case of \cref{as:divergence}.

For our experiments we need to solve the population problem very accurately. In EOT, one typically uses quadratic cost with Gaussian marginals in such situations, as that example allows for a closed-form solution (e.g., \cite{delbarrio.2020.statisticaleffectentropicregularization,Mallasto.2021}). The first subsection below details the only example with semi-closed form solution for more general divergences that we are aware of. The second subsection uses that example to numerically study the convergence rate for two divergences.

\subsection{Example with semi-explicit population potentials}\label{sec:explicit}
Let $\mathbb{T}^d=\R^d/\Z^d$ be the flat torus and let $\mu$ be the volume measure on $\mathbb{T}^d$. 
Denote by $[x]$ the equivalence class of a vector $x\in \R^d$. 
The distance on $\mathbb{T}^d$ is given by 
$$ d([x],[y])=\inf_{z\in \Z^d}\|x-y-z\| .$$
We consider the marginals $P=Q=\mu$ and the transport cost $c([x],[y])=d^2([x],[y])$. In this symmetric ``self-transport'' problem, the population potentials $(f_*,g_*)$ can be chosen such that $g_*=f_*$; this choice also removes the ambiguity about an additive constant. Indeed, $f_*$ is uniquely determined by the equation
$$ 1=\int \psi'\left(\frac{f_*([x])+f_*([y])-d^2([x],[y])}{\eps}\right) d\mu([y]), \quad  x \in \mathbb{R}^d.$$
Define the constant $C_{\eps,d,\psi}\in\mathbb{R}$ via
$$ 1=\int \psi'\left(\frac{2C_{\eps,d,\psi}-d^2([0],[y])}{\eps}\right) d\mu([y]).$$
For fixed $x\in \R^d$, consider the  translation 
$ y\mapsto T_x(y) := x+y$ on $\mathbb{R}^d$, which induces the map
$$ [y]\mapsto S_{x}([y]) := [T_x(y)] =[x+y]$$
on the quotient space $\mathbb{T}^d=\R^d/\Z^d$.  Note that  $S_{x}$  is an isometry
 and that the ``translation invariance'' of the volume measure implies $({S_x})_\#\mu=\mu$. Hence 
\begin{align*}
1&= \int \psi'\left(\frac{2C_{\eps,d,\psi}-d^2([0],[y])}{\eps}\right) d\mu(y)
=\int\psi'\left(\frac{2C_{\eps,d,\psi}-d^2([x],S_x([y]))}{\eps}\right) d\mu(y)\\
 &=\int\psi'\left(\frac{2C_{\eps,d,\psi}-d^2([x],[y])}{\eps}\right) d({S_x})_\#\mu(y)
 =\int \psi'\left(\frac{2C_{\eps,d,\psi}-d^2([x],[y])}{\eps}\right) d\mu(y)
\end{align*}
for all $x\in \R$, showing that $f_*\equiv C_{\eps,d,\psi}$.

\subsection{Numerical illustrations}

While the above example allows for explicit calculation, we emphasize that it actually leads to a degenerate regime, since the variance of the limits is zero. This is most easily seen in \cref{th:CLT.cost}, where the variance is governed by the terms $f_*(X)$ and $\int \psi\left(\frac{f_*(X)+g_*(y)-c(X,y)}{\eps} \right) d\mu(y)$ for $X \sim \mu$ (as well as the symmetric terms for $Y$). We see that both terms are constant due to $f_* = g_* \equiv C_{\varepsilon, d, \psi}$ and the shift invariance of the cost $c = d^2$ and $\mu$. Hence, we expect to see rates which are even faster than $n^{-1/2}$, irrespective of dimension. While ideally we would like to also illustrate the results in non-degenerate examples, we are not aware of further examples where explicit solutions exist, and the latter are required to correctly visualize the asymptotic regime. 

To obtain approximate solutions for the empirical marginals $P_n$ and $Q_n$, we use the Sinkhorn-like iterations described for instance in \cite{LorenzMahler.22}. The results are illustrated in Figure~\ref{fig:qotexample}. As predicted by \cref{th:CLT.cost}, we observe rates even faster than $n^{-1/2}$, and the slopes are similar for each dimension. (This is in contrast to upper bounds in \cite{BayraktarEckstein.2025.BJ}, which would only yield $\max\{n^{-1/2}, n^{-2/d}\}$ in this case.) We observe similar slopes for $\varphi(x) = \frac{2}{3}(x^{3/2} -1)$ and $\varphi(x) = \frac{1}{2} (x^2 - 1)$. Note that the former divergence satisfies \cref{as:divergence} as the conjugate is cubic, whereas the latter (quadratic) divergence does not.

\begin{figure}\label{fig:qotexample}
	\begin{minipage}{0.33\textwidth}
		\hspace*{-0.4cm}\includegraphics[width=1.15\textwidth]{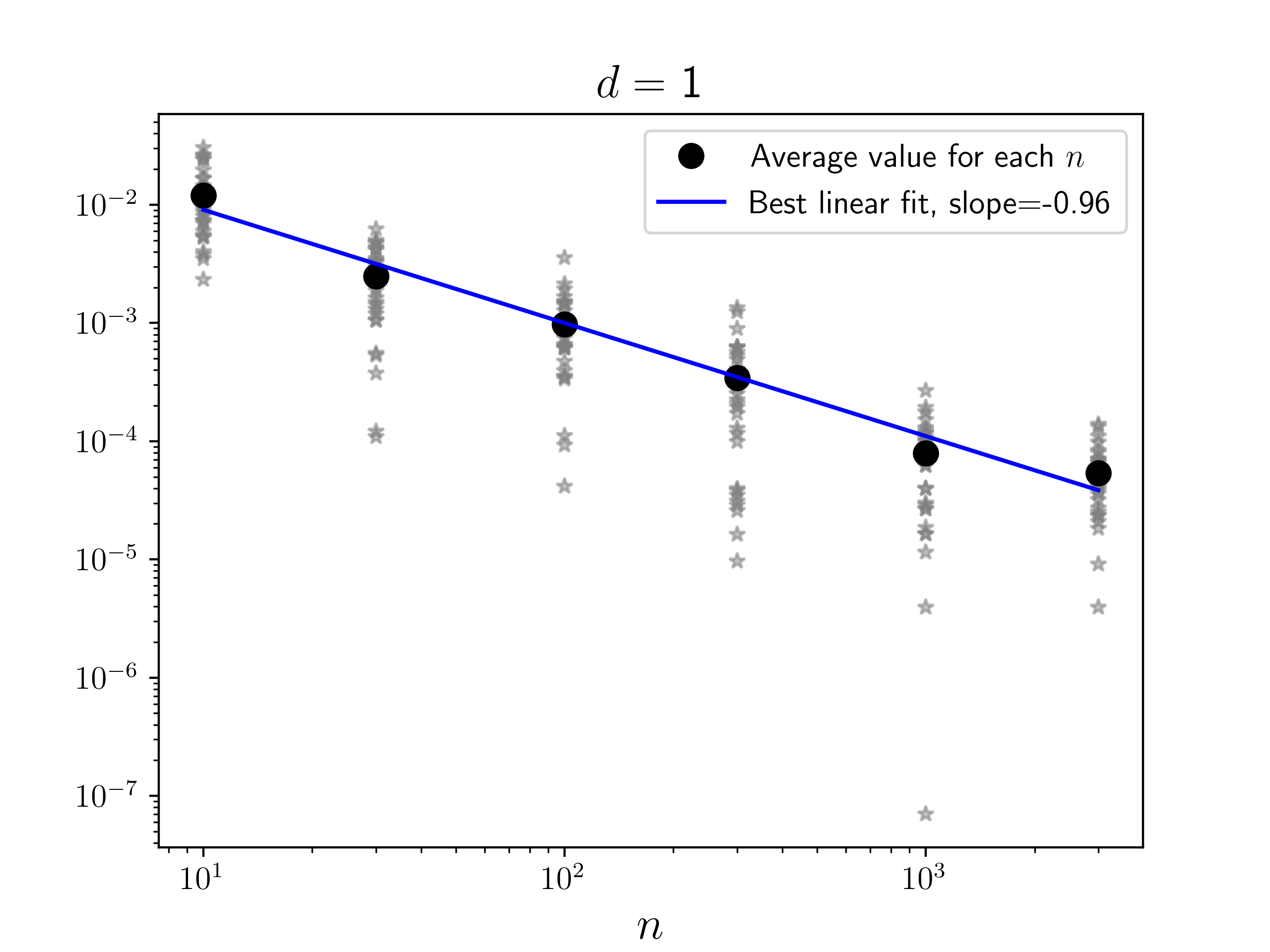}
	\end{minipage}%
	\begin{minipage}{0.33\textwidth}
	\hspace*{-0.1cm}\includegraphics[width=1.15\textwidth]{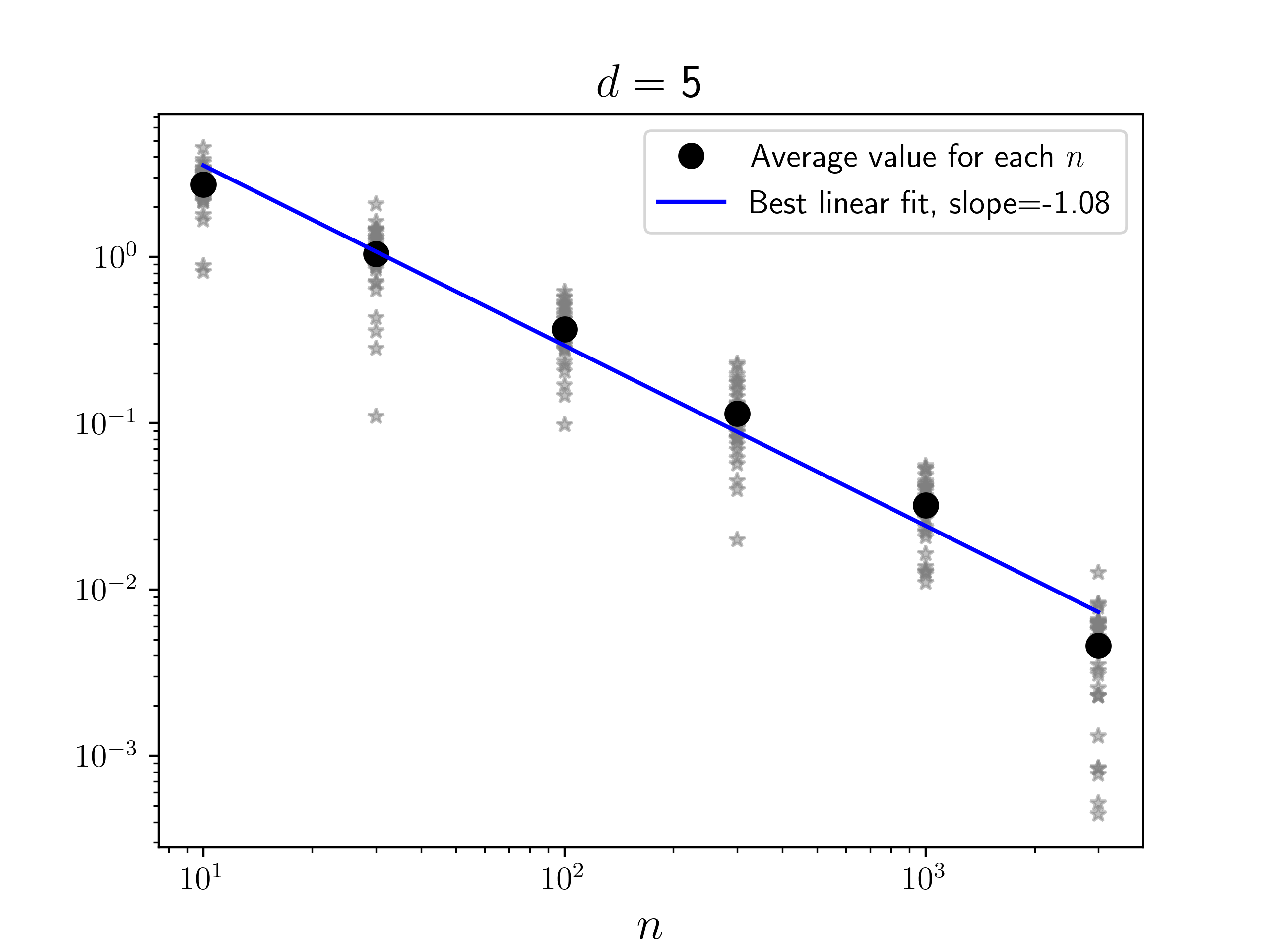}
\end{minipage}%
	\begin{minipage}{0.33\textwidth}
	\hspace*{0.1cm}\includegraphics[width=1.15\textwidth]{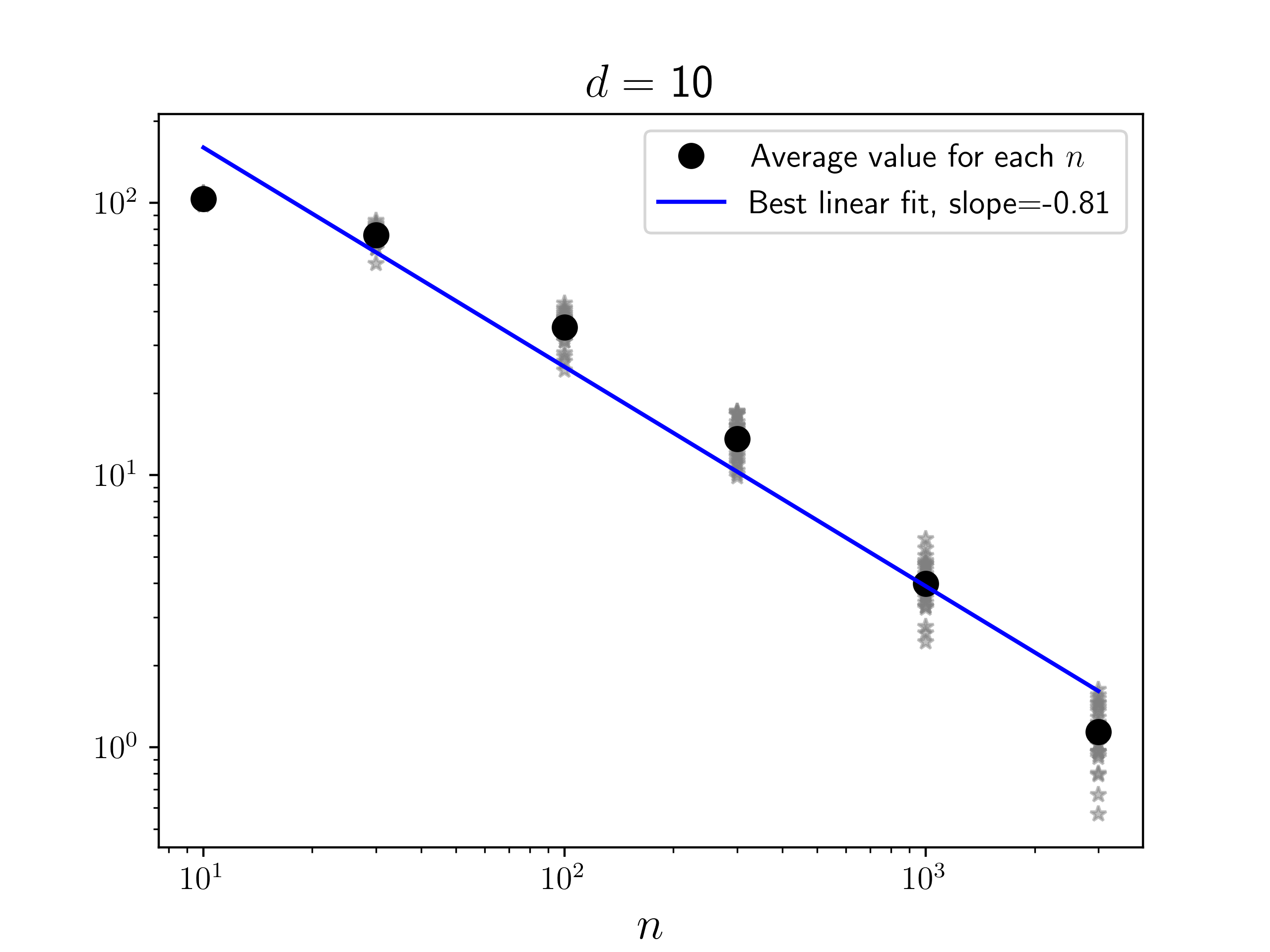}
\end{minipage}\\
	\begin{minipage}{0.33\textwidth}
		\hspace*{-0.4cm}\includegraphics[width=1.15\textwidth]{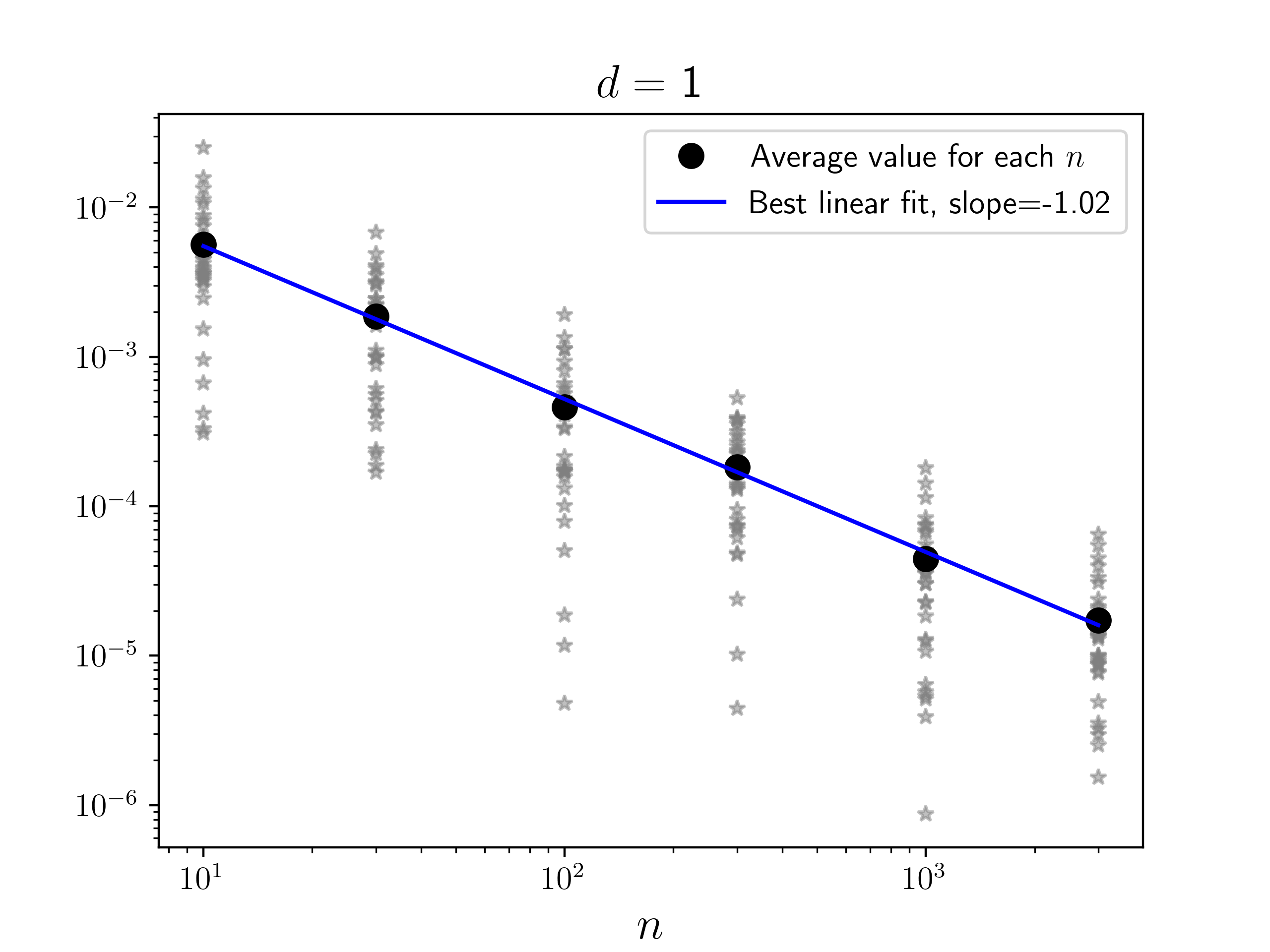}
	\end{minipage}%
	\begin{minipage}{0.33\textwidth}
	\hspace*{-0.1cm}\includegraphics[width=1.15\textwidth]{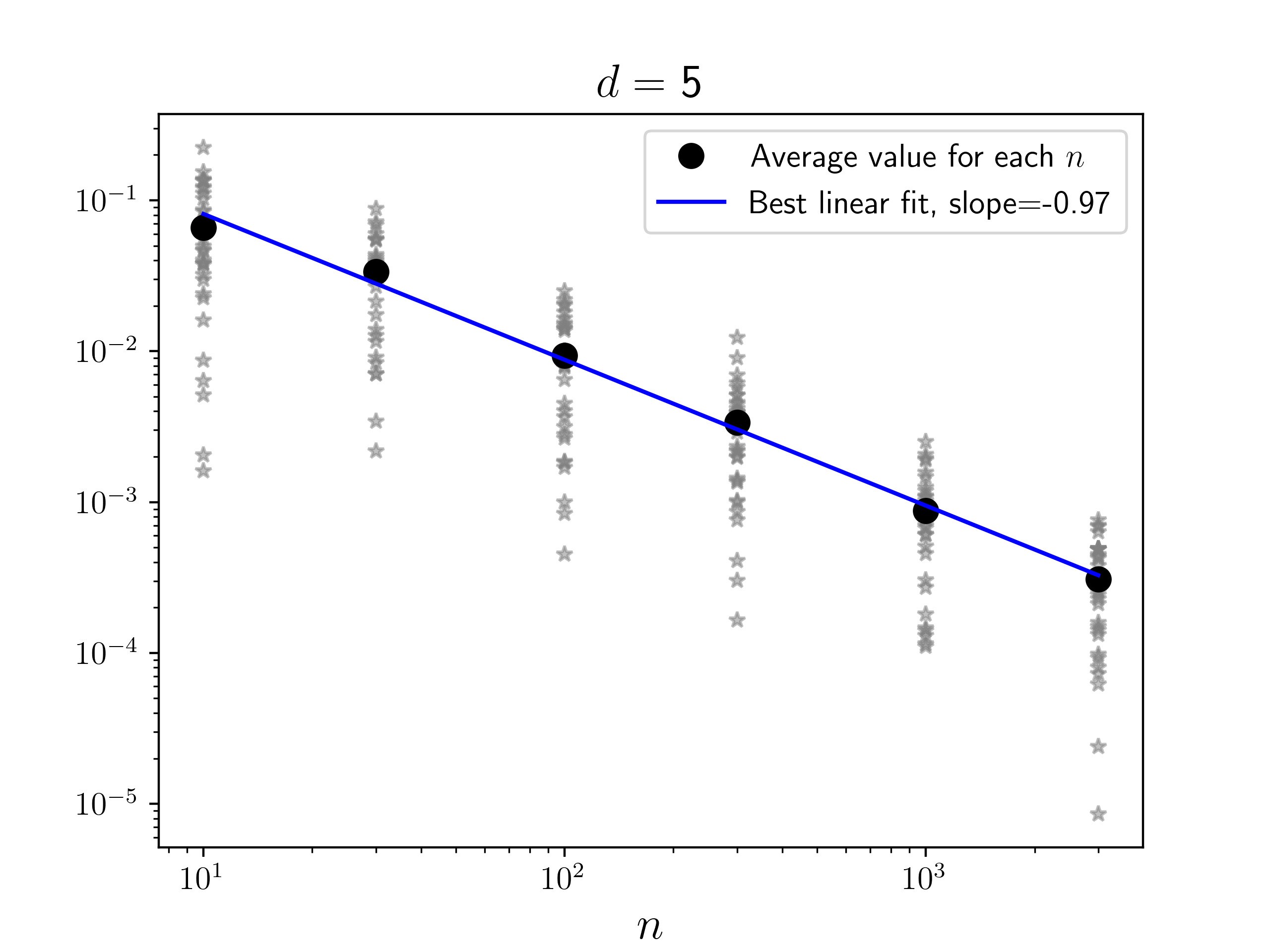}
\end{minipage}%
	\begin{minipage}{0.33\textwidth}
	\hspace*{0.1cm}\includegraphics[width=1.15\textwidth]{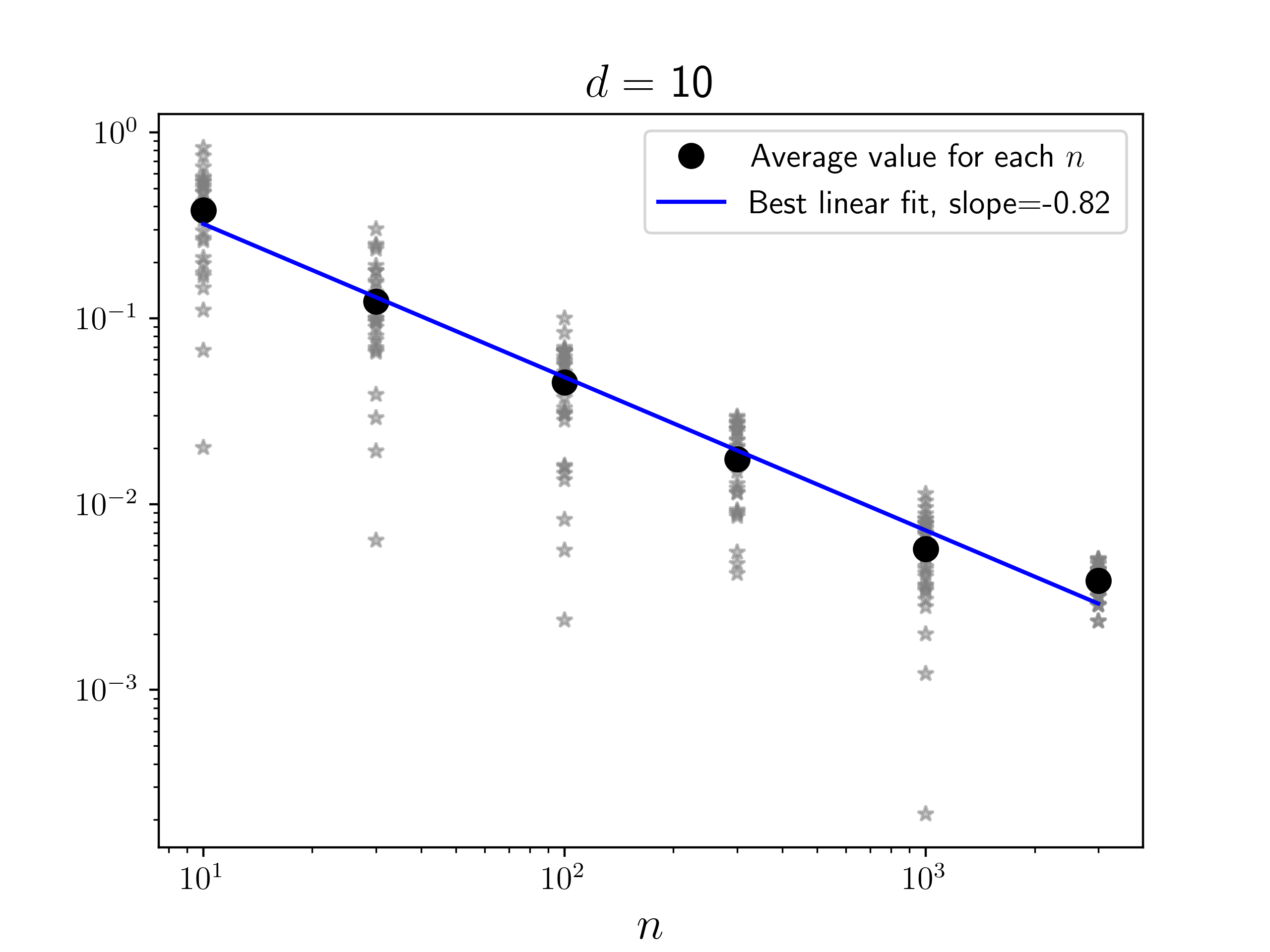}
\end{minipage}%
	\caption{Log-log plots of $\left|\mathrm{OT}_\eps(P_n, Q_n) - \mathrm{OT}_\eps(P, Q)\right|$ for dimensions \mbox{$d \in \{1, 5, 10\}$} (left, middle, right) and two different divergences, namely $\varphi(x) = \frac{2}{3}(x^{3/2} -1)$ leading to $\psi(x) = \frac{1}{3} x_+^3 + \frac{2}{3}$ (top) and $\varphi(x) = \frac{1}{2} (x^2 - 1)$ leading to $\psi(x) = \frac{1}{2} x_+^2 + \frac{1}{2}$ (bottom). We use $\eps=0.5$. We numerically observe approximate rates $n^{\alpha}$ with $\alpha \approx -1$ for all cases, irrespective of dimension. The approximation is based on thirty independent evaluations of $\left|\mathrm{OT}_\eps(P_n, Q_n) - \mathrm{OT}_\eps(P, Q)\right|$ illustrated by the gray dots, while the black dots are the respective averages for each $n\in\{10, 30, 100, 300, 1000, 3000\}$.}
\end{figure}

\section{Central Limit Theorem in H\"older Space}\label{se:HolderCLT}

The aim of this section is to provide a central limit theorem in the H\"{o}lder space $\mathcal{C}^{0,\beta}(\Omega)$ with $0\leq \beta< 1$ for  
an i.i.d.\ sequence of random variables with values in $\mathcal{C}^{0,1}(\Omega)$ and finite second order moments. While it is possible that this result is known, we have not found a suitable reference in the literature. We first recall some preliminaries on type II spaces, following the excellent survey \cite{Naresh.CLT.1976}. 

\begin{definition}
    Let $(E, \|\cdot\|_E)$ and $(F, \|\cdot\|_F)$ be separable Banach spaces such that $E \subset F$. The pair $ (E,F)$ is said to be of type~II if there exists $C\geq 0$ such that  
    $$ \mathbb{E}\left[\left\|\sum_{i=1}^n X_i \right\|_{F}^2 \right] \leq C\cdot \sum_{i=1}^n\mathbb{E}\left[\left\| X_i \right\|_E^2 \right]$$
    for any $n\in\mathbb{N}$ and any sequence $X_1, \dots, X_n$ of independent $E$-valued random variables with $\mathbb{E}[\|X_i\|_E]<\infty$ for all $i=1, \dots, n$. 
\end{definition}
The type II property is fundamental to derive central limit theorems 
 in Banach spaces (see \cite{Naresh.CLT.1976} and the references therein). To state the central limit theorem, recall that the expectation $\mathbb{E}[X]$ of a random variable $X$ with $\mathbb{E}[\|X\|_E]<\infty$ in a Banach space $E$ is defined as the following element of the bidual $E'' $: 
$$ E'  \ni g\mapsto \mathbb{E}[g(X)]\in \R,$$
where $E'$ denotes the dual of $E$. A measure on $E$ is called Gaussian if its finite-dimensional distributions are Gaussian; i.e., given $g_1,\dots,g_n\in E'$, the distribution on $\mathbb{R}^n$ induced by $(g_1,\dots,g_n)$ is Gaussian. 
A random variable $\mathbf{G}\in E $ is called Gaussian if its distribution is Gaussian.

\begin{theorem}[{\cite[Theorem 3.1]{Naresh.CLT.1976}}]\label{pr:type2.CLT}
    Let $ (E,F)$ be of type~II. Then  every sequence $\{X_n\}_{n} \subset E$ of i.i.d.\ random vectors with $\mathbb{E}[\|X_i\|_E^2]<\infty$ and $  \mathbb{E}[X_i]=0$ satisfies the central limit theorem in $E$; i.e., there exists a centered Gaussian random variable $ \mathbf{G}\in E $ such that 
    $$ \sqrt{n}\left( \frac{1}{n}\sum_{i=1}^n X_i\right)\xrightarrow{w} \mathbf{G} \quad\text{in}\quad E.$$
\end{theorem}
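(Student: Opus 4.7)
The plan is the classical two-step scheme for Banach-space central limit theorems: identify a candidate Gaussian limit through finite-dimensional projections, and separately establish tightness of the laws $\mathcal{L}(S_n/\sqrt{n})$ on $E$, where $S_n := \sum_{i=1}^n X_i$. The limit $\mathbf{G}$ will be the centered Gaussian on $E$ whose covariance bilinear form is $Q(f,g) = \mathbb{E}[f(X_1)g(X_1)]$ for $f,g \in E'$; this is finite because $\mathbb{E}[\|X_1\|_E^2] < \infty$. The Cram\'er--Wold device together with the classical multivariate CLT in $\mathbb{R}^k$, applied to the i.i.d.\ centered real random vectors $(f_1(X_i),\dots,f_k(X_i))_i$ with finite second moments, shows that every finite-dimensional projection of $S_n/\sqrt{n}$ converges weakly to the corresponding Gaussian marginal, so any weak limit in $E$ must coincide with $\mathbf{G}$.

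The core of the argument is tightness, and the type II property enters via a truncation. I would set $Y_i^R := X_i \mathbf{1}_{\{\|X_i\|_E \leq R\}} - \mathbb{E}[X_i \mathbf{1}_{\{\|X_i\|_E \leq R\}}]$ and $Z_i^R := X_i - Y_i^R$, so $\mathbb{E}[Z_i^R] = 0$ since $\mathbb{E}[X_i] = 0$. Applying the type II inequality to the residuals gives
\[
\mathbb{E}\Bigl[\Bigl\|\tfrac{1}{\sqrt{n}}\sum_{i=1}^n Z_i^R\Bigr\|_F^2\Bigr] \leq C\,\mathbb{E}[\|Z_1^R\|_E^2],
\]
which vanishes as $R \to \infty$ by dominated convergence (using $\mathbb{E}[\|X_1\|_E^2]<\infty$). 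For each fixed $R$, the bounded pieces $Y_i^R$ take values in a separable subspace of $E$ on which a compactness argument combined with finite-dimensional CLT delivers weak convergence in $E$ to the Gaussian with truncated covariance. Pasting the truncated and residual contributions together shows that $S_n/\sqrt{n}$ is uniformly close, up to a vanishing $F$-error, to a sequence whose laws are tight in $E$.

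The main obstacle I anticipate is precisely the norm mismatch: the type II inequality controls only the $F$-norm of partial sums, whereas tightness is required in the potentially stronger $E$-topology. Bridging this gap is where the structural assumption that $(E,F)$ is of type II pays off, ensuring that the Gaussian measure identified through finite-dimensional marginals in fact lives on $E$ and that the small-in-$F$ residual can be lifted to small-in-$E$ through a separability/approximation argument. Once tightness in $E$ is established, Prokhorov's theorem combined with the finite-dimensional convergence yields the CLT in $E$.
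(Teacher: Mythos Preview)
The paper does not prove this theorem; it is quoted from \cite{Naresh.CLT.1976} and used as a black box in the proof of \cref{th:CLT.Holder}. So there is no ``paper's own proof'' to compare against.

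That said, two remarks on your proposal. First, the statement as printed almost certainly contains a typo: the conclusion should be a CLT in~$F$, not in~$E$. This is confirmed by how the paper applies the result in \cref{th:CLT.Holder}, where $E=\mathcal{C}^{0,1}(\Omega)$, $F=\mathcal{C}^{0,\beta}(\Omega)$, and the CLT is claimed in~$F$. The type~II inequality only controls $\|\cdot\|_F$, so a CLT in the stronger topology of~$E$ cannot follow from the hypothesis alone. You correctly flag this mismatch, but then try to bridge it by hand-waving (``the structural assumption\dots pays off''); in fact there is nothing to bridge once the typo is corrected.

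Second, even for the CLT in~$F$, your argument for the truncated pieces has a gap. You assert that the bounded random variables $Y_i^R$ satisfy a CLT in~$E$ (or~$F$) by ``a compactness argument combined with finite-dimensional CLT,'' but boundedness in an infinite-dimensional Banach space does not by itself yield tightness of normalized partial sums. The standard route is to further approximate the bounded $Y_i^R$ by \emph{simple} (finitely-valued) random variables in $E$-norm; simple random variables live in a finite-dimensional subspace where the classical CLT applies, and the type~II inequality (applied a second time) controls the approximation error in~$F$. Your residual estimate for $Z_i^R$ is fine, but the inner step needs this additional layer.
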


To establish the desired central limit theorem, we need to show that the Banach pair $(\mathcal{C}^{0,1}(\Omega), \mathcal{C}^{0,\beta}(\Omega))$ is of type II. To that end, we first state an alternative characterization and a sufficient condition for being of type II. Recall that a Rademacher sequence $ \{\epsilon_n\}_{n}$ is a sequence of i.i.d.\ (scalar) random variables with 
$\mathbb{P}(\epsilon_n=1)=\mathbb{P}(\epsilon_n=-1)=\frac{1}{2}.$

\begin{proposition}[{\cite[Theorems 3.1 and 2.3]{Naresh.CLT.1976}}]\label{pr:type2.sufficient}
    Let $(E, \|\cdot\|_E)$ and $(F, \|\cdot\|_F)$ be separable Banach spaces such that $E \subset F$. The following are equivalent:
    \begin{enumerate}
        \item[(i)] The pair $ (E,F)$ is of type~II,
        \item[(ii)] for any sequence $\{x_n\}_n\subset E$ such that $\sum_{n=1}^\infty\| x_n\|_E^2<\infty$, and any Rademacher sequence $ \{\epsilon_n\}_{n}$, the sum $\sum_{n=1}^\infty  x_n \epsilon_n $ converges a.s.\ in $F$.
    \end{enumerate}
    Moreover, the following is a sufficient condition for (i) and (ii):
    \begin{enumerate}
        \item[(iii)] for any sequence $\{x_n\}_n\subset E$ such that $\sum_{n=1}^\infty\| x_n\|_E^2<\infty$, and any i.i.d.\ sequence $ \{\eta_n\}_{n}$ of standard Gaussian random variables, the sum $\sum_{n=1}^\infty  x_n \eta_n $ converges a.s.\ in~$F$.
    \end{enumerate}
\end{proposition}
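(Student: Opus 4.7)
My plan is to handle the three implications (i) $\Rightarrow$ (ii), (ii) $\Rightarrow$ (i), and (iii) $\Rightarrow$ (ii) separately, relying on three standard tools from probability in Banach spaces: It\^o--Nisio's theorem on the equivalence between convergence in probability and a.s.\ convergence for series of independent symmetric random variables, Kahane's inequality on the equivalence of $L^p$-moments of Rademacher series in an arbitrary Banach space, and Kahane's contraction principle. For (i) $\Rightarrow$ (ii), I would apply the type II inequality to the independent symmetric $E$-valued variables $x_n \epsilon_n$ with $n = M+1, \ldots, N$ to obtain
$$
\mathbb{E}\left[\left\|\sum_{n=M+1}^N x_n \epsilon_n\right\|_F^2\right] \leq C \sum_{n=M+1}^N \|x_n\|_E^2 \to 0
$$
as $M,N \to \infty$. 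The partial sums $S_N := \sum_{n=1}^N x_n \epsilon_n$ are then Cauchy in $L^2(\boldsymbol{\Omega};F)$, hence converge in $L^2$ and in probability, and It\^o--Nisio's theorem upgrades this to a.s.\ convergence in $F$.

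For (ii) $\Rightarrow$ (i), I would set up a closed-graph argument. Let $\ell^2_E$ denote the Banach space of sequences $(x_n) \subset E$ with $\sum_n \|x_n\|_E^2 < \infty$ and consider the linear map $T \colon \ell^2_E \to L^2(\boldsymbol{\Omega};F)$ defined by $T((x_n)) := \sum_n x_n \epsilon_n$. Hypothesis (ii) provides a.s.\ convergence of the defining series; Kahane's inequality on the equivalence of all $L^p$-moments of a Rademacher sum in a Banach space then yields convergence in $L^2(\boldsymbol{\Omega};F)$, so $T$ is well-defined. Verifying that $T$ has closed graph and invoking the closed-graph theorem gives a constant $C$ with $\mathbb{E}[\|\sum_n x_n \epsilon_n\|_F^2] \leq C \sum_n \|x_n\|_E^2$. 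A standard symmetrization argument (replacing independent mean-zero $X_i$ by $X_i - X_i'$ with an independent copy and then inserting Rademacher signs that leave the law unchanged) extends this estimate to general independent mean-zero $E$-valued random variables, yielding the type II inequality.

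For (iii) $\Rightarrow$ (ii), I would decompose each standard Gaussian as $\eta_n = \epsilon_n |\eta_n|$ with $(\epsilon_n)$ a Rademacher sequence independent of $(|\eta_n|)$ and condition on $(|\eta_n|)$. By (iii) and Fubini, for almost every realization of $(|\eta_n|)$ the series $\sum_n x_n |\eta_n| \epsilon_n$ converges a.s.\ over $(\epsilon_n)$. Since the random scalars $|\eta_n|$ exceed a fixed positive constant with positive probability, applying Kahane's contraction principle (after a suitable truncation and renormalization of the coefficients) transfers this a.s.\ convergence to the unweighted Rademacher series $\sum_n x_n \epsilon_n$, which gives (ii).

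The main obstacle I foresee lies in (ii) $\Rightarrow$ (i): the hypothesis delivers only a.s.\ convergence, whereas type II is a quantitative $L^2$-estimate. So one must simultaneously upgrade the mode of convergence (via Kahane's inequality) and then extend the resulting bound from Rademacher-randomized sequences to arbitrary independent mean-zero families (via symmetrization and randomization). The other two implications are essentially routine once the tools of It\^o--Nisio and the contraction principle are invoked.
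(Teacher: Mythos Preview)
The paper does not prove this proposition; it is stated as a citation from \cite{Naresh.CLT.1976} and used as a black box in the proof of \cref{th:CLT.Holder}. So there is no ``paper's own proof'' to compare against.

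That said, your outline is essentially sound for (i) $\Rightarrow$ (ii) and (ii) $\Rightarrow$ (i). The type~II inequality applied to the symmetric variables $x_n\epsilon_n$ gives the $L^2$-Cauchy property, and It\^o--Nisio upgrades this to a.s.\ convergence; conversely, the closed-graph argument combined with Kahane's equivalence of moments and symmetrization is the standard route and works as you describe.

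For (iii) $\Rightarrow$ (ii), however, your sketch is vague at the crucial point. You write that ``the random scalars $|\eta_n|$ exceed a fixed positive constant with positive probability'' and then appeal to contraction ``after a suitable truncation and renormalization''. The issue is that the contraction principle goes the wrong way here: it lets you pass from $\sum x_n\epsilon_n$ to $\sum a_n x_n\epsilon_n$ when $|a_n|\le 1$, not the reverse. Since $|\eta_n|$ can be arbitrarily small, there is no uniform lower bound to exploit, and it is not clear what truncation you have in mind. A clean argument is the following: the map $(a_n)\mapsto \mathbb{E}_\epsilon\bigl\|\sum_n a_n x_n\epsilon_n\bigr\|_F$ is convex, so by Jensen's inequality (conditioning on $(|\eta_n|)$ and using $\mathbb{E}|\eta_1|=\sqrt{2/\pi}$),
\[
\mathbb{E}\Bigl\|\sum_{n=M}^N x_n\eta_n\Bigr\|_F \;=\; \mathbb{E}_{|\eta|}\,\mathbb{E}_\epsilon\Bigl\|\sum_{n=M}^N |\eta_n|\,x_n\epsilon_n\Bigr\|_F \;\ge\; \sqrt{2/\pi}\,\mathbb{E}_\epsilon\Bigl\|\sum_{n=M}^N x_n\epsilon_n\Bigr\|_F.
\]
Since a.s.\ convergence of the Gaussian series implies $L^1$-convergence (Kahane's inequality for Gaussian sums), the right-hand side is Cauchy, and It\^o--Nisio finishes the job. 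You should replace the truncation sketch by this inequality.
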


We can now state and prove the desired central limit theorem in $\mathcal{C}^{0,\beta}(\Omega)$.

\begin{theorem}\label{th:CLT.Holder}
    Let $\beta\in [0,1)$ and let $\Omega$ be a compact subset of $\mathbb{R}^d$. Then the pair $(\mathcal{C}^{0,1}(\Omega),\mathcal{C}^{0,\beta}(\Omega))$ is of type II. As a consequence, any i.i.d.\ sequence of random functions $\{X_n\}_n $ in $\mathcal{C}^{0,1}(\Omega)$  with $\mathbb{E}[\|X_1\|^2_{0,1}]<\infty$ and $\mathbb{E}[X_1]=0$ satisfies the central limit theorem in $\mathcal{C}^{0,\beta}(\Omega)$; i.e., there exists a centered Gaussian random variable $ \mathbf{G}\in \mathcal{C}^{0,\beta}(\Omega)$ such that 
    $$ \sqrt{n}\left( \frac{1}{n}\sum_{i=1}^n X_i\right)\xrightarrow{w} \mathbf{G} \quad\text{in}\quad \mathcal{C}^{0,\beta}(\Omega).$$
    Note that, necessarily, $\mathbb{E}[\mathbf{G}(x)\mathbf{G}(x')]=\mathbb{E}[X_1(x)X_1(x')]$ for all $x,x'\in \Omega$.
\end{theorem}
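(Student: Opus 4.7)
The plan is to verify the sufficient condition (iii) of \cref{pr:type2.sufficient} with $E = \mathcal{C}^{0,1}(\Omega)$ and $F = \mathcal{C}^{0,\beta}(\Omega)$; once the pair is shown to be of type~II, the CLT follows immediately from \cref{pr:type2.CLT} (applied to the i.i.d.\ sequence in $E$), and the covariance formula is read off from the linear functionals $\delta_x : f \mapsto f(x) \in E'$. So the task reduces to the following: given $\{f_n\} \subset \mathcal{C}^{0,1}(\Omega)$ with $\sum_n \|f_n\|_{0,1}^2 < \infty$ and i.i.d.\ standard Gaussians $\{\eta_n\}$, show that the tails
\[
R_N(x) := \sum_{n > N} f_n(x) \eta_n
\]
tend to zero a.s.\ in $\mathcal{C}^{0,\beta}(\Omega)$ as $N \to \infty$.

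My strategy would be first to get convergence in probability in $\mathcal{C}^{0,\beta}(\Omega)$ via expectation bounds, then upgrade to a.s.\ convergence using It\^o--Nisio (valid since the summands $f_n \eta_n$ are independent and symmetric). The process $R_N$ is centered Gaussian with
\[
\mathbb{E}\big[(R_N(x) - R_N(y))^2\big] = \sum_{n>N} (f_n(x) - f_n(y))^2 \leq \sigma_N^2 |x-y|^2, \qquad \mathbb{E}[R_N(x)^2] \leq \epsilon_N^2,
\]
where $\sigma_N^2 := \sum_{n>N} [f_n]_1^2$ and $\epsilon_N^2 := \sum_{n>N} \|f_n\|_\infty^2$ both tend to $0$. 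Applying Dudley's entropy integral to the subgaussian process on the bounded $\Omega \subset \mathbb{R}^d$ (whose intrinsic metric is dominated by $\sigma_N \|\cdot\|$) yields $\mathbb{E}[\|R_N\|_\infty] \leq C_\Omega (\epsilon_N + \sigma_N) \to 0$.

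For the H\"older seminorm $[R_N]_\beta$ the natural approach is Kolmogorov--Chentsov (equivalently, Garsia--Rodemich--Rumsey) applied with high moments. Gaussianity gives $\mathbb{E}[|R_N(x) - R_N(y)|^p] \leq c_p \sigma_N^p |x-y|^p$ for every $p \geq 1$; choosing $p > d/(1-\beta)$ so that $p = d + p\beta'$ for some $\beta' \in (\beta, 1)$ yields a constant $C_{\beta,d,\Omega,p}$ with $\mathbb{E}\!\left[[R_N]_\beta^p\right] \leq C_{\beta,d,\Omega,p}\, \sigma_N^p$, hence $\mathbb{E}[[R_N]_\beta] \leq C' \sigma_N \to 0$. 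Combining, $\mathbb{E}[\|R_N\|_{0,\beta}] \to 0$, so $R_N \to 0$ in probability in $\mathcal{C}^{0,\beta}(\Omega)$; It\^o--Nisio then gives $R_N \to 0$ almost surely, which is exactly condition (iii).

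The step I expect to be the main technical one is the H\"older-seminorm bound for $[R_N]_\beta$: unlike the sup-norm bound, a naive interpolation $[f]_\beta \lesssim \|f\|_\infty^{1-\beta} [f]_1^\beta$ is useless here because $[R_N]_1 = \infty$ a.s., so we genuinely need a chaining/moment argument tailored to the H\"older scale, and we must ensure that the constants depend only on $(\beta, d, \Omega)$, not on the particular sequence $\{f_n\}$. The hypothesis $\beta < 1$ enters precisely here (it is what allows us to pick $p$ large enough for Kolmogorov to hit the exponent $\beta$); this is also why the statement excludes $\beta = 1$. A minor caveat is separability of $\mathcal{C}^{0,\beta}(\Omega)$ for It\^o--Nisio, which is handled by observing that the Gaussian sum takes values in the separable closure of $\{f_n\}$ in $\mathcal{C}^{0,\beta}$, where the classical It\^o--Nisio theorem applies without change.
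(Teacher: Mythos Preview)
Your proposal is correct and follows essentially the same route as the paper: verify condition~(iii) of \cref{pr:type2.sufficient} via a Kolmogorov--Chentsov moment estimate (using $\beta<1$ to choose $p>d/(1-\beta)$ so that the resulting H\"older exponent exceeds~$\beta$) and then upgrade to a.s.\ convergence via L\'evy--It\^o--Nisio. The paper's variant works with uniform-in-$N$ bounds on the partial sums $\mathbb{G}_N$ (tightness in $\mathcal{C}^{0,\beta'}$ for some $\beta'>\beta$) rather than expectation bounds on the tails, and obtains the moment inequality via Marcinkiewicz--Zygmund and Jensen rather than directly from Gaussianity, but the mechanism is the same; your only sloppiness is defining $R_N$ as an a priori infinite sum---replacing it by the finite differences $S_M-S_N$ makes the Cauchy-in-probability argument clean without changing any estimate.
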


\begin{proof}
    Let $\{f_n\}_{n}\subset \mathcal{C}^{0,1}(\Omega)$ be a sequence of functions with
$\sum_{n=1}^\infty \|f_n\|_{0,1}^2 <\infty $
and let $ \{\eta_n\}_{n \in \mathbb{N}}$ be an i.i.d.\ sequence of (scalar) standard Gaussian random variables. In view of \cref{pr:type2.CLT} and \cref{pr:type2.sufficient}, our goal is to show that the partial sum $\sum_{n=1}^N  f_n \eta_n $ converges a.s.\ in $\mathcal{C}^{0,\beta}(\Omega)$ to some limit $\mathbb{G}$.

Note that for every $x\in \Omega$, the partial sum
$\mathbb{G}_N(x):=\sum_{n=1}^N \eta_n f_n(x)$
converges a.s.\ by Kolmogorov's Three Series Theorem, hence we can define
$\mathbb{G}(x)$ as the a.s.\ limit $\sum_{n=1}^\infty \eta_n f_n(x) $. Thus $\mathbb{G}$ is a scalar, centered Gaussian process indexed by $x\in\Omega$. 
To show that the a.s.\ convergence $\mathbb{G}_N\to \mathbb{G}$ also holds in the norm of $\mathcal{C}^{0,\beta}(\Omega)$, it suffices to show weak convergence in $\mathcal{C}^{0,\beta}(\Omega)$, because by the Lévy--Itô--Nisio theorem \cite[Theorem~2.4]{Ledoux.Talagrand.1991.Book}, these two convergences are equivalent the for partial sum $\mathbb{G}_N=\sum_{n=1}^N \eta_n f_n$ of the symmetric i.i.d.\ random variables $\{\eta_n f_n \}_{n}$. In view of the pointwise convergence, weak convergence is implied by relative compactness (for the weak convergence topology), hence by Prokhorov's theorem \cite[Theorem~2.1]{Ledoux.Talagrand.1991.Book} it suffices to check that given $\delta\in (0,1)$ there exists a compact  $\mathcal{K}\subset\mathcal{C}^{0,\beta}(\Omega)  $ such that
\begin{align}\label{eq:Kolmog.need}
\mathbb{P}( \mathbb{G}_N \in \mathcal{K}) \geq 1-\delta \quad \text{for all $N\geq1$}.
\end{align}
Since $\mathcal{C}^{0,\beta'}(\Omega)$ is compactly embedded in $\mathcal{C}^{0,\beta}(\Omega)$ for $\beta'>\beta$ (e.g., \cite[Lemma~6.33]{GilbargTrudinger.83}), it is enough to show that 
$$ \sup_{N\geq1} \mathbb{E}[ \|\mathbb{G}_N\|_{0,\beta'}^\gamma ] < \infty$$
for some $\beta'>\beta$ and $\gamma\geq 1$. 

To that end, we shall use Kolmogorov's regularity theorem. Recall that two processes $X(x)$ and $\tilde X(x)$ indexed by $x\in \Omega$ are modifications of one another if $\mathbb{P}\{\tilde X(x)=X(x)\}=1$ for all $x\in \Omega$, whereas they are called indistinguishable if $\mathbb{P}\{ \tilde X(x)=X(x)\ \text{for all $x\in \Omega$}\}=1$. Kolmogorov's regularity theorem %
asserts that if $\{G(x)\}_{x \in \Omega}$ is a real-valued stochastic process satisfying  
\begin{align}\label{eq:Kolmog.cond}
  \mathbb{E}[|G(x)-G(y)|^\gamma] \leq C\| x-y\|^{d+\epsilon}    
\end{align}
for some $\gamma\geq1$ and $\epsilon>0$,
then $G$ admits a modification
$\tilde G$ with 
\begin{align}\label{eq:Kolmog.concl}
  \mathbb{E}\left[ \|\tilde G\|_{0,\frac{\epsilon}{\gamma}}^\gamma\right] \leq C_{d,\gamma,\epsilon,C}
\end{align}  
where the constant $C_{d,\gamma,\epsilon,C}$ depends only on $d$, $\gamma$, $\epsilon$ and $C$. In particular, establishing~\eqref{eq:Kolmog.cond} for $G=\mathbb{G}_N$ with constants independent of $N$ implies~\eqref{eq:Kolmog.concl} for a version $\tilde{\mathbb{G}}_N$ with a right-hand side $C_{d,\gamma,\epsilon,C}$ independent of~$N$. Moreover, as we already know that $x\mapsto \mathbb{G}_N(x)$ is continuous, the modification $\tilde{\mathbb{G}}_N$ is necessarily indistinguishable from $\mathbb{G}_N$. In summary, establishing~\eqref{eq:Kolmog.cond} for $G=\mathbb{G}_N$ with constants independent of $N$  implies
\begin{align*}%
  \sup_{N\geq1} \mathbb{E}\left[ \| \mathbb{G}_N\|_{0,\frac{\epsilon}{\gamma}}^\gamma\right] <\infty,
\end{align*}  
which is the desired bound provided that $\beta':=\frac{\epsilon}{\gamma}\in (\beta,1]$.

Fix $\alpha\geq1$ large enough such that setting $\epsilon:=\alpha$ and $\gamma:=d+\alpha$, we have $\beta':=\frac{\epsilon}{\gamma}=\frac{\alpha}{d+\alpha}>\beta$. As explained above, it suffices to show
\begin{align}\label{eq:Kolmog.cond.here}
  \mathbb{E}[|\mathbb{G}_N(x)-\mathbb{G}_N(y)|^\gamma] \leq C\| x-y\|^{\gamma}    
\end{align}
for some $C>0$ independent of $N$, as that will imply \eqref{eq:Kolmog.need} and thus the claim.

By the Marcinkiewicz--Zygmund inequality (e.g., \cite[Theorem~10.3.2]{Chow.1997.Springer}) there exists a constant $C>0$ independent of $N$ such that 
\begin{align*}
    \mathbb{E}[|\mathbb{G}_N(x)-\mathbb{G}_N(y)|^\gamma]&= \mathbb{E}\left[\left|\sum_{n=1}^N \eta_n (f_n(x)-f_n(y))  \right|^\gamma\right]\\  &\leq C \mathbb{E}\left[\left|\sum_{n=1}^N (\eta_n (f_n(x)-f_n(y)))^2  \right|^{\frac{\gamma}{2}}\right]\\
    &\leq C \|x-y\|^{\gamma}  \mathbb{E}\left[\left|\sum_{n=1}^N (\eta_n)^2 \|f_n\|_{0,1}^2  \right|^{\frac{\gamma}{2}}\right].
\end{align*}
We may assume  without loss of generality that $s_N:=\sum_{n=1}^N \|f_n\|_{0,1}^2> 0$. Noting that $s_N\leq s_\infty <\infty$ by the choice of $\{f_n\}_n$, we deduce
\begin{align*}
    \mathbb{E}[|\mathbb{G}_N(x)-\mathbb{G}_N(y)|^\gamma]&\leq C (s_\infty)^{\frac{\gamma}{2}} \|x-y\|^{\gamma}    \mathbb{E}\left[\left(\frac{1}{s_N}\sum_{n=1}^N (\eta_n)^2 \|f_n\|_{0,1}^2  \right)^{\frac{\gamma}{2}}\right].
\end{align*}
Now Jensen's inequality yields
\begin{align*}
    \mathbb{E}[|\mathbb{G}_N(x)-\mathbb{G}_N(y)|^\gamma]&\leq C(s_\infty)^{\frac{\gamma}{2}} \|x-y\|^{\gamma}   \mathbb{E}\left[\frac{1}{s_N}\sum_{n=1}^N (\eta_n)^\gamma \|f_n\|_{0,1}^2  \right]\\
    &=C (s_\infty)^{\frac{\gamma}{2}} \|x-y\|^{\gamma}  \frac{1}{s_N}\sum_{n=1}^N \mathbb{E}[(\eta_n)^\gamma] \|f_n\|_{0,1}^2 \\
    &=C \mathbb{E}[|\eta_1|^\gamma](s_\infty)^{\frac{\gamma}{2}} \|x-y\|^{\gamma}    \frac{1}{s_N}\sum_{n=1}^N  \|f_n\|_{0,1}^2
    \\
    &=C \mathbb{E}[|\eta_1|^\gamma](s_\infty)^{\frac{\gamma}{2}} \|x-y\|^{\gamma},
\end{align*}
which is the desired estimate~\eqref{eq:Kolmog.cond.here}.
\end{proof}

\section{Omitted Proofs}\label{se:proofsAppendix}
\subsection{Proofs for \cref{se:prelims}}\label{se:proofsPrelims}

In this subsection we prove~\cref{pr:ROTprelims} summarizing background results that were used throughout the text. We first state a lemma recalling the analogue of the $c$-conjugate that is standard in optimal transport. For the setting of regularization by $f$-divergence, this notion was introduced by~\cite{DiMarinoGerolin.20b}. (Note, however, that some of the results in~\cite{DiMarinoGerolin.20b} are flawed because the conjugate of~$\varphi$ was taken over $\mathbb{R}$ instead of~$\mathbb{R}_+$, leading to signed measures instead of couplings.)

\begin{lemma}\label{le:conjugate}
    Let $P,Q$ be probability measures on $\mathbb{R}^d$ with supports $\Omega,\Omega'$. Let $c\in\mathcal{C}(\Omega\times\Omega')$ be bounded and have modulus of continuity~$\rho$. Given any bounded measurable function $g:\Omega'\to\mathbb{R}$, there exists a unique function $f:\Omega\to\mathbb{R}$ such that
    \begin{equation}
        \label{eq:conj}
             \int  \psi'(f(x)+ g(y)-c(x,y)) dQ(y)=1\quad \text{for all }x\in\Omega.
    \end{equation}
    Moreover, $f$ is uniformly continuous with modulus $\rho$ and its oscillation is bounded as
    \begin{equation}
        \label{eq:osc.f}
        \sup_{x\in\Omega} f(x)- \inf_{x\in\Omega} f(x) \leq 2\|c\|_\infty,
    \end{equation}    
    while $\inf_{x, y} \{f(x) + g(y)\} \leq t_0 + \|c\|_\infty$ and $\sup_{x, y} \{f(x) + g(y) \}\geq t_0 - \|c\|_\infty$, where~$t_0$ is defined in \cref{as:divergence}.  Finally, $f$ solves the concave optimization
    \begin{equation*}%
     \sup_{f\in  L^\infty(P)} \, \int \big(f\oplus g - \psi(f\oplus g - c)\big) \,d(P\otimes Q).
    \end{equation*} 
\end{lemma}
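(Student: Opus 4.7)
The plan is to define $f(x)$ pointwise from~\eqref{eq:conj}, then transfer regularity and bounds back to~$f$ through monotonicity. For fixed $x \in \Omega$, I would consider
$$ h_x(t) := \int \psi'(t + g(y) - c(x, y)) \, dQ(y). $$
Since $\psi'$ is continuous and nondecreasing (by convexity of $\psi$), $h_x$ is continuous and nondecreasing. The growth $\psi'(s) \geq s$ for $s \geq C$ gives $h_x(t) \to \infty$ as $t \to \infty$, while strict convexity of $\psi$ on $[t_0 - \delta, \infty)$ together with $\psi'(t_0) = 1$ yields $\psi'(s) < 1$ for all $s < t_0$, so $h_x(t) < 1$ for $t$ sufficiently negative. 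The intermediate value theorem then produces a root $f(x)$. For uniqueness, if $h_x(t_1) = h_x(t_2) = 1$ with $t_1 < t_2$, equality of integrands $Q$-a.e.\ combined with strict monotonicity of $\psi'$ on $[t_0 - \delta, \infty)$ would force $t_2 + g(y) - c(x, y) \leq t_0 - \delta$ for $Q$-a.e.~$y$, whence $h_x(t_1) \leq \psi'(t_0 - \delta) < 1$, a contradiction.

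For continuity of~$f$ I would compare $h_x$ at shifted arguments. If $r > 0$ satisfies $|c(x, y) - c(x', y)| \leq r$ for all $y$, then $f(x) + r + g(y) - c(x', y) \geq f(x) + g(y) - c(x, y)$, so monotonicity of $\psi'$ gives $h_{x'}(f(x) + r) \geq h_x(f(x)) = 1 = h_{x'}(f(x'))$; combined with the uniqueness above, this forces $f(x') \leq f(x) + r$. Taking $r = \rho(\|x - x'\|)$ and exchanging the roles of $x, x'$ yields uniform continuity with modulus~$\rho$, while $r = 2\|c\|_\infty$ yields the oscillation bound~\eqref{eq:osc.f}.

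The $t_0$ bounds follow by contradiction using the same monotonicity. If $\inf_{x, y}\{f(x) + g(y)\} > t_0 + \|c\|_\infty$, then $f(x) + g(y) - c(x, y) > t_0$ everywhere, so strict convexity on $[t_0 - \delta, \infty)$ gives $\psi'(f(x) + g(y) - c(x, y)) > 1$ and hence $h_x(f(x)) > 1$, contradicting~\eqref{eq:conj}; the symmetric argument using $\psi'(s) < 1$ for $s < t_0$ handles $\sup\{f + g\}$. Finally, for the variational statement, for each~$x$ the map $\alpha \mapsto \alpha + \int g(y) \, dQ(y) - \int \psi(\alpha + g(y) - c(x, y)) \, dQ(y)$ is concave with derivative $1 - h_x(\alpha)$, so $f(x)$ is its unique maximizer. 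Integrating this pointwise maximality against $dP(x)$ yields the global optimization claim, and measurability of~$f$ is automatic from its uniform continuity.

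The main obstacle I anticipate is that $\psi$ is only assumed strictly convex on $[t_0 - \delta, \infty)$ rather than globally---for polynomial (Tsallis) divergences $\psi'$ is identically zero on $(-\infty, 0]$, so naive global strict-monotonicity arguments fail. The small observation that rescues every step is that local strict convexity at~$t_0$ together with $\psi'(t_0) = 1$ and the global monotonicity of $\psi'$ still force $\psi'(s) < 1$ strictly for every $s < t_0$ and $\psi'(s) > 1$ strictly for every $s > t_0$, which is all that is needed to close each of the contradictions above.
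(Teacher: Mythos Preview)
Your proof is correct and follows essentially the same approach as the paper: intermediate value theorem for existence, the monotonicity comparison $h_{x'}(f(x)+r)\geq h_x(f(x))$ for the modulus of continuity and oscillation bound, and the observation that $\psi'(s)<1$ for $s<t_0$ and $\psi'(s)>1$ for $s>t_0$ to pin down the $t_0$ inequalities. You are in fact slightly more thorough than the paper, which derives uniqueness as the special case $x=\tilde{x}$ of the continuity argument and does not spell out the final variational claim at all; your pointwise concave-maximization argument for that last part is the natural one.
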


\begin{proof}[Proof of \cref{le:conjugate}.]
    As $g$ and $c$ are bounded, $\lim_{s\to\infty} \psi'(s+ g(y)-c(x,y))=\infty$ and $\lim_{s\to0} \psi'(s+ g(y)-c(x,y))<1$ by the properties of $\psi$, where the limits are uniform in $(x,y)$. As $\psi'$ is continuous, the intermediate value theorem yields the existence of $f$ solving~\eqref{eq:conj}. Let $x, \tilde{x} \in \Omega$ and assume without loss of generality that $f(\tilde{x}) \leq f(x)$. As $\psi'$ is nondecreasing, \eqref{eq:conj} yields  
    \begin{align*}
    \int \psi'(f(x) + g(y) - c(x, y))dQ(y) &= 1 = \int \psi'(f(\tilde{x}) + g(y) - c(\tilde{x}, y))dQ(y) \\
    &\leq \int \psi'(f(\tilde{x}) + g(y) - c(x, y) + \rho(\|x - \tilde{x}\|))dQ(y).
    \end{align*}
    As $\psi'$ is, in addition, strictly increasing on $[t_0-\delta, \infty)$, this implies $f(x) \leq f(\tilde{x}) + \rho(\|x-\tilde{x}\|)$, showing that $f$ has modulus of continuity $\rho$. The same argument, applied with $x=\tilde{x}$, also shows that $f(x)$ is uniquely determined by \eqref{eq:conj}, and also that the oscillation of $f$ is bounded by the one of $c$:
    \begin{equation}\label{eq:unifBoundPot}
    \sup_{x} f(x) - \inf_{x} f(x) \leq \sup_{x, y} c(x, y) - \inf_{x, y} c(x, y) \leq 2 \|c\|_\infty.
    \end{equation}
    Furthermore, $\psi'(t_0) = 1$ by \cref{as:divergence}. Hence \eqref{eq:conj} implies that 
    \[
    \inf_{x, y} \{f(x) + g(y) - c(x, y)\} \leq t_0 \leq  \sup_{x, y} \{f(x) + g(y) - c(x, y)\}.
    \]
    Thus $\inf_{x, y} \{f(x) + g(y)\} \leq t_0 + \|c\|_\infty$ and $\sup_{x, y} \{f(x) + g(y) \}\geq t_0 - \|c\|_\infty$. 
\end{proof}

\begin{proof}[Proof of \cref{pr:ROTprelims}.] (i) We first show $\ROT(P,Q) \geq \DUAL(P,Q)$, the so-called weak duality. The definition of $\psi$ implies that $$\psi(f\oplus g - c) \geq \frac{d\pi}{d(P\otimes Q)}\cdot (f\oplus g-c) -\varphi\left(\frac{d\pi}{d(P\otimes Q)}\right)$$ for any $(f, g) \in L^\infty(P)\times L^\infty(Q)$ and any $\pi \in \Pi(P, Q)$ with $\pi \ll P\otimes Q$. Combining this with $\int f\oplus g \,d\pi = \int f\oplus g \,d(P \otimes Q)$ yields the claimed weak duality. The converse inequality will be shown in (v) below.

    (ii) Primal existence follows directly from the weak compactness of $\Pi(\mu,\nu)$ and the weak lower semi-continuity of the objective.
    
    (iii) To show dual existence, let $(\tilde{f}^n,\tilde{g}^n)_n$ be a maximizing sequence for $\DUAL(P, Q)$. Define $f^n$ as the ``conjugate'' of $\tilde{g}^n$ as provided by \cref{le:conjugate}, and let similarly $g^n$ be the conjugate of $f^n$ (as provided by the symmetric analogue of \cref{le:conjugate} with interchanged roles of the marginals). By \cref{le:conjugate}, we see that $(f^n,g^n)$ improves upon $(\tilde{f}^n,\tilde{g}^n)$, so that it is again a maximizing sequence. Moreover, $f_n$ and $g_n$ are $\rho$-continuous. And as~\eqref{eq:osc.f} holds for both $f$ and $g$, the inequalities below~\eqref{eq:osc.f} yield the uniform bound
    $\|f_n \oplus g_n\|_\infty \leq t_0 + 5 \|c\|_\infty$. 
    We can shift $(f_n,g_n)$ by a constant such that $f_n(x_0)=0$ at some reference point $x_0\in\Omega$. It follows from \cref{le:conjugate} that $f_n$ and $g_n$ are uniformly bounded. The Arzel\`{a}--Ascoli theorem then yields a subsequential uniform limit $(f_*,g_*)$. Using the uniform convergence, it is easy to see that $(f_*,g_*)$ maximizes $\DUAL(P, Q)$.

    (iv) If $(f_*,g_*)$ maximizes $\DUAL(P, Q)$, taking the directional derivative in an arbitrary direction $(f,g)\in L^\infty(P)\times L^\infty(Q)$, as well as its negative $(-f,-g)$, yields the first-order condition~\eqref{eq:FOC.as}. Conversely, let $(f_*, g_*)$ solve \eqref{eq:FOC.as}. Note that for any fixed $(x,y)$, the function $ \R \ni  s\mapsto  s- \psi(s-c(x,y))$ is concave with gradient  $1- \psi'(s-c(x,y))$.
        Given any functions $(f,g)\in L^\infty(P)\times L^\infty(Q)$, it follows that for $(x,y) \in \Omega \times \Omega'$,
        \begin{align}
        \begin{split}
            f(x) + &g(y) - \psi\left(f(x)+g(y)-c(x,y)\right) \\
             & \leq    f_*(x) + g_*(y) - \psi\left(f_*(x)+g_*(y)-c(x,y)\right)\\
            &\quad+ \{1- \psi'(f_*(x)+g_*(y)-c(x,y)) (f(x)- f_*(x))\} \\
            &\quad +  \{1- \psi'(f_*(x)+g_*(y)-c(x,y)) (g(y)- g_*(y))\}.\label{eq:convexityOfpsi}
        \end{split} 
        \end{align}
        Integrating \eqref{eq:convexityOfpsi}
        with respect to $P\otimes Q$, the final two lines vanish due to \eqref{eq:FOC.as}, which shows that $(f_*,g_*)$ is an optimizer of \eqref{eq:dual1}.

    (v) We have $\psi' \geq 0$ as $\psi$ is increasing, hence $d\pi := \psi'(f_* \oplus g_* - c) d(P \otimes Q)$ is a non-negative measure. %
        The first equation of \eqref{eq:FOC.as} shows that its first marginal is~$P$ and the second equation of \eqref{eq:FOC.as} shows that its second marginal is~$Q$. Hence $\pi\in\Pi(P,Q)$. Using the equality $\varphi(\psi'(z)) = \psi'(z)z - \psi(z)$ with $z := f_* \oplus g_* - c$, integrating, and rearranging, we find
        \[
        \int \big(f_* \oplus g_* - \psi(f_* \oplus g_* - c)\big) d(P\otimes Q)  = \int c \,d\pi + \int \varphi\left(\frac{d\pi}{d(P\otimes Q)}\right) \,d(P\otimes Q).
        \]
        In view of the weak duality $\ROT(P,Q) \geq \DUAL(P,Q)$ shown at the beginning of the proof, this shows that $\pi$ is an optimizer of $\ROT(P, Q)$ and also completes the proof of the strong duality $\ROT(P,Q) = \DUAL(P,Q)$ stated in~(i).

        (vi) Given any solution $(f_*, g_*)$ of \eqref{eq:FOC.as}, applying \cref{le:conjugate} twice yields versions that solve~\eqref{eq:FOC.pw}.

        If $f_*, g_*$ solve \eqref{eq:FOC.pw}, then they are conjugates of one another, hence \cref{le:conjugate} yields the modulus of continuity. As in the proof of (iii), the uniform bound follows from the inequalities below~\eqref{eq:osc.f} using that~\eqref{eq:osc.f} holds for both $f$ and $g$.

    (vii) %
    Let $C$ be such that $f_*\oplus g_* - c\leq C$. 
    By \cref{as:divergence} there are $t_0>0$ and $\delta,\alpha>0$ such that $\psi'(t_0)=1$, and $\psi'(t)<1$ for $t\leq t_0-\delta$, and $\psi''(t)\geq \alpha$ for $t\in [t_0-\delta,C]$. Let $x\in\Omega$. Consider the sets 
    \begin{align*} 
    A&=\{y\in\Omega': f_*(x)+g_*(y)-c(x,y) < t_0 - \delta\},\\
    B&=\{y\in\Omega': t_0 - \delta \leq f_*(x)+g_*(y)-c(x,y) \leq C \}
    \end{align*}
    and note that $B=\Omega'\setminus A$. Set $p:=Q(A)$. As $\psi'$ is nondecreasing, \eqref{eq:FOC.pw} implies
    \begin{align*}    
    1 = \int \psi'(f_*(\cdot)+ g_*(y)-c(\cdot,y))  dQ(y) \leq p \psi'(t_0-\delta) + (1-p) \psi'(C).
    \end{align*}
    This yields the upper bound $p\leq \frac{\psi'(C)-1}{\psi'(C) - \psi'(t_0-\delta)} < 1$ which is uniform in~$x$.  As $\psi''\geq0$, we deduce the uniform lower bound
    \begin{align*}
      \int \psi''(f_*(\cdot)+ g_*(y)-c(\cdot,y))  dQ(y)
      \geq \alpha Q(B) = (1-p) \alpha.
     \end{align*}
    The second claim is shown analogously.
\end{proof}

\subsection{Proofs for \cref{se:linearization}}\label{se:proofsLinearization}

In this subsection we prove the auxiliary \cref{Lemma:aBallInside} on the sections of the set $S=\{(x,y)\in\Omega\times\Omega':\psi''(\xi_*(x,y))> 0\}$.

\begin{proof}[Proof of \cref{Lemma:aBallInside}]
Recall from \cref{pr:ROTprelims}\,(vii) that $S_Q(x)\neq\emptyset$ and $S_P(y)\neq\emptyset$ for all $(x,y)\in\Omega\times\Omega'$. This implies that $x\in O(x):=\cup_{y\in S_Q({x})}S_P({y})$ for all $x\in\Omega$. By continuity of $\zeta:=\psi''\circ \xi_*$, each set $S_P({y})$, and then also the union $O(x)$, is relatively open in $\Omega$. That is, 
\begin{equation}\label{eq:connectedProof1}
    \text{for each $x\in\Omega$ there exists $r>0$ such that $(x+r \mathbb{B})\cap\Omega \subset O(x)$.}
\end{equation}
Consider the set
$$
  A = \{(x,z)\in\Omega\times\Omega : z\in \Omega\setminus O(x)\}.
$$
If $z\in O(x)$, then there exists $y$ such that 
$\zeta(x,y)>0$ and $\zeta(z,y)>0$. By continuity of $\zeta$, for $\tilde{x},\tilde{z}$ sufficiently close to $(x,z)$, we still have $\zeta(\tilde{x},y)>0$ and $\zeta(\tilde{z},y)>0$, showing that $\tilde{z}\in O(\tilde{x})$. This proves that $\{(x,z): z\in O(x)\}$ is open and hence that $A$ is closed. Consider also, for each $r>0$, the closed subset
$$
  A_r = A\cap \{(x,z)\in\Omega\times\Omega : z\in (x+r \overline{\mathbb{B}})\}.
$$
In view of the definition of $A$, the fact~\eqref{eq:connectedProof1} translates to $\cap_{r>0} A_r=\emptyset$. Now the finite intersection property of the compact set $\Omega\times\Omega$ yields that $A_r=\emptyset$ for some $r>0$, which was the claim.
\end{proof}

\end{document}